\documentclass[11pt,a4paper,reqno]{amsart}

\usepackage[a4paper,left=2.0cm,right=2.0cm,top=2.5cm,bottom=2.5cm]{geometry}

\usepackage[english]{babel}
\usepackage[T1]{fontenc}
\usepackage{times}
\usepackage{amsmath}
\usepackage{amssymb}
\usepackage{amsthm}
\usepackage{amsfonts}
\usepackage{amsbsy}
\usepackage{graphicx}
\usepackage{xcolor}
\usepackage{mathtools}
\usepackage{enumitem}
\usepackage{cite}
\usepackage[foot]{amsaddr}
\usepackage{url}
\usepackage[hidelinks]{hyperref}

	\usepackage{tikz,tikz-3dplot,pgfplots,subcaption}
	\pgfplotsset{compat=1.17}
	\usetikzlibrary{arrows.meta,backgrounds}
	\usepgfplotslibrary{patchplots,fillbetween}
	\pgfplotsset{%
		layers/standard/.define layer set={%
			background,axis background,axis grid,axis ticks,axis lines,axis tick labels,pre main,main,axis descriptions,axis foreground%
		}{
			grid style={/pgfplots/on layer=axis grid},%
			tick style={/pgfplots/on layer=axis ticks},%
			axis line style={/pgfplots/on layer=axis lines},%
			label style={/pgfplots/on layer=axis descriptions},%
			legend style={/pgfplots/on layer=axis descriptions},%
			title style={/pgfplots/on layer=axis descriptions},%
			colorbar style={/pgfplots/on layer=axis descriptions},%
			ticklabel style={/pgfplots/on layer=axis tick labels},%
			axis background@ style={/pgfplots/on layer=axis background},%
			3d box foreground style={/pgfplots/on layer=axis foreground},%
		},
	}

	\allowdisplaybreaks
	\usepackage{placeins}

	\newtheorem{theorem}{Theorem}[section]
	\newtheorem{assumption}[theorem]{Assumption}
	\newtheorem{remark}[theorem]{Remark}
	\newtheorem{proposition}[theorem]{Proposition}
	\newtheorem{corollary}[theorem]{Corollary}
	
	\newtheorem{example}[theorem]{Example}
	\newtheorem{definition}[theorem]{Definition}
	
	\newtheorem{alg}[theorem]{Algorithm}
	\newenvironment{algorithm}[1]{\goodbreak \noindent\rule{\linewidth}{0.5pt}\vspace{-9pt}~\begin{alg}[#1]~\vspace{-7pt}~\\
			\rule{\linewidth}{0.5pt}~\\}{\vspace{-5pt}~\\
			\rule{\linewidth}{0.5pt}~\end{alg}}

	\newcommand{\N}{\mathbb{N}}
	\newcommand{\R}{\mathbb{R}}

	\newcommand{\U}{\mathbb{U}}

	\newcommand{\bu}{{\boldsymbol{u}}}
	\newcommand{\bv}{{\boldsymbol{v}}}
	\newcommand{\bz}{{\boldsymbol{z}}}
	\newcommand{\bA}{{\boldsymbol{A}}}
	\newcommand{\bU}{{\boldsymbol{U}}}
	\newcommand{\bk}{{\boldsymbol{k}}}
	\newcommand{\bc}{{\boldsymbol{c}}}
	
	\newcommand{\bx}{{\boldsymbol{x}}}
	\newcommand{\by}{{\boldsymbol{y}}}
	\newcommand{\bI}{{\boldsymbol{I}}}
	\newcommand{\I}{{\mathcal{I}}}
	\newcommand{\bN}{{\boldsymbol{N}}}
	\newcommand{\bs}{{\boldsymbol{s}}}
	\newcommand{\supp}{\operatorname{supp}}

	\definecolor{tuc}{RGB}{0,95,80}
	\definecolor{tucmath}{RGB}{161,11,112}
	\definecolor{tucalert}{RGB}{255,153,0}

\title{ANOVA approximation for high-dimensional control systems}

\author{Melanie Kircheis$^1$}\address{$^1$Faculty of Applied Computer Sciences and Biosciences, University of Applied Sciences Mittweida, Germany\\ Mail: \textsc{kircheis@hs-mittweida.de}}
\author{Daniel Potts$^2$}\address{$^2$Faculty of Mathematics, Chemnitz University of Technology, Germany\\ Mail: \textsc{\{potts,manuel.schaller\}@math.tu-chemnitz.de}}
\author{Manuel Schaller$^2$}
\author{Karl Worthmann$^3$}\address{$^3$Optimization-based Control Group, Institute of Mathematics, Technische Universit\"at Ilmenau, Germany\\ Mail: \textsc{karl.worthmann@tu-ilmenau.de}}

\thanks{MK, MS and KW gratefully acknowledge the support of the Deutsche Forschungsgemeinschaft (DFG, German Research Foundation) -- Project-ID 519323897.}

\begin{document}

\begin{abstract}
We study the data-driven approximation of control-affine systems, where the drift and each control vector field exhibit a low-dimensional structure.
While the ANOVA (ANalysis Of VAriance) decomposition is tailored to low-dimensional structures, a straightforward implementation does not exploit control affinity, rendering the approximation computationally expensive. 
We propose a coupled least squares approach for given state-control samples, whose ansatz is affine linear in the control by construction and can be realized by a preprocessing of the ANOVA index sets.
Based on a Marcinkiewicz--Zygmund inequality for the resulting ansatz space, we prove a probabilistic $L_2$-error bound with explicitly given constants.
Separate error control on drift and input vector fields renders the approach flexible to, e.\,g., parameter changes. 
Finally, we numerically illustrate the identification of the coupling structure via global sensitivity indices on discretizations of a nonlinear advection-diffusion equation, the shallow water equations and the viscous Burgers equation.
\end{abstract}

\maketitle
\smallskip
\noindent \textbf{Keywords:} control systems, fast Fourier methods, high-dimensional approximation, data-driven method, ANOVA
\smallskip

\noindent \textbf{Mathematics Subject Classification:} 41A25, 41A63, 65D15, 65Txx, 93C55
\smallskip

		\section{Introduction}

		\noindent Whenever a first-principles model of a dynamical system is unavailable, too complex, or too costly to evaluate, a surrogate model can be learned from measured data. Such a model can then be used for prediction and controller design, e.\,g., within model predictive control~\cite{schimperna2026data}.
		In this work, we consider discrete-time control-affine systems
		\begin{align*}
			\bx^+ = F(\bx) + G(\bx)\bu
		\end{align*}
		with state~$\bx$, successor state~$\bx^+ \in \R^d$, and control input~$\bu\in\R^m$. Such control-affine structures naturally arise, e.\,g., from (time-discretized) mechanical, chemical, or fluid-dynamical models as well as from spatial semi-discretizations of controlled partial differential equations~\cite{BuLe05,HiPiUlUl09}.
		Our approach relies on two structural properties. First, the dynamics are affine linear in the control. Second, as is the case in many applications, each component of~$F$ and~$G$ depends on a few state variables only and couples them through low-order interactions. For instance, in finite-difference or finite-volume discretizations of PDEs, each component couples only with its spatial neighbours.
		Identifying this coupling structure from data reduces the computational effort, and the learned model becomes interpretable and amenable to structure-exploiting controller design.

		The ANOVA (ANalysis Of VAriance) decomposition is tailored to such low-dimensional structures, since it splits high-dimensional functions into terms depending on subsets of the variables only and quantifies the importance of each term by global sensitivity indices.
		For control systems, however, the ANOVA approximation is not directly applicable. On the one hand, $F$ and~$G$ can, in general, not be sampled separately. On the other hand, a naive ANOVA approximation of the joint map~$(\bx,\bu)\mapsto H(\bx,\bu) = F(\bx) + G(\bx)\bu$ would spend considerable
		computational effort on interactions of the control variables which are excluded a priori by the control-affine structure.
		In this paper, we propose an approach which takes the control-affine structure into account and works with given samples~$(\bx^i,\bu^i,H(\bx^i,\bu^i))$, i.\,e., it does not require particular control values to be chosen by the user.	
		Rather, it encodes the control-affine structure into the approximation ansatz itself. To this end, the ANOVA index sets and the basis in the control variables are chosen a priori. This preprocessing step ensures that the ansatz is affine linear in~$\bu$, so that spurious control interactions cannot arise by construction.

		We briefly comment on related work.
		Data-driven identification of dynamical systems under structural assumptions has been studied, for example, using sparse regression approaches such as SINDy~\cite{BrPrKu16} and its extension to control systems~\cite{BrPrKu16c}. In these methods, sparsity is enforced on the coefficient vector over a fixed dictionary. In contrast, our approach identifies sparsity in the \emph{variable couplings}, i.\,e., in the ANOVA terms.
		Koopman-based methods, in particular extended dynamic mode decomposition~\cite{WiKeRo15}, likewise exploit the control-affine structure by means of bilinear surrogate models. This is possible because the Koopman \emph{generator} inherits the control affinity of the dynamics~\cite{PeOtRo20}. For an error analysis of kernel-based EDMD we refer the reader to~\cite{KohnPhil24} and, for an extension to control systems, to~\cite{bold2025kernel}.
		Finally, this work builds upon the ANOVA and high-dimensional model representation (HDMR) literature on sensitivity analysis~\cite{RaAl99,So01} and on the ANOVA approximation approach~\cite{PottSchm21,SchmischkeDiss}.
		To the best of our knowledge, a combination of the ANOVA approximation with the control-affine structure as proposed here has not been considered so far.

		\medskip

		\noindent\textbf{Contribution and outline.}
		In the preparatory Section~\ref{sec:ANOVA}, we briefly recall the ANOVA decomposition, its data-driven implementation and the corresponding error analysis for the Chebyshev basis.
		The contribution of this work is Section~\ref{sec:control}, where we make this machinery applicable to control-affine systems by means of a coupled least squares approach for given state-control samples.
		In Section~\ref{sec:linearity}, we characterize its limit for infinitely many samples as an orthogonal projection in Proposition~\ref{Prop:limit_projection}.
		In Section~\ref{sec:error_coupled}, we establish a Marcinkiewicz--Zygmund inequality for the resulting ansatz space in Proposition~\ref{Prop:MZ}, i.\,e., a norm equivalence between the discrete and the continuous norm which holds uniformly on the ansatz space with high probability. Based on it, we prove a probabilistic $L_2$-error bound with explicit constants in Theorem~\ref{Thm:err_coupled_MZ}, which also yields error bounds for the drift and the input map separately, see Remark~\ref{Remark:no_separation}. Section~\ref{sec:2d} extends these results to multivariate controls, see Corollary~\ref{Cor:err_est_fullanova_m}.
		In Section~\ref{sec:numerics}, we illustrate the approach numerically, before we conclude in Section~\ref{sec:conclusions}.
		Appendix~\ref{sec:complete_func} describes how the coupled approach may be implemented within existing ANOVA software by a suitable preprocessing of the index sets, see Algorithm~\ref{alg:preprocessed}.

		\section{The ANOVA decomposition of scalar-valued functions\label{sec:ANOVA}}

		In this part, we recall the ANOVA decomposition and refer the reader to~\cite{CaMoOw97, RaAl99, LiOw06, KuoSloan10} for a detailed exposition. 
		Let~$\bv \subseteq [d] \coloneqq \{1,\ldots,d\}$ denote a subset of indices.
		For high-dimensional functions~$f\colon \R^d\to\R$, we
		utilize additive decompositions of the form
		\begin{align}
			\label{eq:anova}
			f(\bx) = f_{\varnothing} + \sum_{j=1}^d f_{\{j\}}(x_j) + \sum_{1\leq i < j\leq d} f_{\{i,j\}}(x_i,x_j) + \dots + f_{[d]}(x_1,\dots,x_d) \eqqcolon \sum_{\bv\subseteq [d]} f_\bv(\bx_\bv),
		\end{align}
		where the term~$f_\bv$ corresponding to~$\bv$ only depends on~$x_j$ with~$j\in \bv$, or, equivalently, on~$\bx_\bv \coloneqq (x_j)_{j \in \bv}$ using the inherent ordering of~$[d]$.
		Clearly, every function~$f\colon \R^d\to\R$ may be decomposed in such a way by summing over $2^d$~terms with~$\bv$ in the power set of~$[d]$.
		In this work, however, we are interested in the case where the sum above contains significantly fewer summands and the cardinality~$|\bv|$ of the involved index sets~$\bv$ is small in comparison to the dimension~$d$.
		Such functions are said to have a \emph{low-dimensional structure}.

		\begin{example}\label{ex:einfach}
			For~$d=4$ consider the function
			\begin{align}
				\label{eq:func_example}
				f(\bx) = 1 + x_1^2 + x_1 x_2^4 + x_3 x_4 - \sin(x_4).
			\end{align}
			Clearly, this function can be represented using the five summands
			\begin{align}
				\label{eq:terms_example}
				f_{\varnothing}= 1, \quad\! f_{\{1\}}(x_1) = x_1^2, \quad f_{\{1,2\}}(x_1,x_2) = x_1 x_2^4, \quad f_{\{3,4\}}(x_3,x_4) = x_3 x_4, \quad f_{\{4\}}(x_4) = -\sin(x_4).
			\end{align}
			This yields significantly less summands than the number~$2^4 = 16$ of possible terms, which corresponds to the cardinality of the power set of~$[4] = \{1,2,3,4\}$, since each subset~$\bv$ may contribute a term~$f_\bv$ in~\eqref{eq:anova}.
			Moreover, in \eqref{eq:func_example}, each non-trivial term~$f_\bv$, $\bv \subseteq [d]$, depends on two arguments at most.
		\end{example}

		The representation~\eqref{eq:anova} is not unique in general.
		It becomes unique if one additionally requires the terms~$f_\bv$ to have mean zero and to be pairwise $L_2$-orthogonal, see, e.\,g.,~\cite[Theorem~2.1 and Example~2.2]{KuoSloan10}. This is the ANOVA decomposition defined below.
		These conditions characterize the~$f_\bv$ through $L_2$ inner products over the domain~$\Omega \subseteq \mathbb{R}^d$, i.\,e., through integrals that are however inaccessible when~$f$ is only known through pointwise samples.
		In Section~\ref{sec:ANOVA_implementation}, we therefore replace them by a data-driven least squares approximation over a subspace~$\operatorname{span}\{\varphi_1,\ldots,\varphi_N\}$ spanned by linearly independent functions.

		\subsection{Definition}

		In this part, we gather the notational conventions needed for the definition of the ANOVA decomposition.
		For a bounded set~$\Omega \subset \R^d$ and a probability density~$\omega \colon \Omega\to(0,\infty)$ we write~$L_2(\Omega,\omega)$ for the weighted Lebesgue space of real-valued functions with the inner product~$\langle f,g \rangle_{L_2(\Omega,\omega)} \coloneqq \int_\Omega f g \,\omega \,\mathrm d\bx$ and the induced norm.
		We consider tensor product domains and densities
		\begin{align}
			\label{eq:tensor_density}
			\Omega \coloneqq \bigtimes_{j=1}^d \Omega_j, \quad  \omega(\bx) \coloneqq \prod_{j=1}^d \omega_j(x_j), \quad \int_{\Omega_j} \omega_j(x_j)\,\mathrm dx_j = 1, \quad j=1,\dots,d,
		\end{align}
		and write~$\Omega_\bv \coloneqq \bigtimes_{j\in\bv} \Omega_j$ and~$\omega_\bv(\bx_\bv) \coloneqq \prod_{j\in\bv} \omega_j(x_j)$ for~$\bv\subseteq[d]$.
		By separability, $L_2(\Omega,\omega)$ has an orthonormal tensor product basis~$\varphi_{\bk} \coloneqq \prod_{j=1}^d \varphi_{k_j}(x_j)$, $\bk\in\N_0^d$, built from orthonormal bases of the~$L_2(\Omega_j,\omega_j)$. Hence, every~$f\in L_2(\Omega,\omega)$ admits the expansion
		\begin{align}
			\label{eq:basis_expansion_f}
			f(\bx) = \sum_{\bk\in\N_0^d} c_\bk(f) \,\varphi_\bk(\bx), \qquad
			c_\bk(f) \coloneqq \langle f,\varphi_\bk \rangle_{L_2(\Omega,\omega)},
		\end{align}
		where we may assume~$\varphi_{\boldsymbol 0} \equiv 1$ without loss of generality.
		In the error analysis and in the numerical experiments, we work with the \emph{Chebyshev basis}\footnote{A general interval~$[a,b]$ is mapped to~$[-1,1]$ via~$x\mapsto\frac{2(x-a)}{b-a}-1$, so restricting to~$\Omega_j=[-1,1]$ in~\eqref{eq:basis_cheb} is no loss of generality.}
		\begin{align}
			\label{eq:basis_cheb}
			\begin{split}
				\varphi^{\mathrm{cheb}}_{k_j}(x_j) \coloneqq \sqrt{2}^{\min\{1,k_j\}} \cos(k_j\arccos(x_j)), \ k_j\in\N_0, \\\quad \text{with } \ \Omega_j = [-1,1], \ \omega^{\mathrm{cheb}}_j(x_j) = \frac{1}{\pi\textstyle\sqrt{\smash[b]{1-x_j^2}}}, \quad j\in[d]. 
			\end{split}
		\end{align}
		With these conventions in place, we can now state the ANOVA decomposition. We follow~\cite{KuoSloan10}, where it is derived together with the properties collected below.

		\begin{definition}\label{Def:anova}
			For a function~$f\in L_2(\Omega,\omega)$ and a subset of indices~$\bv \subseteq [d]$, the \emph{ANOVA (ANalysis Of VAriance) terms} are recursively defined by
			\begin{align}
				\label{eq:anova_terms}
				f_{\varnothing} \coloneqq \int_{\Omega} f(\bx) \,\omega(\bx) \,\mathrm{d}\bx, \qquad
				f_{\bv} (\bx_\bv) \coloneqq \int_{\Omega_{\bv^c}} f(\bx) \,\omega_{\bv^c}(\bx_{\bv^c}) \,\mathrm{d}\bx_{\bv^c} - \sum_{\bz\subsetneq \bv} f_{\bz}(\bx_\bz),
			\end{align}
			where the complement of~$\bv$ is defined as~$\bv^c \coloneqq [d]\setminus\bv$.
			The corresponding expansion of the form~\eqref{eq:anova} is called the \emph{ANOVA decomposition} and it fulfills the zero-mean property
			\begin{align}
				\label{eq:zero_mean} 
				\int_{\Omega_\bv} f_{\bv} (\bx_\bv) \,\omega_{\bv} (\bx_\bv) \,\mathrm{d}\bx_\bv = 0, \quad \bv\neq\varnothing,
			\end{align}
			the annihilation property
			\begin{align*}
				\int_{\Omega_j} f_{\bv} (\bx_\bv) \,\omega_j(x_j) \,\mathrm{d}x_j = 0, \quad j\in\bv\neq\varnothing,
			\end{align*}
			and thus $L_2(\Omega,\omega)$-orthogonality
			\begin{align}
				\label{eq:orthogonality}
				\langle f_{\bv}, f_{\bz}\rangle_{L_2(\Omega,\omega)} = 0, \quad \bv\neq\bz\subseteq[d].
			\end{align}
		\end{definition}

		The following example illustrates that the ANOVA terms need not coincide with the naive representation of a function.
		\begin{example}\label{ex:analytic}
			We illustrate Definition~\ref{Def:anova} by computing the ANOVA terms of the function~\eqref{eq:func_example} from Example~\ref{ex:einfach} in the weighted Chebyshev space~$L_2([-1,1]^4,\omega^{\mathrm{cheb}})$.
			Evaluating~\eqref{eq:anova_terms} yields the constant term~$f_{\varnothing} = \frac 32$, the one-dimensional terms
			\begin{align*}
				f_{{\{1\}}}(x_1) = x_1^2+\tfrac{3}{8}x_1-\tfrac{1}{2},
				\qquad f_{{\{4\}}}(x_4) = -\sin(x_4),
				\qquad f_{{\{2\}}} = f_{{\{3\}}} = 0,
			\end{align*}
			and the two-dimensional terms
			\begin{align*}
				f_{{\{1,2\}}}(x_1,x_2) = x_1x_2^4-\tfrac{3}{8}x_1,
				\qquad f_{{\{3,4\}}}(x_3,x_4) = x_3x_4.
			\end{align*}
			All remaining pair terms as well as all terms of order three and four vanish.
			Hence, the decomposition is of the expected form
			\begin{align}
				\label{eq:structure_example}
				f(\bx) = f_{\varnothing} + f_{\{1\}}(x_1) + f_{\{1,2\}}(x_1,x_2) + f_{\{3,4\}}(x_3,x_4) + f_{\{4\}}(x_4).
			\end{align}
			Note, however, that the ANOVA terms do not coincide with the representation~\eqref{eq:terms_example}. The zero-mean and orthogonality constraints enforce additional lower-order corrections such as the summand~$\frac38 x_1$ appearing in~$f_{\{1\}}$ and~$f_{\{1,2\}}$.
			Thus, even if a function has an obvious low-dimensional structure, the number of ANOVA terms required to represent it is not known a priori.
		\end{example}

		Using the basis expansion~\eqref{eq:basis_expansion_f}, the ANOVA terms~\eqref{eq:anova_terms} may also be written as
		\begin{align}
			\label{eq:basis_expansion_fv}
			f_\bv(\bx_\bv) = \sum_{\substack{\bk\in\N_0^d\\\supp\bk=\bv}} c_\bk(f) \,\varphi_\bk(\bx_\bv),
			\quad \bv\subseteq[d],
		\end{align}
		where~$\supp\bk \coloneqq \{j\in[d] \colon k_j \neq 0\}$, cf.~\cite{PottSchm19a}.
		Note that for~$\supp\bk=\bv$ the tensor-product basis function~\mbox{$\varphi_\bk(\bx)=\prod_{j=1}^d\varphi_{k_j}(x_j)$} satisfies~$\varphi_{k_j}\equiv 1$ for all~$j\notin\bv$, since~$\varphi_0\equiv 1$. Therefore, $\varphi_\bk$ depends only on~$\bx_\bv$, which justifies writing~$\varphi_\bk(\bx_\bv)$. The same convention is used in Appendix~\ref{sec:complete_func} for indices with a non-zero control component.
		In other words, the representation~\eqref{eq:basis_expansion_fv} exhibits the ANOVA decomposition as a partition of the space of basis coefficients~$c_\bk(f)$.
		Therein, the term for~$\bv=\varnothing$ collects the single coefficient~$c_{\boldsymbol 0}(f)$, the term for a singleton~$\bv=\{j\}$ collects all coefficients on the respective axis except the $0$-th one, and so on. Figure~\ref{fig:anova_decomposition} illustrates this partition for the index set~$\N_0^d$ underlying the bases considered here.
		Every coefficient of~\eqref{eq:basis_expansion_f} is thus attributed to exactly one ANOVA term.
		\begin{figure}[htb]
			\centering
\tdplotsetmaincoords{70}{120}
\begin{minipage}[b]{0.16\textwidth}
    \centering
    \begin{tikzpicture}[scale=1,tdplot_main_coords]
        \draw[-,opacity = 0.4] (0,0,0) -- (1,0,0);
        \draw[-,opacity = 0.4] (0,0,0) -- (0,1,0);
        \draw[-,opacity = 0.4] (0,0,0) -- (0,0,1);
        \pgfmathsetmacro\a{3}
        \foreach \x in {0,...,\a}{
            \foreach \y in {0,...,\a}{
                \foreach \z in {0,...,\a}{
                \pgfmathsetmacro\yi{\y/(\a+1)}
                \pgfmathsetmacro\xi{\x/(\a+1)}
                \pgfmathsetmacro\zi{\z/(\a+1)}
                \shade[ball color = tucmath!70, opacity = 0.8] (\xi,\yi,\zi) circle (2pt);
        }}}
    \end{tikzpicture}
    $$\bk \in \N_0^3$$
\end{minipage}%
\begin{minipage}[t]{0.032\textwidth}
    \vspace*{-44pt}
    $=$
\end{minipage}%
\begin{minipage}[b]{0.16\textwidth}
    \centering
    \begin{tikzpicture}[scale=1,tdplot_main_coords]
        \draw[-,opacity = 0.4] (0,0,0) -- (1,0,0);
        \draw[-,opacity = 0.4] (0,0,0) -- (0,1,0);
        \draw[-,opacity = 0.4] (0,0,0) -- (0,0,1);
        \shade[ball color = tuc!40, opacity = 0.8] (0,0,0) circle (2pt);
    \end{tikzpicture}\\
    \vspace*{-10pt}%
    $$\bk = \boldsymbol 0 $$
\end{minipage}%
\begin{minipage}[t]{0.032\textwidth}
    \vspace*{-44pt}
    $+$
\end{minipage}%
\begin{minipage}[b]{0.16\textwidth}
    \centering
    \begin{tikzpicture}[scale=1,tdplot_main_coords]
        \draw[-,opacity = 0.4] (0,0,0) -- (1,0,0);
        \draw[-,opacity = 0.4] (0,0,0) -- (0,1,0);
        \draw[-,opacity = 0.4] (0,0,0) -- (0,0,1);
        \pgfmathsetmacro\a{3}
        \foreach \y in {1,...,\a}{
            \pgfmathsetmacro\yi{\y/(\a+1)}
            \shade[ball color = tuc!70, opacity = 0.8] (0,\yi,0) circle (2pt);
            \shade[ball color = tucmath!70, opacity = 0.8] (\yi,0,0) circle (2pt);
            \shade[ball color = tucalert!70, opacity = 0.8] (0,0,\yi) circle (2pt);
        }
    \end{tikzpicture}\\
    \vspace*{-10pt}%
    $$|\supp \bk|  = 1$$
\end{minipage}%
\begin{minipage}[t]{0.032\textwidth}
    \vspace*{-44pt}
    $+$
\end{minipage}%
\begin{minipage}[b]{0.16\textwidth}
    \centering
    \begin{tikzpicture}[scale=1,tdplot_main_coords]
        \draw[-,opacity = 0.4] (0,0,0) -- (1,0,0);
        \draw[-,opacity = 0.4] (0,0,0) -- (0,1,0);
        \draw[-,opacity = 0.4] (0,0,0) -- (0,0,1);
        \pgfmathsetmacro\a{3}
        \foreach \x in {1,...,\a}{
            \foreach \y in {1,...,\a}{
                \pgfmathsetmacro\yi{\y/(\a+1)}
                \pgfmathsetmacro\xi{\x/(\a+1)}
                \shade[ball color = tuc!70, opacity = 0.8] (\xi,\yi,0) circle (2pt);
                \shade[ball color = tucmath!70, opacity = 0.8] (\xi,0,\yi) circle (2pt);
                \shade[ball color = tucalert!70, opacity = 0.8] (0,\xi,\yi) circle (2pt);
        }}
        \draw[fill=tucalert, semitransparent] (0,0,0) -- (0,1,0) -- (0,1,1) -- (0,0,1)--cycle;
        \draw[fill=tucmath, semitransparent] (0,0,0) -- (1,0,0) -- (1,0,1) -- (0,0,1)--cycle;
        \draw[fill=tuc, semitransparent] (0,0,0) -- (1,0,0) -- (1,1,0) -- (0,1,0)--cycle;
    \end{tikzpicture}
    $$|\supp \bk|  = 2$$
\end{minipage}%
\begin{minipage}[t]{0.032\textwidth}
    \vspace*{-44pt}
    $+$
\end{minipage}%
\begin{minipage}[b]{0.16\textwidth}
    \centering
    \begin{tikzpicture}[scale=1,tdplot_main_coords]
    \draw[-,opacity = 0.4] (0,0,0) -- (1,0,0);
    \draw[-,opacity = 0.4] (0,0,0) -- (0,1,0);
    \draw[-,opacity = 0.4] (0,0,0) -- (0,0,1);
    \pgfmathsetmacro\a{3}
    \foreach \x in {1,...,\a}{
        \foreach \y in {1,...,\a}{
            \foreach \z in {1,...,\a}{
            \pgfmathsetmacro\yi{\y/(\a+1)}
            \pgfmathsetmacro\xi{\x/(\a+1)}
            \pgfmathsetmacro\zi{\z/(\a+1)}
            \shade[ball color = tucmath!70, opacity = 0.8] (\xi,\yi,\zi) circle (2pt);
    }}}
    \end{tikzpicture}
    $$|\supp \bk|  = 3$$
\end{minipage}
			\caption{Visualization of the partition in the space of basis coefficients, as provided by the ANOVA decomposition, for a 3-dimensional function in the setting~$\bk\in\N_0^d$ underlying the Chebyshev basis~\eqref{eq:basis_cheb}.}\label{fig:anova_decomposition}
		\end{figure}

		The ANOVA decomposition also quantifies how much an individual term contributes to~$f$, which will later allow us to discard the irrelevant ones.
		With the variance of~$f$ and of an ANOVA term,
		\begin{align}
			\label{eq:variance_f}
			\sigma^2(f) \coloneqq \|f\|_{L_2(\Omega,\omega)}^2 - f_{\varnothing}^2,
			\qquad
			\sigma^2(f_\bv) \coloneqq \|f_{\bv}\|_{L_2(\Omega_\bv,\omega_\bv)}^2, \quad \bv \subseteq [d],
		\end{align}
		the zero-mean property~\eqref{eq:zero_mean} and orthogonality~\eqref{eq:orthogonality} yield the variance decomposition~$\sigma^2(f) = \sum_{\varnothing\neq\bv \subseteq [d]} \sigma^2(f_\bv)$, which explains the name ``analysis of variance''.
		The share of the variance explained by the interaction of the variables indexed by~$\bv$ is measured by the \emph{global sensitivity indices}, or \emph{Sobol indices},
		\begin{align}
			\label{eq:gsis}
			S_\bv(f) \coloneqq \frac{\sigma^2(f_\bv)}{\sigma^2(f)}
			= \frac{\sum_{\bk\in\N_0^d,\,\supp\bk=\bv} |c_\bk(f)|^2}{\sum_{\bk\in\N_0^d,\,\bk\neq\boldsymbol 0} |c_\bk(f)|^2} \in [0,1], \quad \varnothing\neq\bv \subseteq [d],
		\end{align}
		see~\cite{So01}, where the second expression follows from the basis representations~\eqref{eq:basis_expansion_f} and~\eqref{eq:basis_expansion_fv}.

		\subsection{Efficient and data-driven implementation\label{sec:ANOVA_implementation}}

		Although~\eqref{eq:anova} carries valuable information about a function, its defining relations~\eqref{eq:anova_terms} are not amenable to computation. On the one hand, the number of terms grows exponentially in the dimension~$d$. On the other hand, each term is defined through integrals, which are inaccessible when~$f$ is only known through samples.
		To enable an efficient and purely data-driven implementation, we therefore replace these relations by a chain of approximations, cf.~\cite{PottSchm21}.
		We first \emph{truncate} the decomposition to its relevant low-dimensional terms, then \emph{project} each term onto a finite basis, \emph{estimate} the resulting coefficients from samples, and finally \emph{post-process} the approximation to quantify the importance of the individual ANOVA terms.
		These four steps are detailed in the following.

		\subsubsection{Truncation}
		Since it is unknown a priori which of the~$2^d$ terms in~\eqref{eq:anova} are relevant, we first truncate the decomposition, that is, we remove terms~$f_\bv$.
		To this end, we invoke the sparsity-of-effects principle~\cite{Montgomery17}, according to which many phenomena are well described by a few low-order interactions.
		More precisely, we introduce the \emph{superposition dimension}~$q\in\N$, $q\ll d$, and truncate~\eqref{eq:anova} as
		\begin{align}
			\label{eq:anova_superposition}
			f(\bx) \approx \sum_{|\bv|\leq q} f_\bv(\bx_\bv).
		\end{align}
		In many real-world problems the superposition dimension is indeed low, i.\,e., small choices~$q\in\{2,3\}$ already suffice for a good approximation, see~\cite{CaMoOw97,KuoSloan10,DeVore11,PottSchm19a}.

		Now that we have reduced the number of ANOVA terms to be determined, we turn to their actual computation.
		Both representations~\eqref{eq:anova_terms} and~\eqref{eq:basis_expansion_fv} involve integrals and are thus not directly amenable to computation. We build on the latter and render it computable in two steps, the first of which is the following projection.

		\subsubsection{Projection}
		The representation~\eqref{eq:basis_expansion_fv} of an ANOVA term still involves infinitely many basis coefficients.
		Therefore, we introduce another truncation step, that is, for every subset~$\bv\subseteq[d]$ we define the index set
		\begin{align}
			\label{eq:indexset_Nv}
			\I_{{\bN}^\bv} \coloneqq \left\{ \bk\in\N_0^d \colon \supp\bk=\bv,\, \bk_\bv \in \bigtimes_{j=1}^{|\bv|} \left\{1,\dots,N_j^\bv-1\right\} \right\},
			\qquad
			{\bN}^\bv \coloneqq \left(N_j^\bv\right)_{j=1}^{|\bv|} \in (2\N)^{|\bv|}.
		\end{align}
		In other words, for~$\bv=\varnothing$ we keep the single coefficient~$c_{\boldsymbol 0}(f)$ and for all~$\bv\neq\varnothing$ we consider only the first~\mbox{$N_j^\bv-1$} coefficients of~\eqref{eq:basis_expansion_fv} in every direction~$j$ that is present in~$\bv$.
		For instance, when considering~$\bv=\{1,2\}$ with~\mbox{${\bN}^\bv=(6,4)^\top\in(2\N)^2$} for our 4-dimensional example function~\eqref{eq:func_example}, then the index set~$\I_{{\bN}^\bv}$ contains indices~$\bk=(k_1,k_2,0,0)^\top$ with~$k_1\in\{1,2,\dots,5\}$ and~$k_2\in\{1,2,3\}$.
		Hence, the approximation of the ANOVA terms~\eqref{eq:anova_terms} is given by
		\begin{align*}
			f_\bv(\bx_\bv) \approx \sum_{\bk\in\I_{{\bN}^\bv}} c_\bk(f) \,\varphi_\bk(\bx_\bv),
			\quad \bv\subseteq[d].
		\end{align*}
		Using the (disjoint) union
		\begin{align}
			\label{eq:indexset_N}
			\I_{\bN} \coloneqq \bigcup_{|\bv|\leq q} \I_{{\bN}^\bv}
		\end{align}
		of the index sets~\eqref{eq:indexset_Nv} we approximate the function~$f$ in~\eqref{eq:basis_expansion_f} as
		\begin{align}
			\label{eq:basis_expansion_f_N}
			f(\bx) \approx P_\bN f(\bx) \coloneqq \sum_{\bk\in\I_{\bN}} c_\bk(f) \,\varphi_\bk(\bx),
		\end{align} 
		i.\,e., by the orthogonal projection of~$f$ onto the subspace~$\operatorname{span}\{\varphi_{\bk}\}_{\bk\in\I_{\bN}} \subset L_2(\Omega,\omega)$ employing the finite dictionary~$\mathcal D \coloneqq \{\varphi_{\bk}\}_{\bk\in\I_{\bN}}\subset \{\varphi_{\bk}\}_{\bk\in\N_0^d}$.

		\subsubsection{Estimation}
		The above approximation \eqref{eq:basis_expansion_f_N} of the function~$f$ still requires the integrals~$c_\bk(f) = \langle f,\varphi_\bk \rangle_{L_2(\Omega,\omega)}$, $\bk\in\I_{\bN}$, from~\eqref{eq:basis_expansion_f}, which are inaccessible in a purely data-driven setting.
		We therefore keep the ansatz~\eqref{eq:basis_expansion_f_N} and determine its coefficients from data.
		To this end, assume we are given i.i.d.\ sample points~$\{\bx^1,\ldots,\bx^M\}\subset\Omega$, $M\in\N$, drawn according to the density~$\omega$, as well as the corresponding function evaluations~$\{f(\bx^1),\ldots,f(\bx^M)\}\subset\R$.
		Then we may approximate the basis coefficients~$c_\bk(f)$, $\bk\in\I_{\bN}$, in~\eqref{eq:basis_expansion_f_N} by solving the least squares problem
		\begin{align}
			\label{eq:lsqr_problem}
			\min_{\bc\in\R^{|\I_{\bN}|}} \| \bA \bc - \boldsymbol f \|_2^2,
		\end{align}
		where
		\begin{align*}
			\bA \coloneqq \left(\varphi_\bk(\bx^i)\right)_{i=1,\,\bk\in\I_{\bN}}^{M} \in \R^{M\times |\I_{\bN}|},
			\qquad\text{and}\qquad
			\boldsymbol f \coloneqq \left(f(\bx^i)\right)_{i=1}^M \in \R^M.
		\end{align*}
		We compute the solution of~\eqref{eq:lsqr_problem} via the normal equations of the first kind
		\begin{align}\label{eq:normal}
			\bA^\top \bA \bc = \bA^\top \boldsymbol f.
		\end{align}
		If~$\bA$ has full column rank~$|\I_{\bN}|$, the matrix~$\bA^\top \bA$ is invertible and the unique solution of the least squares problem~\eqref{eq:lsqr_problem} is given by
		\begin{align}
			\label{eq:coeffs_pinv}
			(\tilde c_\bk)_{\bk\in\I_\bN} = \tilde\bc \coloneqq (\bA^\top \bA)^{-1} \bA^\top \boldsymbol f.
		\end{align}
		Full column rank of~$\bA$ requires the overdetermined setting~$M\geq|\I_{\bN}|$. Moreover, if the~$\bx^i$ are drawn i.i.d.\ according to~$\omega$ and the oversampling factor~$M/|\I_{\bN}|$ is large enough, $\bA$ is well-conditioned with high probability, see~\cite[Theorem~4.2]{SchmischkeDiss} and~\cite{KaUlVo19}.
		In the underdetermined case, regularization techniques are available~\cite{SchmischkeDiss}, which we do not pursue here.
		In high dimensions,~\eqref{eq:coeffs_pinv} is computed iteratively by the LSQR algorithm~\cite{PaSa82}, which never forms~$\bA$ explicitly but only requires multiplication with~$\bA$ and~$\bA^\top$. We use the implementation~\cite{ANOVAapprox}, which provides fast algorithms for these products.

		\subsubsection{Post-processing and sensitivities}
		It remains to assess the importance of the individual ANOVA terms.
		We approximate the global sensitivity indices~\eqref{eq:gsis} using the estimated coefficients~\eqref{eq:coeffs_pinv} and the index sets~\eqref{eq:indexset_Nv},~\eqref{eq:indexset_N} as
		\begin{align}
			\label{eq:gsis_approx}
			S_\bv(f)
			\approx
			S_\bv(\tilde f)
			= \frac{\sum_{\bk\in\I_{{\bN}^\bv}} |\tilde c_\bk|^2}{\sum_{\bk\in\I_{\bN}\setminus\{\boldsymbol{0}\}} |\tilde c_\bk|^2}, \quad \varnothing\neq\bv \subseteq [d],
		\end{align}
		with the intermediate approximation~$\tilde f$ built from the~$\tilde c_\bk$ of~\eqref{eq:coeffs_pinv}.
		This is accurate whenever~$f_\bv \approx \tilde f_\bv$, since~$\big| \sigma^2(f_\bv)-\sigma^2(\tilde f_\bv) \big| \leq \|f_\bv-\tilde f_\bv\| \,\big(2\|f_\bv\|+\|f_\bv-\tilde f_\bv\|\big)$ in~$L_2(\Omega_\bv,\omega_\bv)$, cf.~\cite[(5.3)]{PottWeid25}.
		Given a threshold~$\varepsilon\in(0,1)$, the relevant ANOVA terms are collected in
		\begin{align}
			\label{eq:indexset_V}
			V \coloneqq \{\varnothing\} \cup \left\{ \bv \subseteq [d] \colon 0<|\bv|\leq q,\ S_\bv(\tilde f) > \varepsilon \right\},
		\end{align}
		where~$\bv=\varnothing$ is added manually, since~\eqref{eq:gsis} carries no information about the constant term.
		Restricting~\eqref{eq:indexset_N} to these terms gives
		\begin{align}
			\label{eq:indexset_N_truncated}
			\I(V) \coloneqq \bigcup_{\bv\in V} \I_{{\bN}^\bv},
		\end{align}
		and we solve the correspondingly truncated least squares problem
		\begin{align}
			\label{eq:lsqr_problem_truncated}
			\min_{\bc\in\R^{|\I(V)|}} \| \bA_V \bc - \boldsymbol f \|_2^2
		\end{align}
		with the truncated matrix~$\bA_V \coloneqq \left(\varphi_\bk(\bx^i)\right)_{i=1,\,\bk\in \I(V)}^{M} \in \R^{M\times|\I(V)|}$
		to improve the fit of~\eqref{eq:coeffs_pinv}, and set
		\begin{align}\label{eq:discproj}
			f^\star(\bx) \coloneqq \sum_{\bk\in\I(V)} c^\star_\bk \varphi_\bk(\bx)
		\end{align}
		as our approximation of~$f$.
		The resulting workflow is summarized as follows.

		\begin{algorithm}{ANOVA approximation, cf.~\cite{PottSchm19a, PottSchm21, ANOVAapprox, pyANOVAapprox}}\label{alg:anova}
			\vspace{-1.5em}
			\begin{tabbing}
				\textbf{Input:} \quad \= $d\in\N$ \hspace{10em} \= dimension \\
				\> $q\in\N$, $q\ll d$ \> superposition dimension \\
				\> $\{\bx^1,\ldots,\bx^M\}\subset\Omega$, $M\in\N$ \> i.i.d.\ sample points drawn according to the density~$\omega$ \\
				\> $\boldsymbol f = \left(f(\bx^i)\right)_{i=1}^M$ \> function evaluations at the sample points \\
				\> $\varepsilon\in(0,1)$ \> threshold parameter \\
				\> $r>0$ \> confidence parameter, cf.\ Theorem~\ref{Thm:ANOVA_cheb}
			\end{tabbing}
			\begin{enumerate}
				\item For all subsets with~$|\bv|\leq q$ choose~${\bN}^\bv = \left(N_j^\bv\right)_{j=1}^{|\bv|} \in (2\N)^{|\bv|}$ in~\eqref{eq:indexset_Nv} such that the index set~$\I_\bN$ in~\eqref{eq:indexset_N} satisfies~$|\I_{\bN}| \leq \frac{1}{10\cdot2^{\alpha}(r+1)} \cdot \frac{M}{\log M}$ with the given~$r>0$, where~$\alpha\coloneqq\max_{\bk\in\I_\bN} |\supp{\bk}|\leq q$.
				\item Compute the estimated coefficients~$\tilde\bc = (\tilde c_\bk)_{\bk\in\I_\bN}$ as solution to the least squares problem~\eqref{eq:lsqr_problem} using the LSQR algorithm.
				\item Compute the approximate global sensitivity indices
				\begin{align*}
					S_\bv(\tilde f)
					= \frac{\sum_{\bk\in\I_{{\bN}^\bv}} |\tilde c_\bk|^2}{\sum_{\bk\in\I_{\bN}\setminus\{\boldsymbol{0}\}} |\tilde c_\bk|^2}, \quad \varnothing\neq\bv \subseteq [d],
				\end{align*}
				see~\eqref{eq:gsis_approx}.
				\item Determine the set
				\begin{align*}
					V = \{\varnothing\} \cup \left\{ \bv \subseteq [d] \colon 0<|\bv|\leq q,\ S_\bv(\tilde f) > \varepsilon \right\}
				\end{align*}
				of relevant ANOVA terms, see~\eqref{eq:indexset_V}, and set~$\I(V)=\bigcup_{\bv\in V} \I_{{\bN}^\bv}$, see~\eqref{eq:indexset_N_truncated}.
				\item Compute the coefficients~$\bc^\star = (c^\star_\bk)_{\bk\in\I(V)}$ as solution to the truncated least squares problem~\eqref{eq:lsqr_problem_truncated} using the LSQR algorithm.
			\end{enumerate}
			\textbf{Output:} coefficients~$\left(c^\star_\bk\right)_{\bk\in\I(V)}\in\R^{|\I(V)|}$ of the approximation~$f^\star = \sum_{\bk\in\I(V)} c^\star_\bk \varphi_\bk \approx f$, see~\eqref{eq:discproj}
		\end{algorithm}

		Algorithm~\ref{alg:anova} combines the truncation, projection, estimation, and post-processing steps introduced above into a single data-driven ANOVA approximation.
		Before analyzing its approximation error, we comment on two ways in which the procedure can be refined in practice.

		\begin{remark}[Possible improvements of the algorithm]
			Note that in steps 4 and 5 of Algorithm~\ref{alg:anova}, the index sets~\eqref{eq:indexset_Nv} may be further adjusted.
			Since the truncation by~$V$ in~\eqref{eq:indexset_V} typically renders~$\I(V)$ in~\eqref{eq:indexset_N_truncated} considerably smaller than~$\I_\bN$ in~\eqref{eq:indexset_N}, the condition~$M\geq|\I(V)|$ now suffices.
			This allows for larger choices of the corresponding bandwidth parameters~$\bN^\bv$ for the least squares problem~\eqref{eq:lsqr_problem_truncated}.
			In doing so, the additional basis coefficients are spent on the relevant terms~$\bv\in V$ only.
			Note, however, that~$M\geq|\I(V)|$ only guarantees that the truncated least squares problem is not underdetermined. The error bound of Theorem~\ref{Thm:ANOVA_cheb} below requires the strictly stronger sampling condition~$|\I(V)| \leq \frac{1}{10\cdot2^{\alpha}(r+1)}\cdot\frac{M}{\log M}$. If the bandwidths are enlarged beyond this condition, the resulting approximation~$f^\star$ is still computable, but it is no longer covered by the theory.

			Moreover, it is also possible to reiterate steps 3--5 of Algorithm~\ref{alg:anova} until no further truncation of the ANOVA decomposition can be achieved, i.\,e., until the set~$V$ in~\eqref{eq:indexset_V} no longer changes.
		\end{remark}

		\subsection{Error Analysis}

		In this part, we recall the approximation error of the ANOVA approximation, restricting our discussion to the Chebyshev basis~\eqref{eq:basis_cheb} for the sake of clarity of presentation.
		In this case, we define the Sobolev-type space of dominating mixed smoothness
		\begin{align*}
			H^s_{\mathrm{mix}}([-1,1]^d) \coloneqq \left\{ f\colon[-1,1]^d\to\R \;\Big|\; \|f\|_{H^s_{\mathrm{mix}}([-1,1]^d)}^2<\infty \right\},
			\quad s>0
		\end{align*}
		with the norm
		\begin{align}
			\label{eq:Hsmix_norm}
			\|f\|_{H^s_{\mathrm{mix}}([-1,1]^d)}^2 \coloneqq \sum_{\bk\in\N_0^d} |c_\bk(f)|^2 \prod_{j=1}^d \left(1+|k_j|^2\right)^s,
		\end{align}
		where the coefficients~$c_\bk(f)$ are given as in~\eqref{eq:basis_expansion_f}, and we refer the reader to~\cite[Corollary~2.2]{BaLuNaUl23} and~\cite[p.~9]{EgMiUl25} for details.

		The following result, which we recall from~\cite{SchmischkeDiss}, yields a worst-case bound for the error of the ANOVA approximation in Algorithm~\ref{alg:anova}.

		\begin{theorem}[\!\!{\cite[Theorem~4.10]{SchmischkeDiss}}]
			\label{Thm:ANOVA_cheb}
			Let~$M>3$ sample points drawn i.i.d.\ w.r.t.\ the Chebyshev density~$\omega^{\mathrm{cheb}}$ in~\eqref{eq:basis_cheb} be given such that for the index set~$\I(V)$ in~\eqref{eq:indexset_N_truncated} we have
			\begin{align*}
				|\I(V)| \leq \frac{M}{10\cdot 2^\alpha(r+1)\log M}
			\end{align*}
			for some~$r>0$ and~$\alpha\coloneqq\max_{\bk\in\I(V)} |\supp{\bk}|$.
			Then for all~$f\in H^s_{\mathrm{mix}}([-1,1]^d)$, $s>\frac12$, the solution~$f^\star$ in~\eqref{eq:discproj} of the least squares problem~\eqref{eq:lsqr_problem_truncated} satisfies the estimate
			\begin{align*}
				\|f - f^\star\|_{L_2([-1,1]^d,\omega^{\mathrm{cheb}})}^2
				\leq
				5 \|f\|_{H^s_{\mathrm{mix}}([-1,1]^d)}^2 \cdot \max \left\{
				\frac{2^d(1+\sqrt{5})^2}{2^\alpha\cdot 5|\I(V)|} \sum_{\bk\in\N_0^d\setminus\I(V)} \frac{1}{\beta(\bk)},
				\frac{1}{\inf_{\bk\in\N_0^d\setminus\I(V)} \beta(\bk)}
				\right\}
			\end{align*}
			with probability at least~$1-(2^{3/4}+1)M^{-r}$, where
			\begin{align}
				\label{eq:weight_Hmix}
				\beta(\bk) \coloneqq \prod_{j=1}^d \left(1+|k_j|^2\right)^s
			\end{align}
			denotes the weight function of~$H^s_{\mathrm{mix}}([-1,1]^d)$, cf.~\eqref{eq:Hsmix_norm}.
		\end{theorem}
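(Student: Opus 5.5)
The plan follows the standard least squares argument for random i.i.d.\ samples (cf.~\cite{KaUlVo19}), specialized to the Chebyshev system on the index set~$\I(V)$.

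\textbf{Step 1: Christoffel function bound.} For $\bk$ with $|\supp\bk|\le\alpha$, the orthonormal Chebyshev functions satisfy $\|\varphi_\bk\|_\infty^2\le 2^{|\supp\bk|}\le 2^\alpha$. Hence
\begin{align*}
\sup_{\bx}\sum_{\bk\in\I(V)}|\varphi_\bk(\bx)|^2\le 2^\alpha|\I(V)|.
\end{align*}

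\textbf{Step 2: Marcinkiewicz--Zygmund property.} Write $\bA_V$ for the $M\times|\I(V)|$ matrix. The matrix $\frac1M\bA_V^\top\bA_V$ is a sum of $M$ i.i.d.\ rank-one matrices with expectation $\bI$, and each summand has spectral norm at most $2^\alpha|\I(V)|/M$. The matrix Chernoff inequality then gives
\begin{align*}
\tfrac12\le\lambda_{\min}\le\lambda_{\max}\le\tfrac32
\end{align*}
with failure probability of order $|\I(V)|\exp\bigl(-cM/(2^\alpha|\I(V)|)\bigr)$. The sampling condition $|\I(V)|\le M/(10\cdot2^\alpha(r+1)\log M)$ makes this at most $M^{-r}$, which accounts for the summand $1\cdot M^{-r}$. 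On this event we have $\|(\bA_V^\top\bA_V)^{-1}\bA_V^\top\|\le\sqrt{2/M}$.

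\textbf{Step 3: Error splitting.} Put $P f=\sum_{\bk\in\I(V)}c_\bk(f)\varphi_\bk$ and $g\coloneqq f-Pf$. Since least squares reproduces $\operatorname{span}\mathcal D$, we get $f-f^\star=g-\tilde g$, where $\tilde g$ is the least squares fit of $g$. Orthogonality then gives
\begin{align*}
\|f-f^\star\|^2=\|g\|^2+\|\tilde g\|^2\le\|g\|^2+\tfrac2M\|\bA_V^\top\boldsymbol g\|_2^2 .
\end{align*}
The first term satisfies $\|g\|^2\le\|f\|_{H^s_{\mathrm{mix}}}^2/\inf_{\bk\notin\I(V)}\beta(\bk)$. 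This produces the second entry of the max.

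\textbf{Step 4: Bounding the fit of the tail.} For the term $\frac1M\|\bA_V^\top\boldsymbol g\|^2$, note that its expectation equals, up to the vanishing mean (since $g\perp\varphi_\bk$), $\frac1M\mathbb E\bigl[\sum_\bk|\varphi_\bk(\bx)|^2|g(\bx)|^2\bigr]\le\frac{2^\alpha|\I(V)|}{M}\|g\|_{L_2}^2$. A high-probability statement is needed instead, so I would use an $L_\infty$ bound on $g$. By Cauchy--Schwarz,
\begin{align*}
\|g\|_\infty^2\le\|f\|_{H^s_{\mathrm{mix}}}^2\sum_{\bk\notin\I(V)}\frac{\|\varphi_\bk\|_\infty^2}{\beta(\bk)}\le 2^d\|f\|_{H^s_{\mathrm{mix}}}^2\sum_{\bk\notin\I(V)}\beta(\bk)^{-1}.
\end{align*}
This is finite because $s>\tfrac12$, and it explains the factor $2^d$. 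Then apply Bernstein's inequality to the vector sum $\frac1M\sum_i\boldsymbol\varphi(\bx^i)g(\bx^i)$, or crudely Markov/Chernoff on $\frac1M\sum_i|g(\bx^i)|^2$, with deviation probability $2^{3/4}M^{-r}$. Combining this with $\|(\bA_V^\top\bA_V)^{-1}\|\le 2/M$ and the sampling condition (which converts $M^{-1}$ into $\lesssim 1/(2^\alpha|\I(V)|)$ times logarithmic factors absorbed by $r+1$) should give a bound of the form $\frac{2^d(1+\sqrt5)^2}{2^\alpha5|\I(V)|}\sum\beta^{-1}$. A union bound over the two events and the inequality $a+b\le 2\max\{a,b\}$, with the constant adjusted to $5$, finish the proof.

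\textbf{Main obstacle.} The concentration in Step 4 is the hard part: it requires matching the precise constants $(1+\sqrt5)^2/5$ and the probability $2^{3/4}M^{-r}$. I expect these to come from a specific Bernstein-type tail bound in~\cite{SchmischkeDiss}, so I would try to reproduce that argument rather than derive the constants independently.
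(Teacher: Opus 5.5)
The paper does not prove this theorem; it quotes it from \cite{SchmischkeDiss}, where the argument is the worst-case least squares analysis of \cite{KaUlVo19}. Your Steps~1--3 match that skeleton: Chernoff gives $\lambda_{\min}(\frac1M\bA_V^\top\bA_V)\ge\frac12$, and then $\|f-f^\star\|^2=\|g\|^2+\|\tilde g\|^2$. Step~4, however, has a genuine gap.

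\textbf{The gap.} The estimate is a worst-case bound. There is a single event of probability at least $1-(2^{3/4}+1)M^{-r}$ on which it holds for \emph{all} $f\in H^s_{\mathrm{mix}}([-1,1]^d)$ simultaneously. This is why the right-hand side is $\|f\|^2_{H^s_{\mathrm{mix}}}$ times an $f$-independent factor. Bernstein's inequality for $\frac1M\sum_i\boldsymbol\varphi(\bx^i)g(\bx^i)$ or for $\frac1M\sum_i|g(\bx^i)|^2$ concerns one fixed tail $g=f-Pf$. Its exceptional set depends on $g$, and since $g$ ranges over an infinite-dimensional space, no union bound makes it uniform. Markov's inequality would not even give an $M^{-r}$ tail for a fixed $g$. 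This is also why your route cannot produce $(1+\sqrt5)^2/5$ and $2^{3/4}$: those constants come precisely from the uniform step you are missing.

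\textbf{A scaling slip.} The correct bound is
\[
\|\tilde g\|^2\le\|(\bA_V^\top\bA_V)^{-1}\bA_V^\top\|_2^2\,\|\boldsymbol g\|_2^2\le\tfrac2M\|\boldsymbol g\|_2^2 .
\]
Your version with $\|\bA_V^\top\boldsymbol g\|_2^2$ in place of $\|\boldsymbol g\|_2^2$ is off by a factor of order $M$. Your expectation in Step~4 is off by the same factor.

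\textbf{The fix.} Write the tail as a linear image of its weighted coefficients and bound an operator norm. Set $y^i\coloneqq(\varphi_\bk(\bx^i)\,\beta(\bk)^{-1/2})_{\bk\in\N_0^d\setminus\I(V)}\in\ell_2$. Then $g(\bx^i)=\sum_{\bk\notin\I(V)}y^i_\bk\,\beta(\bk)^{1/2}c_\bk(f)$ pointwise, since the Chebyshev series converges absolutely for $s>\frac12$. Hence
\[
\sup_{\|f\|_{H^s_{\mathrm{mix}}}\le1}\frac1M\|\boldsymbol g\|_2^2=\Bigl\|\frac1M\sum_{i=1}^M y^i\otimes y^i\Bigr\|_{2\to2}.
\]
The $y^i$ are i.i.d.\ with the following properties.
\begin{itemize}
\item $\mathbb E[y^i\otimes y^i]=\operatorname{diag}(\beta(\bk)^{-1})_{\bk\notin\I(V)}$, whose norm is $\tau\coloneqq1/\inf_{\bk\notin\I(V)}\beta(\bk)\le1$.
\item $\|y^i\|_2^2\le 2^d\rho$ with $\rho\coloneqq\sum_{\bk\notin\I(V)}\beta(\bk)^{-1}$. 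This is exactly your $L_\infty$ estimate, now used as the almost sure bound on the summands.
\end{itemize}
Apply the concentration inequality for sums of i.i.d.\ rank-one operators on $\ell_2$ used in \cite{KaUlVo19}, with $\kappa=\frac{1+\sqrt5}{2}$ and with $r+1$ in place of $r$. With probability at least $1-2^{3/4}M^{-r}$, the norm above is at most $\tau+F$, where
\[
F=\max\Bigl\{8\kappa^2\,\tfrac{(r+1)\log M}{M}\,2^d\rho,\ \tau\Bigr\}.
\]
Since $8\kappa^2=2(1+\sqrt5)^2$, the sampling condition $\frac{(r+1)\log M}{M}\le\frac{1}{10\cdot2^\alpha|\I(V)|}$ bounds the first entry by $\frac{2^d(1+\sqrt5)^2}{2^\alpha\cdot5|\I(V)|}\rho$. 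On the intersection with your Chernoff event, which has probability at least $1-M^{-r}$, we get for all $f$ simultaneously
\[
\|f-f^\star\|^2\le\tau\|f\|^2+2(\tau+F)\|f\|^2=(3\tau+2F)\|f\|^2\le5F\|f\|^2 .
\]
This is the claimed estimate with probability at least $1-(2^{3/4}+1)M^{-r}$, and it explains the constant $5$.
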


		The error bound of Theorem~\ref{Thm:ANOVA_cheb} is governed by the chosen index set~$\I(V)$ and holds with a probability depending on the coupled parameters~$r$ and~$M$.
		Before relating it to the individual approximation steps, we point out a limitation in the high-dimensional regime. The first argument of the maximum carries the explicit factor~$2^d$, and the sum~$\sum_{\bk\notin\I(V)}1/\beta(\bk)$ is exponential in~$d$ as well, since~$\sum_{\bk\in\N_0^d}1/\beta(\bk) = C_s^d$ with~\mbox{$C_s\coloneqq\sum_{n\in\N_0}(1+n^2)^{-s}>1$}. The second argument, in contrast, is free of any explicit dimension dependence. Theorem~\ref{Thm:ANOVA_cheb} is therefore informative for moderate~$d$ and large smoothness~$s$ (note~$C_s\to1$ as~$s\to\infty$), while for large~$d$ it is a qualitative rather than a quantitative statement.

		\begin{remark}[{Error terms and choice of the parameters}]
			\label{Remark:lsqr_error}
			Writing~$P_{\I(V)}$ for the orthogonal projection
			\begin{align}\label{eq:projection}
				P_{\I(V)} f(\bx) \coloneqq \sum_{\bk\in\I(V)} \langle f,\varphi_\bk\rangle_{L_2(\Omega,\omega)} \,\varphi_\bk(\bx)
			\end{align}
			onto~$\operatorname{span}\{\varphi_{\bk}\}_{\bk\in\I(V)}$, cf.~\eqref{eq:basis_expansion_f_N}, the error of the approximation~$f^\star$ from~\eqref{eq:discproj} splits as
			\begin{align*}
				\|f^\star - f\|_{L_2(\Omega,\omega)}^2 = \underbrace{\|f^\star - P_{\I(V)} f\|_{L_2(\Omega,\omega)}^2}_{\text{estimation error}} + \underbrace{\|P_{\I(V)} f - f\|_{L_2(\Omega,\omega)}^2}_{\text{truncation and projection error}}.
			\end{align*}
			Therein, equality holds by the theorem of Pythagoras. Both~$f^\star$ and~$P_{\I(V)}f$ lie in~$\operatorname{span}\{\varphi_{\bk}\}_{\bk\in\I(V)}$, whereas \mbox{$f-P_{\I(V)}f$} is orthogonal to that subspace, such that the two summands of~$f^\star-f=(f^\star-P_{\I(V)}f)+(P_{\I(V)}f-f)$ are orthogonal.
					The two approximations differ only in how their coefficients are obtained, using the empirical problem~\eqref{eq:lsqr_problem_truncated} instead of the exact inner products. Hence, the first term measures the error introduced by the regression step, while the second term is the error of the best-approximation in the subspace.

			Both sources appear in Theorem~\ref{Thm:ANOVA_cheb}, though not as a trade-off between the two arguments of the maximum. By~\eqref{eq:Hsmix_norm}, the second argument is the best-approximation bound
			\begin{align*}
				\|f-P_{\I(V)}f\|^2_{L_2(\Omega,\omega)} \leq \|f\|^2_{H^s_{\mathrm{mix}}([-1,1]^d)} \big/ \inf_{\bk\notin\I(V)}\beta(\bk),
			\end{align*}
			whereas the first carries the constants of the sampling step. Both decrease monotonically as~$\I(V)$ grows and tend to zero for~$\I(V)\uparrow\N_0^d$, and neither contains~$M$. The sample size enters only through the admissibility condition~$|\I(V)|\leq M/(10\cdot 2^\alpha(r+1)\log M)$.
			Consequently, enlarging~$\I(V)$ improves the bound, and for a given~$M$ it is optimal to choose~$\I(V)$ as large as that condition permits.
			The projection error itself vanishes for~$\I(V)=\N_0^d$ and can be reduced either by admitting more basis coefficients per ANOVA term, cf.~\eqref{eq:indexset_Nv}, or by admitting more ANOVA terms. For specific~$\I(V)$, the two quantities of Theorem~\ref{Thm:ANOVA_cheb} are evaluated explicitly in~\cite[pp.~126--127]{SchmischkeDiss}.
			The estimation error vanishes as~$M\to\infty$ as due to the normal equations~\eqref{eq:normal} and for i.i.d.\ samples, Monte-Carlo integration gives~$\frac1M\bA_V^\top \bA_V \to \bI$ and~$\frac1M\bA_V^\top\boldsymbol f \to (\langle f,\varphi_\bk\rangle)_{\bk\in\I(V)}$, so that~\eqref{eq:normal} converges to the coefficients of~\eqref{eq:projection}.

			For finite~$M$, the parameters are coupled through the two constraints
			\begin{align}
				c_1 \leq \frac{M}{(r+1)\log M}, \quad & c_1 \coloneqq 10\cdot 2^\alpha |\I(V)|, \label{eq:1st_constraint} \\[1ex]
				1-c_2M^{-r} \geq p, \quad & c_2 \coloneqq 2^{3/4}+1, \label{eq:2nd_constraint}
			\end{align}
			for a prescribed target certainty~$p\in(0,1)$. Here~$r>0$ is a purely theoretical parameter, while~$M$ is the actual sampling effort.
			Since the right-hand side of~\eqref{eq:1st_constraint} increases monotonically for~$M>3$, the smallest admissible~$M$ is optimal, so~\eqref{eq:1st_constraint} may be assumed active. This gives~$r = M/(c_1\log M)-1$ and, inserted into~\eqref{eq:2nd_constraint}, the condition~$\log((1-p)/c_2) \geq \log M - M/c_1$ determining~$M$.
			These constraints couple target accuracy (through~$c_1$, since a smaller error bound requires a larger~$\I(V)$), certainty~$p$ and sample size~$M$. Fixing any one of them yields a monotone trade-off between the remaining two.
			For fixed~$c_1$, for instance, the achievable certainty~$p(M) = 1-c_2\,M^{\,1-M/(c_1\log M)}$ increases monotonically in~$M$ with~$p(M)\to1$. Conversely, for fixed~$M$ a larger~$\I(V)$ lowers the error bound but requires a smaller~$r$, which in turn leads to a smaller certainty.
		\end{remark}

		\section{Control systems\label{sec:control}}

		In this section, we present our
		main contribution, i.\,e.,
		a tailored and efficient ANOVA approximation for control-affine
		systems whose right-hand side exhibits
		a low-dimensional coupling structure.
		Such systems arise from spatial discretizations of controlled partial differential equations, where the locality of the differential operators induces sparse interactions between the state variables. Consequently, each component couples only with its spatial neighbours.
		We will make the ANOVA machinery of Section~\ref{sec:ANOVA} applicable to this setting while leveraging the control-affine structure.

		Control-affine dynamical systems are of the form
		\begin{align*}
			\bx^+ = F(\bx) + G(\bx)\bu.
		\end{align*}
		Therein, $\bx\in \R^d$ is the state, the control~$\bu$ evolves in a compact control constraint set~$\U \subset \R^m$, and the maps~$F\colon\R^d \to \R^d$ and~$G\colon\R^d \to \R^{d\times m}$ with~$d,m \in \N$ describe the drift and the input, respectively. Note that the number of actuators is usually much smaller than the number of states, i.\,e., $m \ll d$. 
		We now define~$H\colon\R^d\times \R^m \to \R^d$ via 
		\begin{align}
			\label{eq:system_H}
			H(\bx,\bu) \coloneqq 
			\begin{bmatrix}
				F(\bx) & G(\bx)
			\end{bmatrix}\begin{bmatrix}
				1 \\ \bu
			\end{bmatrix} =  F(\bx) + G(\bx)\bu
		\end{align}
		and assume that~$F$ and~$G$ are of low-dimensional structure. More precisely, we require that, for suitable index sets, we have
		\begin{align}
			\label{eq:ANOVA_struct_F,G}
			F_j(\bx) = \sum_{\bv \in V_{j,0}} f^j_\bv(\bx_\bv), \qquad G_{j\ell}(\bx) = \sum_{\bv \in V_{j,\ell}} g^{j\ell}_\bv(\bx_\bv), \quad j=1,\dots,d,\, \ell=1,\ldots,m,
		\end{align}
		where we denote the functions~$F$ and~$G$ entry-wise as~$F(\bx) = \left( F_j(\bx) \right)_{j=1}^d$ and~$G(\bx) = \left( G_{j\ell}(\bx) \right)_{j=1, \ell=1}^{d,\,m}$, with~$V_{j,\ell}\subset 2^{[d]}$ being index sets with small cardinality for all~$j=1,\dots,d$, and~$\ell=0,\dots,m$. Moreover, we denote by~$G_\ell\coloneqq(G_{j\ell})_{j=1}^d$ the $\ell$-th column of~$G$, i.\,e., the $\ell$-th control vector field.
		Our task is then to reconstruct this structure from given samples.
		We illustrate the class of systems~\eqref{eq:system_H} with two examples, namely a small benchmark oscillator and a higher-dimensional system stemming from a discretized partial differential equation.

		\begin{example}
			\label{ex:Duffing}
			Consider the Euler approximation of a controlled Duffing oscillator
			\begin{align}
				\label{eq:Duffing}
				\bx^+ = \bx + \Delta t \begin{pmatrix} x_2 \\ x_1 - 3 x_1^3 u \end{pmatrix}
				= \begin{pmatrix} x_1 + \Delta t x_2 \\ x_2 + \Delta t(x_1 - 3x_1^3 u) \end{pmatrix}
			\end{align}
			which corresponds to the system~\eqref{eq:system_H} with~$d=2$, $m=1$, and
			\begin{align*}
				F(\bx) = \begin{pmatrix} x_1 + \Delta t x_2 \\ x_2 + \Delta t x_1 \end{pmatrix}, \qquad
				G(\bx) = \begin{pmatrix} 0 \\ -3\Delta t x_1^3 \end{pmatrix}.
			\end{align*}
			Clearly, these functions~$F$ and~$G$ are of low-dimensional structure compared to the small state space dimension of this problem, since only one-dimensional terms occur, but no constant or two-dimensional terms. In the notation of~\eqref{eq:ANOVA_struct_F,G} we have the index sets~$V_{1,0}=V_{2,0}=\{\{1\}, \{2\}\}$, $V_{1,1}=\varnothing$, as~$G_{11}\equiv 0$ contributes no terms at all, and~$V_{2,1}=\{\{1\}\}$.
		\end{example}

		The next example is higher-dimensional and closer to the applications targeted in Section~\ref{sec:numerics}, where we will illustrate the proposed method on discretizations of partial differential equations, which naturally lead to high-dimensional problems with a low-dimensional coupling structure.

		\begin{example}[Controlled viscous Burgers equation]
			\label{ex:burgers}
			Consider the controlled viscous Burgers equation
			\begin{align*}
				\frac{\partial}{\partial t}x = D\,\frac{\partial^2}{\partial \omega^2}x - x\,\frac{\partial}{\partial \omega}x + \gamma(\omega)\, x\, U_1 + b(\omega)\, U_2,
				\qquad \omega\in(0,L), \; t>0,
			\end{align*}
			with viscosity~$D>0$ and two scalar controls~$\bu=(U_1,U_2)^\top$ acting through nonnegative actuator profiles~$\gamma,b:(0,L) \to \R$.
			We discretize the spatial domain by an equidistant grid~$\omega_i \coloneqq i\,\Delta \omega$, $i=1,\dots,d$, with grid size~$\Delta \omega \coloneqq L/d>0$. All quantities indexed by~$i$ denote evaluations at the grid point~$\omega_i$, i.\,e., $X_i \approx x(\omega_i,\cdot)$, $\gamma_i \coloneqq \gamma(\omega_i)$, and~$b_i \coloneqq b(\omega_i)$.
			Using a finite-difference approximation of the spatial derivatives on this grid and an explicit Euler method with time step~$\Delta t>0$, we obtain a system of the form~\eqref{eq:system_H} with~$m=2$ and
			\begin{align*}
				F_i(X) &= X_i + \Delta t\left(D\,\frac{X_{i+1} - 2X_i + X_{i-1}}{\Delta \omega^2} - X_i\,\frac{X_{i+1}-X_{i-1}}{2\Delta \omega} \right), \\
				G_{i,1}(X) &= \Delta t\,\gamma_i\, X_i, \qquad
				G_{i,2}(X) = \Delta t\, b_i, \qquad i=1,\dots,d,
			\end{align*}
			at each spatial point~$i$. Here, we understand the indices cyclically, i.\,e.,~$X_0\coloneqq X_d$ and~$X_{d+1}\coloneqq X_1$, corresponding to periodic boundary conditions.
			Again, $F$ and~$G$ are of low-dimensional structure. The diffusion term contributes the one-dimensional terms of node~$i$ and of its two neighbours, while the advection term contributes the two adjacent pair interactions. The input map is state-affine in the first control and constant in the second.
			In the notation of~\eqref{eq:ANOVA_struct_F,G} this yields, for every~$i=1,\dots,d$, the index sets
			\begin{align*}
				V_{i,0} = \bigl\{\{i-1\},\{i\},\{i+1\},\{i-1,i\},\{i,i+1\}\bigr\}, \quad
				V_{i,1} = \bigl\{\{i\}\bigr\}, \quad
				V_{i,2} = \{\varnothing\},
			\end{align*}
			where~$V_{i,1}=\varnothing$ for~$\gamma_i=0$ and~$V_{i,2}=\varnothing$ for~$b_i=0$.
			Hence, although the number of discretization points and hence the dimension $d$ may be large, every interaction set satisfies~$|\bv|\leq 2$.
			We remark this low-dimensional structure is also present in the context of other boundary conditions, e.\,g., of Dirichlet or Neumann type, only modify the stencil at the two boundary nodes~$i\in\{1,d\}$. 
		\end{example}

		In general, the vector field $F$ and the columns of $G$ cannot be sampled separately, such that an off-the-shelf application of the ANOVA approximation of Section~\ref{sec:ANOVA} to the individual functions in~\eqref{eq:ANOVA_struct_F,G} is not possible. Therefore, we aim for a coupled reconstruction of the dynamics map~\eqref{eq:system_H}.
		However, applying the ANOVA approximation as a black box to this coupled map does not leverage the structure, since ignoring the control-affine structure would spend considerable
		computational effort on interactions of the control components that are excluded a priori.

		In many data-driven scenarios, one merely observes the system under some given excitation, i.\,e., one is provided with samples~$(\bx^i,\bu^i,H(\bx^i,\bu^i))$. These pairs of samples correspond to state, the employed action, and the resulting successor state.
		In the following, we therefore develop an approach that works with i.i.d.\ sampling points~$(\bx^i,\bu^i)$ drawn from a tensor-product density on~$\Omega\times\U$ and computes an approximation by coupled least squares leveraging the control-affine structure.
		To this end, we assume that both the control domain~$\U=\bigtimes_{\ell=1}^m\U_\ell$ and the sampling density~$\widetilde\omega=\prod_{\ell=1}^m\omega_{d+\ell}$ are of tensor-product structure. 
		The main idea is to encode the control-affine structure of~\eqref{eq:system_H} directly into the approximation ansatz. To this end, we employ a dictionary consisting of basis functions in~$\bx$ only, augmented by the same basis functions multiplied by the components~$u_\ell$ of the control.
		Thereby, the ansatz is affine linear in~$\bu$ by construction, and no spurious interactions between the components of~$\bu$ have to be computed.
		We first discuss the case of a univariate control in Section~\ref{sec:1d}, including the resulting coupled least squares problem and its interpretation as an orthogonal projection. In Section~\ref{sec:error_coupled}, we prove an error bound for finitely many samples in the spirit of the bound for uncontrolled scalar functions recalled in Theorem~\ref{Thm:ANOVA_cheb}. Afterwards, we generalize the ansatz and the error bound to multivariate controls in Section~\ref{sec:2d} and refer the reader to Appendix~\ref{sec:complete_func} for an implementation of the resulting approximation via preprocessed ANOVA index sets.

		\subsection[]{Coupled least squares approximation for univariate control}\label{sec:linearity}\label{sec:1d}

		We first examine the
		case~$m=1$ in~\eqref{eq:system_H}, i.\,e., single-input systems.
		In this case, the system to be approximated reads
		\begin{align*}
			\begin{pmatrix} H_1(\bx,u) \\ \vdots \\ H_d(\bx,u) \end{pmatrix} = H(\bx,u) = F(\bx) + G(\bx)u, \quad u\in\R.
		\end{align*}
		Since the ANOVA decomposition of Section~\ref{sec:ANOVA} is defined for scalar-valued functions, we proceed rowwise, i.\,e., our objective is to approximate each row of~\eqref{eq:system_H} separately by means of the ANOVA approximation of Section~\ref{sec:ANOVA_implementation}. Consistent with the structure of the map $H$, we choose an ansatz of the form
		\begin{align}
			\label{eq:approx_H_rowwise}
			H_j(\bx,u) \approx \sum_{\bk\in\I(V_1)} c_\bk^1 \varphi^1_\bk(\bx) + \sum_{\bs\in\I(V_2)} c_\bs^2 \varphi^2_\bs(\bx) u, \quad j=1,\dots,d,
		\end{align}
		where~$\{\varphi^1_\bk\}_{\bk\in\I(V_1)}$ and~$\{\varphi^2_\bs\}_{\bs\in\I(V_2)}$ are two possibly different orthonormal systems.
		To compute the coefficients, we sample
		system~\eqref{eq:system_H} at i.i.d.\ data points~$(\bx^i,u^i)\in\Omega\times\U \subset \R^{d+1}$, $i=1,\dots,M$, drawn according to the tensor product density~$(\omega \cdot \omega_{d+1})(\bx,u)\coloneqq\omega(\bx)\,\omega_{d+1}(u)$ with~$M\in\N$ and~$\omega_{d+1}$ being the density corresponding to~$\U\subset\R$.
		Throughout this section, we abbreviate the first two moments of the control distribution by
		\begin{align}
			\label{eq:eta_nu}
			\eta \coloneqq \langle 1,u \rangle_{L_2(\U,\omega_{d+1})} = \int_\U u \,\omega_{d+1}(u) \,\mathrm du,
			\qquad
			\nu \coloneqq \|u\|^2_{L_2(\U,\omega_{d+1})} = \int_\U u^2 \,\omega_{d+1}(u) \,\mathrm du.
		\end{align}
		Here, $\eta$ is the mean and $\nu$ the second moment of a control drawn according to~$\omega_{d+1}$, such that~$\nu-\eta^2$ is its variance.
		Note that~$\|1\|^2_{L_2(\U,\omega_{d+1})} = \int_\U \omega_{d+1}(u)\,\mathrm du = 1$, since~$\omega_{d+1}$ is a probability density, such that expanding the square yields
		\begin{align}
			\label{eq:cs_control}
			0\leq 	\|u-\eta\|^2_{L_2(\U,\omega_{d+1})}
			= \nu - 2\eta\,\langle 1,u \rangle_{L_2(\U,\omega_{d+1})} + \eta^2 \,\|1\|^2_{L_2(\U,\omega_{d+1})}
			= \nu-\eta^2.
		\end{align}
		We therefore have $\eta^2\leq\nu$, and equality holds if and only if~$\|u-\eta\|_{L_2(\U,\omega_{d+1})}=0$, i.\,e., if and only if~$u$ equals its mean~$\eta$ $\omega_{d+1}$-almost everywhere.
		Negating this equality case shows that the following three conditions are equivalent:
		\begin{enumerate}[label=(\roman*)]
			\item The control~$u$ is not $\omega_{d+1}$-almost everywhere constant.
			\item The variance~$\nu-\eta^2$ of the control distribution is positive.
			\item The control~$u$ attains at least two distinct values on sets of positive measure.
		\end{enumerate}
		Indeed, (i) is the negation of the equality case just discussed, (ii) is equivalent to (i) by~\eqref{eq:cs_control}, and (iii) restates~(i) for a real-valued random variable. We call a control distribution with these properties \emph{non-degenerate}.
		For later reference, we state this property as an assumption.
		\begin{assumption}[Non-degenerate control distribution]
			\label{Ass:nondegenerate}
			The control distribution is non-degenerate, i.\,e., $\nu-\eta^2>0$ with~$\eta$ and~$\nu$ from~\eqref{eq:eta_nu}.
		\end{assumption}
		This assumption will be used repeatedly below. It guarantees that the control samples carry enough information to separate the contributions of~$F_j$ and~$G_j$, whereas these contributions are indistinguishable for an almost everywhere constant control.

		For each~$j=1,\dots,d$, we define the vectors~$\boldsymbol h \coloneqq \left(H_j(\bx^i,u^i)\right)_{i=1}^M \in \R^M$,
		$\bc^1 \coloneqq \left(c_\bk^1\right)_{\bk\in\I(V_1)} \in \R^{|\I(V_1)|}$,
		and~$\bc^2 \coloneqq \left(c_\bs^2\right)_{\bs\in\I(V_2)} \in \R^{|\I(V_2)|}$,
		as well as the matrices
		\begin{align}
			\label{eq:matrices_A1A2}
			\bA_1 \coloneqq \left(\varphi^1_\bk(\bx^i)\right)_{i=1,\,\bk\in\I(V_1)}^{M} \in \R^{M\times|\I(V_1)|}, 
			\qquad
			\bA_2 \coloneqq \left(\varphi^2_\bs(\bx^i)\right)_{i=1,\,\bs\in\I(V_2)}^{M} \in \R^{M\times|\I(V_2)|},
		\end{align}
		and
		\begin{align}
			\label{eq:matrix_U}
			\bU \coloneqq \operatorname{diag}(u^1,\ldots,u^M)\in \R^{M\times M}.
		\end{align}
		Then, for fixed~$j=1,\dots,d$, the approximation~\eqref{eq:approx_H_rowwise} can be written in matrix-vector notation as
		\begin{align}
			\label{eq:matrixform_m1}
			\boldsymbol h
			\approx
			\begin{pmatrix}
				\bA_1 & \bU \bA_2
			\end{pmatrix}
			\begin{pmatrix}
				\bc^1\\
				\bc^2
			\end{pmatrix}.
		\end{align}
		The coefficients~$\bc^1, \bc^2$ are then computed analogously to~\eqref{eq:lsqr_problem} by solving the least squares problem
		\begin{align}
			\label{eq:opt_fullanova}
			\underset{(\tilde\bc^1,\tilde\bc^2)^\top\in\R^{|\I(V_1)|+|\I(V_2)|}}{\min} \left\| \begin{pmatrix} \bA_1 & \bU \bA_2 \end{pmatrix}  \begin{pmatrix} \tilde\bc^1 \\ \tilde\bc^2 \end{pmatrix} - \boldsymbol h \right\|_2^2.
		\end{align}
		For the least squares solution to be computable by the normal equations
		\begin{align}
			\label{eq:neq_control}
			\begin{pmatrix}
				\bA_1^\top \\
				\bA_2^\top \bU
			\end{pmatrix}
			\begin{pmatrix}
				\bA_1 & \bU \bA_2
			\end{pmatrix}
			\begin{pmatrix}
				\tilde\bc^1\\
				\tilde\bc^2
			\end{pmatrix}
			=
			\begin{pmatrix}
				\bA_1^\top \\
				\bA_2^\top \bU
			\end{pmatrix}
			\boldsymbol h,
		\end{align}
		we need to ensure that the system matrix~$\begin{pmatrix} \bA_1 & \bU \bA_2 \end{pmatrix} \in\R^{M\times(|\I(V_1)|+|\I(V_2)|)}$ has full column rank.
		To this end, we necessarily
		require $M \geq |\I(V_1)|+|\I(V_2)|$, i.\,e., sufficiently many sample points~$\bx^i$, $i=1,\dots,M$.
		In addition, we may assume that~$\bA_1, \bA_2$ have full column rank, see~Section~\ref{sec:ANOVA_implementation}.
		Whether the composed matrix has full column rank depends on the sampled control values~$u^i$ and thus on their underlying sampling distribution. For instance, for~$\varphi^1_{\boldsymbol 0}=\varphi^2_{\boldsymbol 0}\equiv 1$ and constant control samples~$u^1=\dots=u^M=c\neq 0$ the matrix~$\begin{pmatrix} \bA_1 & \bU \bA_2 \end{pmatrix} = \begin{pmatrix} \boldsymbol 1 & c\boldsymbol 1 \end{pmatrix}$ is rank deficient. 
		However, by Proposition~\ref{Prop:limit_projection} below the scaled Gram matrix converges to a positive-definite limit under Assumption~\ref{Ass:nondegenerate}. Thus, for almost every realization the system matrix has full column rank for all sufficiently large~$M$. A quantitative version of this statement, which holds with high probability, is given in Proposition~\ref{Prop:MZ} below.

		Before turning to the error analysis, we clarify the main objective of the coupled least squares problem~\eqref{eq:opt_fullanova}.
		To this end, we consider the infinite-data limit.
		Roughly speaking and after scaling by~$\frac1M$, the entries of the normal equations~\eqref{eq:neq_control} are Monte-Carlo estimates of the corresponding inner products in~$L_2(\Omega\times\U,\omega\cdot\omega_{d+1})$. Hence, the scaled discrete system converges to a continuous one as the number~$M$ of samples grows.
		The following proposition identifies this limit system as the one characterizing the orthogonal projection of~$H_j$ onto the span of a particular dictionary corresponding to the ansatz space of~\eqref{eq:approx_H_rowwise}. It also states that this system is uniquely solvable and shows that the coefficients computed from finitely many samples converge to those of this projection.
		Consequently, the approach of this section is an empirical least squares approximation with respect to this dictionary. This interpretation is used throughout the remainder of the section. It may be used as a guidance for the choice of the control density (see Remark~\ref{Remark:schur}), allows the comparison with a separate ANOVA approximation of~$F$ and~$G$ in Remark~\ref{Remark:anova_separate}, and leads to the multivariate analogue in Section~\ref{sec:2d}.

		\begin{proposition}
			\label{Prop:limit_projection}
			Let~$(\bx^i,u^i)$, $i=1,\dots,M$, be i.i.d.\ samples drawn according to the density~$\omega\cdot\omega_{d+1}$, let Assumption~\ref{Ass:nondegenerate} hold, and let~$F_j,\,G_j\in L_2(\Omega,\omega)$.
			Then, abbreviating by
			\begin{align*}
				\mathcal{\widetilde D} = \{\varphi^1_\bk \colon \bk\in\I(V_1)\} \cup \{\varphi^2_\bs u \colon \bs\in\I(V_2)\}
			\end{align*}
			the dictionary of the ansatz~\eqref{eq:approx_H_rowwise}, the following assertions hold.
			\begin{enumerate}[label=(\roman*)]
				\item After scaling by~$\frac1M$, the normal equations~\eqref{eq:neq_control} converge for~$M\to\infty$, entrywise and almost surely, to a limit system.
				\item This limit system is the Gram system of~$\mathcal{\widetilde D}$ in~$L_2(\Omega\times\U,\omega\cdot\omega_{d+1})$. It is uniquely solvable, and its solution consists of the coefficients of the orthogonal projection~$P_{\mathcal{\widetilde D}}H_j$ of~$H_j$ onto~$\operatorname{span}\mathcal{\widetilde D}$.
				\item For almost every realization of the samples, the scaled normal equations are uniquely solvable for all sufficiently large~$M$, and their solutions converge to the coefficients of~$P_{\mathcal{\widetilde D}}H_j$.
			\end{enumerate}
		\end{proposition}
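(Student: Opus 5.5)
The plan is to apply the strong law of large numbers entrywise to the scaled normal equations~\eqref{eq:neq_control}, identify the limit as the Gram system of~$\mathcal{\widetilde D}$, show that this Gram matrix is positive definite under Assumption~\ref{Ass:nondegenerate}, and finish with continuity of matrix inversion.

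\textbf{Step (i).} After scaling by~$\frac1M$, the blocks of the system matrix are
$\frac1M\bA_1^\top\bA_1$, $\frac1M\bA_1^\top\bU\bA_2$ and~$\frac1M\bA_2^\top\bU^2\bA_2$. A typical entry is
$\frac1M\sum_{i}\varphi^1_\bk(\bx^i)\varphi^2_\bs(\bx^i)u^i$. The same holds for the right-hand side, e.g. $\frac1M\sum_i \varphi^2_\bs(\bx^i)u^iH_j(\bx^i,u^i)$.
Each entry is an empirical mean of i.i.d.\ random variables. The strong law of large numbers gives almost sure convergence to the corresponding inner product in~$L_2(\Omega\times\U,\omega\cdot\omega_{d+1})$, provided these variables are integrable. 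Integrability follows from Cauchy--Schwarz: the dictionary functions lie in~$L_2$ (boundedness of~$\U$ gives $u\varphi^2_\bs\in L_2$), and $H_j=F_j+G_ju\in L_2$ since~$F_j,G_j\in L_2(\Omega,\omega)$ and~$\U$ is compact. Only finitely many entries occur, so a single null set suffices.

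\textbf{Step (ii).} By the product structure, the limit entries factor as
$\langle\varphi^1_\bk,\varphi^1_{\bk'}\rangle$, $\eta\langle\varphi^1_\bk,\varphi^2_\bs\rangle$ and~$\nu\langle\varphi^2_\bs,\varphi^2_{\bs'}\rangle$. These are exactly the entries of the Gram matrix~$\bk G$ of~$\mathcal{\widetilde D}$, and the limit right-hand side is $(\langle H_j,\psi\rangle)_{\psi\in\mathcal{\widetilde D}}$. Hence the limit system is the Gram system, whose solution (if unique) gives the coefficients of~$P_{\mathcal{\widetilde D}}H_j$.

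The key step, and the main obstacle, is positive definiteness of~$\bk G$, i.e.\ linear independence of~$\mathcal{\widetilde D}$ in $L_2(\Omega\times\U)$. Suppose $p(\bx)+q(\bx)u=0$ almost everywhere, where $p\in\operatorname{span}\{\varphi^1_\bk\}$ and $q\in\operatorname{span}\{\varphi^2_\bs\}$. Then
\[
0=\|p+qu\|^2=\|p\|^2+2\eta\langle p,q\rangle+\nu\|q\|^2 .
\]
Completing the square gives
\[
\|p+\eta q\|^2+(\nu-\eta^2)\|q\|^2=0 .
\]
Assumption~\ref{Ass:nondegenerate} then forces $q=0$, and hence $p=0$. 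Orthonormality, or at least linear independence, of each system in~$L_2(\Omega,\omega)$ then gives vanishing coefficients. So $\bk G$ is positive definite and the limit system is uniquely solvable.

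\textbf{Step (iii).} Fix a realization in the full-measure set from~(i). The scaled matrices converge to~$\bk G$. Since the determinant is continuous, they are invertible for all $M\ge M_0$, where $M_0$ depends on the realization; equivalently, the smallest eigenvalue eventually exceeds $\lambda_{\min}(\bk G)/2$. This also yields full column rank of $\begin{pmatrix}\bA_1&\bU\bA_2\end{pmatrix}$. Continuity of inversion, combined with convergence of the right-hand sides, then implies that the solutions converge to the coefficients of~$P_{\mathcal{\widetilde D}}H_j$.
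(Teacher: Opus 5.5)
Your proof is correct and follows the paper's skeleton. Both use the strong law of large numbers entrywise, with integrability from Cauchy--Schwarz and compactness of $\U$. Both then use Fubini to factor the limits into $\eta$ and $\nu$ times inner products in $L_2(\Omega,\omega)$, identify the limit with the Gram system of $\mathcal{\widetilde D}$, and finish~(iii) by continuity of the determinant and of inversion.

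The one genuine difference is how you prove linear independence of $\mathcal{\widetilde D}$.

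\emph{The paper's argument.} It works pointwise: from $p(\bx)+q(\bx)u=0$ almost everywhere it slices via Fubini. For almost every $\bx$, an affine function of $u$ then vanishes at two distinct control values. This relies on restating non-degeneracy as ``$u$ attains two distinct values on sets of positive measure''.

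\emph{Your argument.} You expand the norm and complete the square,
$\|p+qu\|^2=\|p+\eta q\|^2+(\nu-\eta^2)\|q\|^2$. This uses Assumption~\ref{Ass:nondegenerate} literally in the form $\nu-\eta^2>0$ and needs no slicing.

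\emph{What each buys.} Your identity is the Schur-complement factorization of $\bigl(\begin{smallmatrix}1&\eta\\ \eta&\nu\end{smallmatrix}\bigr)$. It is therefore essentially the computation behind the spectral bounds of Remark~\ref{Remark:schur} and the positive definiteness of $\boldsymbol N$ in~\eqref{eq:moment_matrix}. It carries over verbatim to multivariate controls and points directly toward conditioning estimates. The paper's pointwise argument is purely qualitative and never uses the inner-product structure in $u$. In exchange, it makes transparent, without any computation, why linear independence fails for an almost everywhere constant control.
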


		\begin{proof}
			We prove the three assertions in the order in which they are stated.

			\emph{Convergence of the sampled inner products.}
			The normal equations~\eqref{eq:neq_control} may also be written as
			\begin{align}
				\label{eq:normal_system_linear}
				\begin{pmatrix}
					\bA_1^\top \bA_1 & \bA_1^\top \bU \bA_2 \\
					\bA_2^\top \bU \bA_1 & \bA_2^\top \bU^2 \bA_2
				\end{pmatrix}  
				\begin{pmatrix}
					\tilde\bc^1 \\
					\tilde\bc^2
				\end{pmatrix}
				=
				\begin{pmatrix}
					\bA_1^\top \boldsymbol h \\
					\bA_2^\top \bU \boldsymbol h
				\end{pmatrix}.
			\end{align}
			Then the strong law of large numbers yields for i.i.d.\ data points~$(\bx^i,u^i)\in\Omega\times\U \subset \R^{d+1}$, $i=1,\dots,M$, drawn according to the density~$\omega \cdot \omega_{d+1}$ that, after scaling, the entries of the block matrix converge almost surely to
			\begin{align*}
				\lim_{M\to\infty} \frac{1}{M}[\bA_1^\top \bA_1]_{\bk,\boldsymbol n} &= \lim_{M\to\infty} \frac1M \sum_{i=1}^M \varphi^1_\bk(\bx^i) \varphi^1_{\boldsymbol n}(\bx^i) = \langle \varphi^1_\bk,\varphi^1_{\boldsymbol n} \rangle_{L_2(\Omega,\omega)} = \delta_{\bk,\boldsymbol n}, \bk,\,\boldsymbol n\in\I(V_1), \\
				\lim_{M\to\infty} \frac1M [\bA_1^\top \bU \bA_2]_{\bk,\boldsymbol s} &= \lim_{M\to\infty} \frac1M \sum_{i=1}^M \varphi^1_\bk(\bx^i) u^i \varphi^2_{\boldsymbol s}(\bx^i) = \langle \varphi^1_\bk, u\varphi^2_{\boldsymbol s} \rangle_{L_2(\Omega\times\U,\omega \cdot \omega_{d+1})}, \bk\in\I(V_1),\,\boldsymbol s\in\I(V_2), \\
				\lim_{M\to\infty} \frac1M [\bA_2^\top \bU^2 \bA_2]_{\boldsymbol r,\boldsymbol s} &= \lim_{M\to\infty} \frac1M \sum_{i=1}^M u^i \varphi^2_{\boldsymbol r}(\bx^i) u^i \varphi^2_{\boldsymbol s}(\bx^i) = \langle u\varphi^2_{\boldsymbol r},u\varphi^2_{\boldsymbol s} \rangle_{L_2(\Omega\times\U,\omega \cdot \omega_{d+1})}, \boldsymbol r,\,\boldsymbol s\in\I(V_2).
			\end{align*}
			Here, the strong law of large numbers is applicable since all involved random variables are integrable. Indeed, the products of two basis functions are integrable by the Cauchy--Schwarz inequality, the additional factors~$u^i$ are bounded by the compactness of~$\U$, and~$H_j\in L_2(\Omega\times\U,\omega\cdot\omega_{d+1})$.
			Moreover, all integrands appearing above are absolutely integrable. By Tonelli's theorem, the Cauchy--Schwarz inequality and the orthonormality of the two systems we have
			\begin{align*}
				\int_{\Omega\times\U} \big|\varphi^1_\bk(\bx) \varphi^2_{\boldsymbol s}(\bx)\, u\big| \,(\omega \cdot \omega_{d+1})(\bx,u) \,\mathrm{d}(\bx,u)
				\leq \|\varphi^1_\bk\|_{L_2(\Omega,\omega)} \|\varphi^2_{\boldsymbol s}\|_{L_2(\Omega,\omega)} \max_{u\in\U}|u| < \infty.
			\end{align*}
			The remaining integrands can be treated analogously, where we used the compactness of~$\U$.
			Hence, Fubini's theorem yields
			\begin{align}
				\langle \varphi^1_\bk, u\varphi^2_{\boldsymbol s} \rangle_{L_2(\Omega\times\U,\omega \cdot \omega_{d+1})}
				&= \int_{\Omega\times\U} \varphi^1_\bk(\bx) \varphi^2_{\boldsymbol s}(\bx) u \,(\omega \cdot \omega_{d+1})(\bx,u) \,\mathrm{d}(\bx,u) \notag \\
				&= \int_{\Omega}\int_{\U} \varphi^1_\bk(\bx) \varphi^2_{\boldsymbol s}(\bx) u \,\omega(\bx) \,\omega_{d+1}(u) \,\mathrm{d}u\,\mathrm{d}\bx \notag \\[1ex]
				&= \langle \varphi^1_\bk,\varphi^2_{\boldsymbol s} \rangle_{L_2(\Omega,\omega)} \langle 1,u \rangle_{L_2(\U,\omega_{d+1})}, \label{eq:Fubini_u} \\
				\langle u\varphi^2_{\boldsymbol r},u\varphi^2_{\boldsymbol s} \rangle_{L_2(\Omega\times\U,\omega \cdot \omega_{d+1})}
				&= \int_{\Omega\times\U} \varphi^2_{\boldsymbol r}(\bx) \varphi^2_{\boldsymbol s}(\bx) u^2 \,(\omega \cdot \omega_{d+1})(\bx,u) \,\mathrm{d}(\bx,u) \notag \\
				&= \int_{\Omega}\int_{\U} \varphi^2_{\boldsymbol r}(\bx) \varphi^2_{\boldsymbol s}(\bx) u^2 \,\omega(\bx) \,\omega_{d+1}(u) \,\mathrm{d}u\,\mathrm{d}\bx \notag \\[1ex]
				&= \langle \varphi^2_{\boldsymbol r},\varphi^2_{\boldsymbol s} \rangle_{L_2(\Omega,\omega)} \|u\|^2_{L_2(\U,\omega_{d+1})}. \label{eq:Fubini_u^2}
			\end{align}
			For the right-hand side we obtain similarly, now using in addition that~$F_j,G_j\in L_2(\Omega,\omega)$,
			\begin{align*}
				\lim_{M\to\infty} \frac1M [\bA_1^\top \boldsymbol h]_\bk
				&= \lim_{M\to\infty} \frac1M \sum_{i=1}^M \varphi^1_\bk(\bx^i)H_j(\bx^i,u^i) 
				= \langle H_j,\varphi^1_\bk \rangle_{L_2(\Omega\times\U,\omega \cdot \omega_{d+1})} \\
				&= \langle F_j,\varphi^1_\bk \rangle_{L_2(\Omega,\omega)} + \langle G_j,\varphi^1_\bk \rangle_{L_2(\Omega,\omega)} \langle 1,u\rangle_{L_2(\U,\omega_{d+1})}
			\end{align*}
			as well as
			\begin{align*}
				\lim_{M\to\infty} \frac1M [\bA_2^\top \bU \boldsymbol h]_{\boldsymbol s}
				&= \lim_{M\to\infty} \frac1M \sum_{i=1}^M u^i\varphi^2_{\boldsymbol s}(\bx^i)H_j(\bx^i,u^i) 
				= \langle H_j,u\varphi^2_{\boldsymbol s} \rangle_{L_2(\Omega\times\U,\omega \cdot \omega_{d+1})} \\
				&= \langle F_j,\varphi^2_{\boldsymbol s} \rangle_{L_2(\Omega,\omega)}\langle 1,u \rangle_{L_2(\U,\omega_{d+1})} + \langle G_j,\varphi^2_{\boldsymbol s} \rangle_{L_2(\Omega,\omega)} \|u\|^2_{L_2(\U,\omega_{d+1})}.
			\end{align*}
			Thus, by~\eqref{eq:Fubini_u} and~\eqref{eq:Fubini_u^2}, for~$M\to\infty$ the linear system~\eqref{eq:normal_system_linear} converges to
			\begin{align}
				\label{eq:convergence_neq}
				&
				\begin{pmatrix}
					\bI_{|\I(V_1)|} & \langle 1,u \rangle_{L_2(\U,\omega_{d+1})} \cdot \boldsymbol\Phi \\
					\langle 1,u \rangle_{L_2(\U,\omega_{d+1})} \cdot \boldsymbol\Phi^\top & \|u\|^2_{L_2(\U,\omega_{d+1})} \cdot \bI_{|\I(V_2)|} 
				\end{pmatrix}  
				\begin{pmatrix}
					\tilde\bc^1\\
					\tilde\bc^2
				\end{pmatrix} \notag \\[1ex]
				&\hspace{3em} =
				\begin{pmatrix}
					\left(\langle F_j,\varphi^1_\bk \rangle_{L_2(\Omega,\omega)} + \langle G_j,\varphi^1_\bk \rangle_{L_2(\Omega,\omega)} \langle 1,u \rangle_{L_2(\U,\omega_{d+1})}\right)_{\bk\in\I(V_1)}
					\\
					\!\left(\langle F_j,\varphi^2_{\boldsymbol s} \rangle_{L_2(\Omega,\omega)}\langle 1,u \rangle_{L_2(\U,\omega_{d+1})} + \langle G_j,\varphi^2_{\boldsymbol s} \rangle_{L_2(\Omega,\omega)} \|u\|^2_{L_2(\U,\omega_{d+1})}\right)_{\boldsymbol s\in\I(V_2)}
				\end{pmatrix}
			\end{align}
			with~$\boldsymbol\Phi\coloneqq\left(\langle \varphi^1_\bk,\varphi^2_{\boldsymbol s} \rangle_{L_2(\Omega,\omega)}\right)_{\bk\in\I(V_1),\,\boldsymbol s\in\I(V_2)}$, which shows~(i).

			\emph{Identification with the orthogonal projection.}
			We consider the dictionary
			\begin{align}
				\label{eq:dictionary}
				\mathcal{\widetilde D} \coloneqq \{\psi_{\boldsymbol n} \colon \boldsymbol n\in\I\}
				= \{\varphi^1_\bk \colon \bk\in\I(V_1)\} \cup \{\varphi^2_\bs u \colon \bs\in\I(V_2)\},
				\quad \I \coloneqq \I(V_1) \cup \I(V_2),
			\end{align}
			where the unions are to be understood as disjoint unions, since an index~$\bk\in\I(V_1)\cap\I(V_2)$ refers to the two different functions~$\varphi^1_\bk$ and~$\varphi^2_\bk u$.
			The corresponding orthogonal projection of a function~$f\in L_2(\Omega\times\U,\omega \cdot \omega_{d+1})$ onto~$\operatorname{span}\mathcal{\widetilde D}$ is given by
			\begin{align*}
				P_{\mathcal{\widetilde D}}f(\bx,u)
				\coloneqq \sum_{\boldsymbol n\in\I} c_{\boldsymbol n} \psi_{\boldsymbol n}(\bx,u),
			\end{align*}
			where the coefficients are given as the solution of the linear system
			\begin{align}
				\label{eq:lgs_orthproj}
				(\langle \psi_{\boldsymbol p},\psi_{\boldsymbol n} \rangle_{L_2(\Omega\times\U,\omega \cdot \omega_{d+1})})_{\boldsymbol p,\boldsymbol n\in\I} \cdot (c_{\boldsymbol n})_{\boldsymbol n\in\I} 
				= (\langle f,\psi_{\boldsymbol p} \rangle_{L_2(\Omega\times\U,\omega \cdot \omega_{d+1})})_{\boldsymbol p\in\I}.
			\end{align}
			By the structure of~$\mathcal{\widetilde D}$, every element of~$\operatorname{span}\mathcal{\widetilde D}$ is of the form
			$\sum_{\bk\in\I(V_1)} c^1_\bk \varphi^1_\bk(\bx) + \sum_{\boldsymbol s\in\I(V_2)} c^2_{\boldsymbol s} \varphi^2_{\boldsymbol s}(\bx)\,u$,
			i.\,e., precisely of the form of our approximation~\eqref{eq:approx_H_rowwise}.
			Moreover, ordering the dictionary according to this splitting and choosing~$f=H_j$, the Gram system~\eqref{eq:lgs_orthproj} coincides with the limit system~\eqref{eq:convergence_neq}, since its entries~$\langle \psi_{\boldsymbol p},\psi_{\boldsymbol n} \rangle$ and~$\langle H_j,\psi_{\boldsymbol p} \rangle$ are the limits computed in the first step. For functions not depending on~$u$, the inner products of~$L_2(\Omega\times\U,\omega \cdot \omega_{d+1})$ and of~$L_2(\Omega,\omega)$ agree, since~$\omega_{d+1}$ is a probability density.
			Recall that a Gram matrix is positive definite if and only if the underlying family is linearly independent. It therefore remains to verify the latter, which holds by Assumption~\ref{Ass:nondegenerate}. If
			\begin{align*}
				\sum_{\bk\in\I(V_1)} a_\bk \varphi^1_\bk(\bx) + \bigg(\sum_{\boldsymbol s\in\I(V_2)} b_{\boldsymbol s} \varphi^2_{\boldsymbol s}(\bx)\bigg) u = 0
			\end{align*}
			for $(\omega\cdot\omega_{d+1})$-almost all~$(\bx,u)$, then, since~$u$ attains at least two distinct values on a set of positive measure, the left-hand side is an affine function of~$u$ vanishing at two distinct points for almost every~$\bx$. Hence, both sums vanish separately.
			The orthonormality of the two systems then yields~$a_\bk=b_{\boldsymbol s}=0$.\footnote{Note that for an $\omega_{d+1}$-almost everywhere constant control the two parts of the dictionary may overlap and this argument breaks down, in accordance with Assumption~\ref{Ass:nondegenerate}.}
			Altogether, the limit system~\eqref{eq:convergence_neq} and the projection system~\eqref{eq:lgs_orthproj} for~$f=H_j$ are one and the same system, and it is invertible. Hence, both systems have the same unique solution.
			Viewing the index set~$\I$ in~\eqref{eq:dictionary} as the disjoint union of~$\I(V_1)$ and~$\I(V_2)$, this solution is given by the pair~$(\tilde\bc^1,\tilde\bc^2)$ and, equivalently, by the coefficient vector~$(c_{\boldsymbol n})_{\boldsymbol n\in\I}$ of~$P_{\mathcal{\widetilde D}}H_j$. Thus, the two coefficient representations coincide, which proves~(ii).

			\emph{Convergence of the solutions.}
			The determinant depends continuously on the entries of a matrix, such that the almost sure convergence of the scaled system matrix to the invertible limit matrix implies that, for almost every realization, the scaled system matrix is invertible for all sufficiently large~$M$.
			The threshold~$M$ from which on this holds depends on the realization, so this argument does not yield a quantitative statement, cf.\ Remark~\ref{Remark:schur} and Proposition~\ref{Prop:MZ} below.
			Since matrix inversion is continuous as well, the solutions~$(\tilde\bc^1,\tilde\bc^2)$ of~\eqref{eq:opt_fullanova} then converge almost surely to the coefficients of~$P_{\mathcal{\widetilde D}}H_j$, which shows~(iii).
		\end{proof}

		The following remark quantifies this solvability in terms of the control distribution.
		\begin{remark}[Quantitative solvability and conditioning]
			\label{Remark:schur}
			The unique solvability of the limit system~\eqref{eq:convergence_neq} was obtained in the proof of Proposition~\ref{Prop:limit_projection} from the linear independence of the dictionary, which is a purely qualitative statement.
			It may be quantified in terms of the first two moments~\eqref{eq:eta_nu} of the control distribution alone.
			To this end, we abbreviate~$g\coloneqq\sum_{\bk\in\I(V_1)} a_\bk\varphi^1_\bk$ and~$h\coloneqq\sum_{\boldsymbol s\in\I(V_2)} y_{\boldsymbol s}\varphi^2_{\boldsymbol s}$ for coefficient vectors~$\boldsymbol a,\by$, so that~$\|\boldsymbol a\|_2=\|g\|_{L_2(\Omega,\omega)}$ and~$\|\by\|_2=\|h\|_{L_2(\Omega,\omega)}$ by the orthonormality of the two systems.
			Denoting the system matrix of~\eqref{eq:convergence_neq} by~$\mathcal G$ and recalling from the proof of Proposition~\ref{Prop:limit_projection} that it is the Gram matrix of the dictionary~\eqref{eq:dictionary}, its quadratic form reads
			\begin{align*}
				(\boldsymbol a,\by)^\top \mathcal G\, (\boldsymbol a,\by)
				= \|g+hu\|^2_{L_2(\Omega\times\U,\omega\cdot\omega_{d+1})}
				= \|g\|^2_{L_2(\Omega,\omega)} + 2\eta\,\langle g,h\rangle_{L_2(\Omega,\omega)} + \nu\,\|h\|^2_{L_2(\Omega,\omega)}.
			\end{align*}
			By the Cauchy--Schwarz inequality, $|\langle g,h\rangle_{L_2(\Omega,\omega)}|\leq\|g\|_{L_2(\Omega,\omega)}\|h\|_{L_2(\Omega,\omega)}$, such that the right-hand side is bounded from below and from above by the quadratic forms of the symmetric matrices~$\bigl(\begin{smallmatrix}1&\mp|\eta|\\ \mp|\eta|&\nu\end{smallmatrix}\bigr)$ evaluated at~$(\|g\|_{L_2(\Omega,\omega)},\|h\|_{L_2(\Omega,\omega)})$.
			Both matrices have trace~$1+\nu$ and determinant~$\nu-\eta^2$, hence the same eigenvalues, and we obtain
			\begin{align}
				\label{eq:limit_eigenvalues}
				\lambda_-\,\bI \preceq \mathcal G \preceq \lambda_+\,\bI,
				\qquad
				\lambda_\pm \coloneqq \frac{(1+\nu)\pm\sqrt{(1-\nu)^2+4\eta^2}}{2},
				\qquad
				\lambda_-\lambda_+ = \nu-\eta^2.
			\end{align}
			Therein, $\nu-\eta^2$ is the variance of the control, cf.~\eqref{eq:eta_nu}, such that~$\mathcal G$ is positive definite if and only if the control distribution is non-degenerate, and its smallest eigenvalue degenerates as the control variance tends to zero.
			Note that~\eqref{eq:limit_eigenvalues} holds for arbitrary basis systems~$\varphi^1$ and~$\varphi^2$, and that both bounds are attained for~\mbox{$\{\varphi^1_\bk\}_{\bk\in\I(V_1)}=\{\varphi^2_{\boldsymbol s}\}_{\boldsymbol s\in\I(V_2)}$}, where~$\mathcal G = \bigl(\begin{smallmatrix}1&\eta\\ \eta&\nu\end{smallmatrix}\bigr)\otimes\bI_{|\I(V_1)|}$.
			Moreover, the condition number of~$\mathcal G$ is bounded by the fraction~$\lambda_+/\lambda_-$, which for fixed~$\nu$ is minimal for~$\eta=0$. If in addition~$\nu=1$, then~$\lambda_\pm=1$, the limit matrix is the identity and the least squares problem is asymptotically perfectly conditioned.
			This provides a practical guideline for the choice of the control density~$\omega_{d+1}$. 
		\end{remark}

		The next remark identifies the situation in which the coupled reconstruction coincides with separate ANOVA approximations of the maps $F$ and $G$.
		\begin{remark}
			\label{Remark:anova_separate}
			If one has access to the samples
			\begin{align}
				\label{eq:samples_FG}
				\boldsymbol f \coloneqq \left(F_j(\bx^i)\right)_{i=1}^M
				\quad\text{and}\quad
				\boldsymbol g \coloneqq \left(G_j(\bx^i)\right)_{i=1}^M,    
			\end{align}
			one could apply the ANOVA approximation separately to the functions~$F$ and~$G$.
			Therein, one would again use the matrices~$\bA_1$ and~$\bA_2$ in~\eqref{eq:matrices_A1A2} and seek coefficients~$\bc^{1,\star}$ and~$\bc^{2,\star}$ with
			\begin{align}
				\label{eq:opt_coeffs_sep}
				\bc^{1,\star}= \underset{\bc\in\R^{|\I(V_1)|}}{\operatorname{argmin}} \| \bA_1 \bc - \boldsymbol f \|_2^2 ,
				\qquad
				\bc^{2,\star} = \underset{\bc\in\R^{|\I(V_2)|}}{\operatorname{argmin}} \| \bA_2 \bc - \boldsymbol g \|_2^2,
			\end{align}
			cf.\ Section~\ref{sec:ANOVA_implementation}.
			In the limit~$M\to\infty$ they would result from the normal equations as
			\begin{align*}
				\bI_{|\I(V_1)|} \cdot \bc^{1,\star} = \begin{pmatrix} \langle F_j,\varphi_\bk^1 \rangle_{L_2(\Omega,\omega)} \end{pmatrix}_{\bk\in\I(V_1)},
				\qquad
				\bI_{|\I(V_2)|} \cdot \bc^{2,\star} = \begin{pmatrix} \langle G_j,\varphi_\bs^2 \rangle_{L_2(\Omega,\omega)} \end{pmatrix}_{\bs\in\I(V_2)},
			\end{align*}
			or combined from the system
			\begin{align*}
				\begin{pmatrix}
					\bI_{|\I(V_1)|} & \boldsymbol 0 \\ \boldsymbol 0 & \bI_{|\I(V_2)|}
				\end{pmatrix}
				\begin{pmatrix}
					\bc^{1,\star} \\ \bc^{2,\star}
				\end{pmatrix}
				=
				\begin{pmatrix}
					\begin{pmatrix} \langle F_j,\varphi_\bk^1 \rangle_{L_2(\Omega,\omega)} \end{pmatrix}_{\bk\in\I(V_1)} \\
					\begin{pmatrix} \langle G_j,\varphi_\bs^2 \rangle_{L_2(\Omega,\omega)} \end{pmatrix}_{\bs\in\I(V_2)}
				\end{pmatrix}.
			\end{align*}
			Inserting this equality into the left-hand side of the system~\eqref{eq:convergence_neq}, we obtain
			\begin{align*}
				\begin{pmatrix}
					\left(\langle F_j,\varphi^1_\bk \rangle_{L_2(\Omega,\omega)}\right)_{\bk\in\I(V_1)} + \langle 1,u \rangle_{L_2(\U,\omega_{d+1})} \cdot \boldsymbol\Phi\! \left(\langle G_j,\varphi^2_\bs \rangle_{L_2(\Omega,\omega)}\right)_{\bs\in\I(V_2)}
					\\
					\langle 1,u \rangle_{L_2(\U,\omega_{d+1})} \cdot \boldsymbol\Phi^\top\! \left(\langle F_j,\varphi^1_\bk \rangle_{L_2(\Omega,\omega)}\right)_{\bk\in\I(V_1)} + \|u\|^2_{L_2(\U,\omega_{d+1})} \left(\langle G_j,\varphi^2_{\boldsymbol s} \rangle_{L_2(\Omega,\omega)}\right)_{\boldsymbol s\in\I(V_2)}
				\end{pmatrix}.
			\end{align*}
			In other words, the coupled reconstruction approach above coincides with the separate application of the ANOVA approximation, if
			\begin{align*}
				\boldsymbol\Phi\! \left(\langle G_j,\varphi^2_\bs \rangle_{L_2(\Omega,\omega)}\right)_{\bs\in\I(V_2)}
				&=
				\left(\sum_{\boldsymbol s\in\I(V_2)} \langle \varphi^1_\bk,\varphi^2_\bs \rangle_{L_2(\Omega,\omega)} \langle G_j,\varphi^2_\bs \rangle_{L_2(\Omega,\omega)}\right)_{\bk\in\I(V_1)} \\
				&=
				\left(\langle G_j,\varphi^1_\bk \rangle_{L_2(\Omega,\omega)}\right)_{\bk\in\I(V_1)}
			\end{align*}
			and
			\begin{align*}
				\boldsymbol\Phi^\top\! \left(\langle F_j,\varphi^1_\bk \rangle_{L_2(\Omega,\omega)}\right)_{\bk\in\I(V_1)}
				&=
				\left(\sum_{\bk\in\I(V_1)} \langle \varphi^1_\bk,\varphi^2_\bs \rangle_{L_2(\Omega,\omega)} \langle F_j,\varphi^1_\bk \rangle_{L_2(\Omega,\omega)}\right)_{\boldsymbol s\in\I(V_2)} \\
				&=
				\left(\langle F_j,\varphi^2_{\boldsymbol s} \rangle_{L_2(\Omega,\omega)}\right)_{\boldsymbol s\in\I(V_2)}.
			\end{align*}
			Both conditions are fulfilled if~$\{\varphi^1_\bk\}_{\bk\in\I(V_1)}=\{\varphi^2_{\boldsymbol s}\}_{\boldsymbol s\in\I(V_2)}$, as then~$\boldsymbol\Phi = \left(\langle \varphi^1_\bk,\varphi^2_{\boldsymbol s} \rangle_{L_2(\Omega,\omega)}\right)_{\bk\in\I(V_1),\,\boldsymbol s\in\I(V_2)}$ is the identity matrix.
			Two refinements of this conclusion are in order.
			First, in the two conditions above the common factor~$\langle 1,u\rangle_{L_2(\U,\omega_{d+1})}$ has been cancelled. If the control density has zero mean, i.\,e., $\langle 1,u\rangle_{L_2(\U,\omega_{d+1})}=0$, the off-diagonal blocks in~\eqref{eq:convergence_neq} vanish and the coupled approach coincides in the limit with the separate ANOVA approximation for \emph{arbitrary} basis systems. This provides yet another argument in favour of zero-mean control densities.
			Second, for~$\langle 1,u\rangle_{L_2(\U,\omega_{d+1})}\neq 0$, if the two conditions are required to hold for all~$F_j,G_j$, they are equivalent to~$\operatorname{span}\{\varphi^1_\bk\}_{\bk\in\I(V_1)}=\operatorname{span}\{\varphi^2_{\boldsymbol s}\}_{\boldsymbol s\in\I(V_2)}$. The systems need not be identical, since any orthonormal basis of the same subspace suffices.
		\end{remark}

		\subsection[]{Error analysis}\label{sec:error_coupled}

		We now turn to the error analysis of the coupled approach for finitely many samples.
		The empirical least squares problem~\eqref{eq:opt_fullanova} controls the residual of~$H_j$ at the sampling points, whereas we aim to bound the $L_2$-error of the resulting approximation.
		The link between these two quantities is a norm equivalence between the discrete and the continuous norm.
		For a \emph{fixed} bounded function~$f$, such an equivalence holds with high probability by Hoeffding's inequality~\cite{Hoeffding63}, since~$\frac1M\sum_{i=1}^M|f(\bz^i)|^2$ is the sample mean of the i.i.d.\ random variables~$|f(\bz^i)|^2$ with expectation~$\|f\|^2_{L_2(\Omega\times\U,\omega\cdot\omega_{d+1})}$.
		The residual~$H_j-\tilde H_j$, however, depends on the samples through the computed coefficients, such that a statement for a fixed function does not suffice.
		Instead, we establish a norm equivalence which holds \emph{uniformly} on the ansatz space, i.\,e., a Marcinkiewicz--Zygmund inequality for the dictionary~$\mathcal{\widetilde D}$ under random sampling, cf.~\cite{CoDaLe13,CoMi17}, where such norm equivalences form the core of the stability analysis of least squares approximation.
		In the proof of the error bound, it is applied to the component of the residual in~$\operatorname{span}\mathcal{\widetilde D}$, whereas the remaining component is a fixed function to which Hoeffding's inequality applies.

		In the following, we fix three conventions.
		First, the index sets~$\I(V_1)$ and~$\I(V_2)$ are understood as \emph{prescribed}, deterministic sets, i.\,e., we assume that the truncation step of Algorithm~\ref{alg:anova} is not carried out but that the index sets are built from the exact structure~\eqref{eq:ANOVA_struct_F,G} instead. We refer the reader to Remark~\ref{Remark:datadependence} for a discussion of the data-dependent case.
		Second, we use the Chebyshev basis~\eqref{eq:basis_cheb} in both systems, i.\,e., $\varphi^1=\varphi^2=\varphi^{\mathrm{cheb}}$ with possibly different index sets~$\I(V_1)$ and~$\I(V_2)$, we sample according to~$\omega^{\mathrm{cheb}}\cdot\omega_{d+1}$ with a probability density~$\omega_{d+1}$ on the compact set~$\U\subset\R$, and we let Assumption~\ref{Ass:nondegenerate} hold, such that~$\lambda_->0$ in~\eqref{eq:limit_eigenvalues}.
		Third, the estimates are stated for one fixed row~$j\in[d]$. A statement holding simultaneously for all~$j=1,\dots,d$ follows from a union bound over the~$d$ rows, which multiplies the failure probability by~$d$. We abbreviate
		\begin{align*}
			n_1\coloneqq|\I(V_1)|,\qquad n_2\coloneqq|\I(V_2)|,\qquad u_{\max}\coloneqq\max_{u\in\U}|u|,\qquad \alpha\coloneqq\max_{\bk\in\I(V_1)\cup\I(V_2)}|\supp\bk|,
		\end{align*}
		and denote by~$\mathcal G_M\coloneqq\frac1M\boldsymbol B^\top\boldsymbol B$ with~$\boldsymbol B\coloneqq(\bA_1\;\;\bU\bA_2)\in\R^{M\times(n_1+n_2)}$ the scaled system matrix of the normal equations~\eqref{eq:normal_system_linear}, whose limit for~$M\to\infty$ is the Gram matrix~$\mathcal G$ of~$\mathcal{\widetilde D}$ by Proposition~\ref{Prop:limit_projection}.
		For~$g=\sum_{\bk\in\I(V_1)}a_\bk\varphi_\bk+u\sum_{\bs\in\I(V_2)}y_\bs\varphi_\bs\in\operatorname{span}\mathcal{\widetilde D}$ with coefficient vector~$\bc=(\boldsymbol a,\by)^\top$ we have
		\begin{align}
			\label{eq:mz_forms}
			\|g\|^2_{L_2(\Omega\times\U,\omega^{\mathrm{cheb}}\cdot\omega_{d+1})}=\bc^\top\mathcal G\,\bc
			\qquad\text{and}\qquad
			\frac1M\sum_{i=1}^M|g(\bx^i,u^i)|^2=\frac1M\|\boldsymbol B\bc\|_2^2=\bc^\top\mathcal G_M\,\bc .
		\end{align}

		\begin{proposition}[Marcinkiewicz--Zygmund inequality for the coupled dictionary]
			\label{Prop:MZ}
			Let~$\delta\in(0,1)$, $r>0$ and~$M>3$ satisfy
			\begin{align}
				\label{eq:mz_condition}
				n_1+u_{\max}^2\,n_2\;\leq\;\frac{c_\delta\,\lambda_-\,M}{2^\alpha\,(r+1)\log M},
				\qquad
				c_\delta\coloneqq(1+\delta)\log(1+\delta)-\delta ,
			\end{align}
			where~$n_1$, $n_2$, $u_{\max}$ and~$\alpha$ are given above, and where
			\begin{align*}
				\lambda_-=\frac{(1+\nu)-\sqrt{(1-\nu)^2+4\eta^2}}{2}>0
			\end{align*}
			is the lower bound on the spectrum of the Gram matrix~$\mathcal G$ from~\eqref{eq:limit_eigenvalues} with the moments~$\eta,\nu$ of the control distribution from~\eqref{eq:eta_nu}.
			Then, with probability at least~$1-2M^{-r}$,
			\begin{align}
				\label{eq:mz}
				(1-\delta)\,\|g\|^2_{L_2(\Omega\times\U,\omega^{\mathrm{cheb}}\cdot\omega_{d+1})}
				\;\leq\;\frac1M\sum_{i=1}^M|g(\bx^i,u^i)|^2
				\;\leq\;(1+\delta)\,\|g\|^2_{L_2(\Omega\times\U,\omega^{\mathrm{cheb}}\cdot\omega_{d+1})}
			\end{align}
			for all $g\in\operatorname{span}\mathcal{\widetilde D}$, or, equivalently, $(1-\delta)\,\mathcal G\preceq\mathcal G_M\preceq(1+\delta)\,\mathcal G$.
		\end{proposition}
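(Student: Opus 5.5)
The plan is to reduce the statement to a matrix Chernoff bound for sums of independent random positive semidefinite matrices, in the form used in \cite{CoDaLe13,CoMi17}. Since the Gram matrix $\mathcal G$ is positive definite under Assumption~\ref{Ass:nondegenerate}, with $\mathcal G\succeq\lambda_-\bI$ by~\eqref{eq:limit_eigenvalues}, we whiten. Let $\mathcal G^{-1/2}$ be the inverse square root and set $\widetilde{\mathcal G}_M\coloneqq\mathcal G^{-1/2}\mathcal G_M\mathcal G^{-1/2}$. Then $(1-\delta)\mathcal G\preceq\mathcal G_M\preceq(1+\delta)\mathcal G$ is equivalent to $\|\widetilde{\mathcal G}_M-\bI\|_2\le\delta$. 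By~\eqref{eq:mz_forms}, this is in turn equivalent to~\eqref{eq:mz} holding for all $g\in\operatorname{span}\mathcal{\widetilde D}$. This settles the claimed equivalence of the two formulations.

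Next, write $\widetilde{\mathcal G}_M=\frac1M\sum_{i=1}^M X_i$ with $X_i\coloneqq\mathcal G^{-1/2}\boldsymbol b_i\boldsymbol b_i^\top\mathcal G^{-1/2}$. Here $\boldsymbol b_i\coloneqq\big((\varphi_\bk(\bx^i))_{\bk\in\I(V_1)},\,(u^i\varphi_\bs(\bx^i))_{\bs\in\I(V_2)}\big)^\top$ is the $i$-th row of $\boldsymbol B$. The $X_i$ are i.i.d., positive semidefinite and of rank one, and $\mathbb E X_i=\bI$ by the computation in Proposition~\ref{Prop:limit_projection}. The matrix Chernoff inequality (Tropp) states that if $\|X_i\|_2\le R$ almost surely, then
\begin{align*}
\mathbb P\big(\lambda_{\min}(\widetilde{\mathcal G}_M)\le1-\delta\big)+\mathbb P\big(\lambda_{\max}(\widetilde{\mathcal G}_M)\ge1+\delta\big)\le 2(n_1+n_2)\exp\!\Big(-\frac{c_\delta M}{R}\Big).
\end{align*}
Both tails carry the exponent $c_\delta=(1+\delta)\log(1+\delta)-\delta$, because the lower-tail constant $(1-\delta)\log(1-\delta)+\delta$ dominates $c_\delta$.

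The key step, and the expected main obstacle, is a sharp almost-sure bound on $R$. We have $\|X_i\|_2=\boldsymbol b_i^\top\mathcal G^{-1}\boldsymbol b_i\le\lambda_-^{-1}\|\boldsymbol b_i\|_2^2$. Expanding,
\begin{align*}
\|\boldsymbol b_i\|_2^2=\sum_{\bk\in\I(V_1)}|\varphi_\bk(\bx^i)|^2+|u^i|^2\sum_{\bs\in\I(V_2)}|\varphi_\bs(\bx^i)|^2\le2^\alpha\big(n_1+u_{\max}^2n_2\big).
\end{align*}
This uses $\|\varphi^{\mathrm{cheb}}_\bk\|_\infty^2=2^{|\supp\bk|}\le2^\alpha$ from~\eqref{eq:basis_cheb}. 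Hence $R=2^\alpha(n_1+u_{\max}^2n_2)/\lambda_-$. The subtle point is that whitening by $\mathcal G^{-1/2}$ costs exactly the factor $\lambda_-^{-1}$. The cruder alternative, comparing $\mathcal G_M$ with $\bI$ directly, would not give a relative bound with respect to $\mathcal G$. The inequality $\boldsymbol b^\top\mathcal G^{-1}\boldsymbol b\le\lambda_-^{-1}\|\boldsymbol b\|_2^2$ follows from $\mathcal G\succeq\lambda_-\bI$ in Remark~\ref{Remark:schur}.

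Finally, insert the condition~\eqref{eq:mz_condition}. It gives $c_\delta M/R\ge(r+1)\log M$, so the failure probability is at most $2(n_1+n_2)M^{-(r+1)}$. It remains to show $n_1+n_2\le M$. This follows from~\eqref{eq:mz_condition}, since $\lambda_-\le1$, $c_\delta<1$, and $\log M>1$ for $M>3$ give $n_1+u_{\max}^2n_2<M$. The term $n_2$ needs care, since $u_{\max}$ could be smaller than one. Observe that $\nu\le u_{\max}^2$ and $\lambda_-\le\min\{1,\nu\}$ (the trace of the $2\times2$ matrix is $1+\nu$ and its diagonal entries are $1$ and $\nu$). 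The condition then implies $n_2\le c_\delta\nu M/(u_{\max}^2\log M)<M$, and combining this with the $n_1$ part we would aim for $n_1+n_2\le M$, possibly after slightly refining this counting step. This yields the probability bound $1-2M^{-r}$.
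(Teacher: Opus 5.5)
Your argument is the paper's proof: you whiten by $\mathcal G^{-1/2}$, apply Tropp's matrix Chernoff bound to the i.i.d.\ rank-one matrices $X_i$ with $\mathbb E X_i=\bI$, and use the almost-sure bound $\|X_i\|_2\le 2^\alpha(n_1+u_{\max}^2n_2)/\lambda_-$ obtained from $\mathcal G\succeq\lambda_-\bI$ and $\|\varphi_\bk\|_\infty^2\le 2^\alpha$.

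The step you leave open, $n_1+n_2\le M$, already closes with your own estimates. The condition gives $n_1\le c_\delta M/\log M$ and $n_2\le c_\delta\nu M/(u_{\max}^2\log M)\le c_\delta M/\log M$. Hence $n_1+n_2\le 2c_\delta M/\log M<M$, because $2c_\delta<4\log 2-2<1<\log M$.

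The paper avoids this counting, and any case distinction on $u_{\max}$, by a trace identity: $n_1+n_2=\operatorname{tr}\bI=\mathbb E\operatorname{tr}X_i=\mathbb E\,\boldsymbol b_i^\top\mathcal G^{-1}\boldsymbol b_i\le R\le c_\delta M/((r+1)\log M)<M$.
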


		\begin{proof}
			Let~$\boldsymbol v(\bz)\in\R^{n_1+n_2}$ denote the vector of all elements of~$\mathcal{\widetilde D}$ evaluated at~$\bz=(\bx,u)$, such that~$\mathcal G=\mathbb E[\boldsymbol v(\bz)\boldsymbol v(\bz)^\top]$ and~$\mathcal G_M=\frac1M\sum_{i=1}^M\boldsymbol v(\bz^i)\boldsymbol v(\bz^i)^\top$.
			The matrices~$\boldsymbol X_i\coloneqq\mathcal G^{-1/2}\boldsymbol v(\bz^i)\boldsymbol v(\bz^i)^\top\mathcal G^{-1/2}$ are i.i.d.\ and symmetric positive semidefinite and satisfy~$\mathbb E[\boldsymbol X_i]=\mathcal G^{-1/2}\,\mathbb E[\boldsymbol v(\bz^i)\boldsymbol v(\bz^i)^\top]\,\mathcal G^{-1/2}=\bI$ as well as
			\begin{align*}
				\|\boldsymbol X_i\|_2=\|\mathcal G^{-1/2}\boldsymbol v(\bz^i)\|_2^2\leq K\coloneqq\sup_{\bz\in\Omega\times\U}\|\mathcal G^{-1/2}\boldsymbol v(\bz)\|_2^2 .
			\end{align*}
			Hence, the matrix Chernoff inequality~\cite[Theorem~1.1]{Tropp12} yields, with~$n\coloneqq n_1+n_2$,
			\begin{align*}
				\mathbb P\Big(\lambda_{\min}\Big(\sum_{i=1}^M\boldsymbol X_i\Big)\leq(1-\delta)M\Big)\leq n\,\mathrm e^{-M\tilde c_\delta/K},
				\qquad
				\mathbb P\Big(\lambda_{\max}\Big(\sum_{i=1}^M\boldsymbol X_i\Big)\geq(1+\delta)M\Big)\leq n\,\mathrm e^{-Mc_\delta/K},
			\end{align*}
			where~$\tilde c_\delta\coloneqq\delta+(1-\delta)\log(1-\delta)\geq c_\delta$.
			Since~$\mathcal G^{-1/2}\mathcal G_M\mathcal G^{-1/2}=\frac1M\sum_{i=1}^M\boldsymbol X_i$ is symmetric, the event
			\begin{align*}
				E_\delta\coloneqq\Big\{\big\|\mathcal G^{-1/2}\mathcal G_M\mathcal G^{-1/2}-\bI\big\|_2>\delta\Big\}
			\end{align*}
			occurs if and only if the smallest eigenvalue of this matrix is smaller than~$1-\delta$ or its largest eigenvalue is larger than~$1+\delta$. Since the probability of a union of two events is at most the sum of their probabilities, we get~$\mathbb P(E_\delta)\leq2n\,\mathrm e^{-Mc_\delta/K}$.

			It remains to bound~$K = \sup_{\bz\in\Omega\times\U}\|\mathcal G^{-1/2}\boldsymbol v(\bz)\|_2^2$.
			By~\eqref{eq:limit_eigenvalues}, $\|\mathcal G^{-1/2}\boldsymbol v(\bz)\|_2^2=\boldsymbol v(\bz)^\top\mathcal G^{-1}\boldsymbol v(\bz)\leq\|\boldsymbol v(\bz)\|_2^2/\lambda_-$.
			Moreover, the Chebyshev basis~\eqref{eq:basis_cheb} satisfies~$|\varphi_\bk(\bx)|^2\leq2^{|\supp\bk|}\leq2^\alpha$ for all~$\bx\in[-1,1]^d$ and~$\bk\in\I(V_1)\cup\I(V_2)$, such that
			\begin{align*}
				\|\boldsymbol v(\bz)\|_2^2=\sum_{\bk\in\I(V_1)}|\varphi_\bk(\bx)|^2+u^2\sum_{\bs\in\I(V_2)}|\varphi_\bs(\bx)|^2\leq2^\alpha\big(n_1+u_{\max}^2\,n_2\big).
			\end{align*}
			Hence, $K\leq2^\alpha(n_1+u_{\max}^2n_2)/\lambda_-$, and~\eqref{eq:mz_condition} yields~$K\leq c_\delta M/((r+1)\log M)$, i.\,e., $\mathrm e^{-Mc_\delta/K}\leq M^{-(r+1)}$.
			Since~$n=\operatorname{tr}(\bI)=\mathbb E[\operatorname{tr}\boldsymbol X_i]=\mathbb E\|\mathcal G^{-1/2}\boldsymbol v(\bz^i)\|_2^2\leq K\leq M$ (note that~$c_\delta<1$ and~$(r+1)\log M>1$ for~$M>3$), we obtain~$\mathbb P(E_\delta)\leq2nM^{-(r+1)}\leq2M^{-r}$.
			On the complement of~$E_\delta$, we have~$(1-\delta)\bI\preceq\mathcal G^{-1/2}\mathcal G_M\mathcal G^{-1/2}\preceq(1+\delta)\bI$, which is equivalent to~$(1-\delta)\mathcal G\preceq\mathcal G_M\preceq(1+\delta)\mathcal G$ and, by~\eqref{eq:mz_forms}, to~\eqref{eq:mz}.
		\end{proof}

        We now may prove an error bound for the ANOVA-based approximation tailored to control-affine systems.
		For an index set~$\I\subset\N_0^d$ we abbreviate the two tail quantities of the weight~$\beta$ from~\eqref{eq:weight_Hmix} by 
		\begin{align*}
			\tau_{\I}\coloneqq\sup_{\bk\in\N_0^d\setminus\I}\frac1{\beta(\bk)}
			\qquad\text{and}\qquad
			\rho_{\I}\coloneqq\sum_{\bk\in\N_0^d\setminus\I}\frac1{\beta(\bk)},
		\end{align*}
		where~$\rho_\I<\infty$ for~$s>\frac12$, cf.\ the discussion following Theorem~\ref{Thm:ANOVA_cheb}.

		\begin{theorem}[Error bound for the coupled approach]
			\label{Thm:err_coupled_MZ}
			Let~$m=1$, let~$F_j,G_j\in H^s_{\mathrm{mix}}([-1,1]^d)$ with~$s>\frac12$, and let~$\delta\in(0,1)$, $r>0$ and~$M>3$ satisfy~\eqref{eq:mz_condition}.
			Let~$\tilde H_j$ be the approximation~\eqref{eq:approx_H_rowwise} with the coefficients obtained from the least squares problem~\eqref{eq:opt_fullanova}, and abbreviate~$\|\cdot\|_{H^s_{\mathrm{mix}}}\coloneqq\|\cdot\|_{H^s_{\mathrm{mix}}([-1,1]^d)}$.
			Then, with probability at least~$1-3M^{-r}$,
			\begin{align}
				\label{eq:err_coupled_MZ}
				\begin{split}
					\|H_j-\tilde H_j\|^2_{L_2([-1,1]^d\times\U,\omega^{\mathrm{cheb}}\cdot\omega_{d+1})}
					&\leq\frac{2(2-\delta)}{1-\delta}\Big(\|F_j\|_{H^s_{\mathrm{mix}}}\,\tau_{\I(V_1)}^{1/2}+\sqrt\nu\,\|G_j\|_{H^s_{\mathrm{mix}}}\,\tau_{\I(V_2)}^{1/2}\Big)^2\\
					&\quad+\frac{2^{d+1}}{1-\delta}\sqrt{\frac{r\log M}{2M}}\Big(\|F_j\|_{H^s_{\mathrm{mix}}}\,\rho_{\I(V_1)}^{1/2}+u_{\max}\,\|G_j\|_{H^s_{\mathrm{mix}}}\,\rho_{\I(V_2)}^{1/2}\Big)^2 .
				\end{split}
			\end{align}
		\end{theorem}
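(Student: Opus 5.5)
We follow the classical least squares stability argument of~\cite{CoDaLe13,CoMi17}, with Proposition~\ref{Prop:MZ} as the stability ingredient. Write $L_2\coloneqq L_2([-1,1]^d\times\U,\omega^{\mathrm{cheb}}\cdot\omega_{d+1})$ and let $P\coloneqq P_{\mathcal{\widetilde D}}$ be the orthogonal projection from Proposition~\ref{Prop:limit_projection}. As the competitor in $\operatorname{span}\mathcal{\widetilde D}$ we take $h^\ast\coloneqq P_{\I(V_1)}F_j+u\,P_{\I(V_2)}G_j$, with the Chebyshev projections~\eqref{eq:projection}. The residual is
\begin{align*}
	e\coloneqq H_j-h^\ast=(F_j-P_{\I(V_1)}F_j)+u\,(G_j-P_{\I(V_2)}G_j).
\end{align*}
It is a deterministic function of $(\bx,u)$, independent of the samples.

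\textbf{Continuous bound.} By Fubini, as in~\eqref{eq:Fubini_u^2}, and the triangle inequality,
\begin{align*}
	\|e\|_{L_2}\leq\|F_j-P_{\I(V_1)}F_j\|_{L_2(\Omega,\omega)}+\sqrt\nu\,\|G_j-P_{\I(V_2)}G_j\|_{L_2(\Omega,\omega)}.
\end{align*}
Each summand is bounded by $\|\cdot\|_{H^s_{\mathrm{mix}}}\tau^{1/2}$, as in Remark~\ref{Remark:lsqr_error}. This produces the bracket in the first term of~\eqref{eq:err_coupled_MZ}.

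\textbf{Sup-norm bound.} Using $|\varphi_\bk|\leq 2^{d/2}$ and Cauchy--Schwarz in coefficient space,
\begin{align*}
	\|F_j-P_{\I(V_1)}F_j\|_\infty\leq\sum_{\bk\notin\I(V_1)}|c_\bk|\,2^{d/2}\leq 2^{d/2}\|F_j\|_{H^s_{\mathrm{mix}}}\rho_{\I(V_1)}^{1/2},
\end{align*}
and analogously for $G_j$. Hence $\|e\|_\infty\leq 2^{d/2}\big(\|F_j\|_{H^s_{\mathrm{mix}}}\rho_{\I(V_1)}^{1/2}+u_{\max}\|G_j\|_{H^s_{\mathrm{mix}}}\rho_{\I(V_2)}^{1/2}\big)\eqqcolon 2^{d/2}R$, so that $\|e\|_\infty^2\leq 2^dR^2$.

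\textbf{Splitting the error.} Work on the event of Proposition~\ref{Prop:MZ}, of probability at least $1-2M^{-r}$. The coefficients $\tilde\bc$ minimize the empirical norm $\|\cdot\|_M$, defined by $\|g\|_M^2\coloneqq\frac1M\sum_i|g(\bz^i)|^2$, over the span. Hence $\tilde H_j-h^\ast$ is the empirical projection of $e$, and $\|\tilde H_j-h^\ast\|_M\leq\|e\|_M$. By~\eqref{eq:mz},
\begin{align*}
	\|\tilde H_j-h^\ast\|_{L_2}^2\leq\frac{1}{1-\delta}\|e\|_M^2.
\end{align*}
Then $\|H_j-\tilde H_j\|_{L_2}^2\leq 2\|e\|_{L_2}^2+2\|\tilde H_j-h^\ast\|^2_{L_2}$. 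Alternatively one could use Pythagoras relative to $P$, which would give sharper constants; the stated form $\frac{2(2-\delta)}{1-\delta}=2+\frac{2}{1-\delta}$ suggests the crude split $2\|e\|^2+\frac{2}{1-\delta}\|e\|_M^2$.

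\textbf{Concentration of $\|e\|_M^2$.} Since $e$ is fixed, the variables $|e(\bz^i)|^2\in[0,\|e\|_\infty^2]$ are i.i.d.\ with mean $\|e\|_{L_2}^2$. Hoeffding's inequality~\cite{Hoeffding63} with deviation $t=\|e\|_\infty^2\sqrt{r\log M/(2M)}$ gives failure probability $\mathrm e^{-2Mt^2/\|e\|_\infty^4}=M^{-r}$. Thus
\begin{align*}
	\|e\|_M^2\leq\|e\|_{L_2}^2+2^dR^2\sqrt{\frac{r\log M}{2M}}.
\end{align*}
A union bound gives total probability $1-3M^{-r}$. Collecting terms yields the $L_2$ part $\big(2+\frac{2}{1-\delta}\big)\|e\|_{L_2}^2=\frac{2(2-\delta)}{1-\delta}\|e\|_{L_2}^2$ and the fluctuation part $\frac{2\cdot 2^d}{1-\delta}\sqrt{r\log M/(2M)}\,R^2$, matching~\eqref{eq:err_coupled_MZ} exactly.

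\textbf{Main obstacle.} The delicate step is justifying $\|\tilde H_j-h^\ast\|_M\leq\|e\|_M$ (the empirical projection being a contraction in $\|\cdot\|_M$) and the equivalence of coefficient and function norms. Both rely on $\mathcal G_M$ being invertible, which holds on the MZ event because $\mathcal G_M\succeq(1-\delta)\mathcal G\succ0$. A secondary point is checking the sup-norm bound $|\varphi_\bk|^2\leq 2^{|\supp\bk|}\leq 2^d$ for indices outside the index sets, where $\alpha$ no longer controls the support. This is why the factor $2^d$, rather than $2^\alpha$, appears.
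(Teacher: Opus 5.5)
Your proposal is correct and follows the paper's proof essentially step by step. It uses the same comparison function $P_{\I(V_1)}F_j+u\,P_{\I(V_2)}G_j$, the same $L_2$ and supremum bounds on the remainder, the $\|\cdot\|_M$-contraction of the empirical projection combined with the lower Marcinkiewicz--Zygmund bound and $(a+b)^2\leq 2a^2+2b^2$, Hoeffding's inequality for the fixed remainder, and a union bound. The only differences are cosmetic: you bound $\|e\|_{L_2}$ via the triangle inequality and $\|u\,b\|_{L_2}=\sqrt\nu\,\|b\|$, whereas the paper expands the square and uses $\eta^2\leq\nu$, giving the same bound. The paper also states explicitly that $s>\frac12$ makes the Chebyshev series converge absolutely, so that the pointwise tail representation you use in the supremum bound is justified.
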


		\begin{proof}
			The proof consists of four steps. We write~$\|\cdot\|_{L_2}$ for the norm of~$L_2([-1,1]^d\times\U,\omega^{\mathrm{cheb}}\cdot\omega_{d+1})$, $\|\cdot\|_{C}$ for the supremum norm on~$[-1,1]^d\times\U$, and
				$\| w \|^2_M \coloneqq \frac1M\sum_{i=1}^M|w(\bx^i,u^i)|^2 $
			for the squared discrete seminorm induced by the samples.

			\emph{Step 1.}
			With the orthogonal projections~\eqref{eq:projection}, we set
			\begin{align*}
				g\coloneqq P_{\I(V_1)}F_j+u\,P_{\I(V_2)}G_j\in\operatorname{span}\mathcal{\widetilde D}
				\qquad\text{and}\qquad
				e\coloneqq H_j-g=\big(F_j-P_{\I(V_1)}F_j\big)+u\,\big(G_j-P_{\I(V_2)}G_j\big).
			\end{align*}
			Note that~$g$ and~$e$ do not depend on the samples.
			For~$f\in H^s_{\mathrm{mix}}([-1,1]^d)$ and an index set~$\I$, Parseval's identity and~\eqref{eq:Hsmix_norm} yield
			\begin{align*}
				\|f-P_\I f\|^2_{L_2([-1,1]^d,\omega^{\mathrm{cheb}})}=\sum_{\bk\notin\I}|c_\bk(f)|^2\leq\tau_\I\,\|f\|^2_{H^s_{\mathrm{mix}}([-1,1]^d)},
			\end{align*}
			and a Cauchy--Schwarz inequality with weights~$\beta(\bk)^{1/2}\beta(\bk)^{-1/2}$ gives~$\sum_{\bk\notin\I}|c_\bk(f)|\leq\rho_\I^{1/2}\|f\|_{H^s_{\mathrm{mix}}([-1,1]^d)}$.
			In particular, $\sum_{\bk\in\N_0^d}|c_\bk(f)|<\infty$ for~$s>\frac12$, such that the Chebyshev series of~$f$ converges absolutely and uniformly, $f$ coincides with a continuous function (which is the representative being sampled), and~$f-P_\I f=\sum_{\bk\notin\I}c_\bk(f)\varphi_\bk$ holds pointwise.
			Since~$|\varphi_\bk(\bx)|\leq2^{|\supp\bk|/2}\leq2^{d/2}$ for all~$\bx\in[-1,1]^d$ by~\eqref{eq:basis_cheb}, the triangle inequality yields
			\begin{align*}
				\|f-P_\I f\|_{C([-1,1]^d)}\leq2^{d/2}\sum_{\bk\notin\I}|c_\bk(f)|\leq2^{d/2}\,\rho_\I^{1/2}\,\|f\|_{H^s_{\mathrm{mix}}([-1,1]^d)} .
			\end{align*}
			Moreover, for~$a,b\in L_2([-1,1]^d,\omega^{\mathrm{cheb}})$ we have~$\|a+ub\|^2_{L_2}=\|a\|^2+2\eta\langle a,b\rangle+\nu\|b\|^2\leq(\|a\|+\sqrt\nu\|b\|)^2$ with the norm and inner product of~$L_2([-1,1]^d,\omega^{\mathrm{cheb}})$, since~$\eta^2\leq\nu$ by the Cauchy--Schwarz inequality.
			Applying these three estimates to~$e$ yields
			\begin{align}
				\label{eq:mz_e_bounds}
				\begin{split}
			\|e\|_{L_2}&\leq\|F_j\|_{H^s_{\mathrm{mix}}}\tau_{\I(V_1)}^{1/2}+\sqrt\nu\,\|G_j\|_{H^s_{\mathrm{mix}}}\tau_{\I(V_2)}^{1/2},
\\
\|e\|_{C}&\leq2^{d/2}\Big(\|F_j\|_{H^s_{\mathrm{mix}}}\rho_{\I(V_1)}^{1/2}+u_{\max}\|G_j\|_{H^s_{\mathrm{mix}}}\rho_{\I(V_2)}^{1/2}\Big).
				\end{split}
			\end{align}

			\emph{Step 2.}
			On the event where~\eqref{eq:mz} holds, $\mathcal G_M$ is positive definite, hence~$\boldsymbol B$ has full column rank and~\eqref{eq:opt_fullanova} has a unique solution~$\bc^\star$.
			Let~$\boldsymbol\Pi\in\R^{M\times M}$ denote the orthogonal projector onto the range of~$\boldsymbol B$, let~$\bc_g$ be the coefficient vector of~$g$, and let~$\boldsymbol e\coloneqq(e(\bx^i,u^i))_{i=1}^M$.
			Then~$\boldsymbol B\bc^\star=\boldsymbol\Pi\boldsymbol h$ with the sample vector~$\boldsymbol h=(H_j(\bx^i,u^i))_{i=1}^M$ from~\eqref{eq:opt_fullanova} and~$\boldsymbol B\bc_g=\boldsymbol\Pi\boldsymbol B\bc_g$, such that~$\boldsymbol B(\bc^\star-\bc_g)=\boldsymbol\Pi(\boldsymbol h-\boldsymbol B\bc_g)=\boldsymbol\Pi\boldsymbol e$ and therefore
			\begin{align*}
				\|\tilde H_j-g\|_M=M^{-1/2}\|\boldsymbol B(\bc^\star-\bc_g)\|_2=M^{-1/2}\|\boldsymbol\Pi\boldsymbol e\|_2\leq M^{-1/2}\|\boldsymbol e\|_2=\|e\|_M .
			\end{align*}
			As~$\tilde H_j-g\in\operatorname{span}\mathcal{\widetilde D}$, the lower bound in~\eqref{eq:mz} gives~$\|\tilde H_j-g\|_{L_2}\leq(1-\delta)^{-1/2}\|\tilde H_j-g\|_M$, and the triangle inequality yields
			\begin{align}
				\label{eq:mz_chain}
				\begin{split}
					\|H_j-\tilde H_j\|_{L_2}&\leq\|e\|_{L_2}+\|g-\tilde H_j\|_{L_2}\leq\|e\|_{L_2}+(1-\delta)^{-1/2}\|e\|_M
					\qquad\text{and hence}\\
					\|H_j-\tilde H_j\|^2_{L_2}&\leq2\|e\|^2_{L_2}+\frac{2}{1-\delta}\|e\|^2_M .
				\end{split}
			\end{align}

			\emph{Step 3.}
			The quantity~$\|e\|_M^2$ is the sample mean of the i.i.d.\ random variables~$|e(\bx^i,u^i)|^2\in[0,\|e\|_C^2]$ with expectation~$\|e\|^2_{L_2}$.
			Hence, Hoeffding's inequality~\cite{Hoeffding63} yields~$\mathbb P\big(\|e\|_M^2-\|e\|_{L_2}^2\geq t\big)\leq\exp(-2Mt^2/\|e\|_C^4)$ for every~$t>0$, and the choice~$t\coloneqq\|e\|_C^2\sqrt{r\log M/(2M)}$ gives
			\begin{align}
				\label{eq:mz_hoeffding}
				\|e\|_M^2\leq\|e\|_{L_2}^2+\|e\|_C^2\sqrt{\frac{r\log M}{2M}}
			\end{align}
			with probability at least~$1-M^{-r}$ (if~$\|e\|_C=0$, there is nothing to show).

			\emph{Step 4: Conclusion.}
			By Boole's inequality, \eqref{eq:mz} and~\eqref{eq:mz_hoeffding} hold simultaneously with probability at least~$1-2M^{-r}-M^{-r}$.
			Inserting~\eqref{eq:mz_hoeffding} into~\eqref{eq:mz_chain} and using~$2+\frac2{1-\delta}=\frac{2(2-\delta)}{1-\delta}$, we obtain
			\begin{align*}
				\|H_j-\tilde H_j\|^2_{L_2}\leq\frac{2(2-\delta)}{1-\delta}\|e\|^2_{L_2}+\frac{2}{1-\delta}\|e\|_C^2\sqrt{\frac{r\log M}{2M}},
			\end{align*}
			and~\eqref{eq:mz_e_bounds} yields the assertion.
		\end{proof}

		We give a short interpretation of the estimate~\eqref{eq:err_coupled_MZ}, which features two central components.
		The first term is independent of~$M$ and bounds the squared distance~$\|e\|^2_{L_2}$ of~$H_j$ to the comparison function~$g$, i.\,e., the truncation and projection error of Remark~\ref{Remark:lsqr_error} in the coupled setting. Note that~$g$ consists of the $L_2$-projections of~$F_j$ and~$G_j$, i.\,e., of their separate ANOVA approximations in the limit~$M\to\infty$, cf.\ Remark~\ref{Remark:anova_separate}. For~$\eta=0$, the Gram matrix~$\mathcal G$ is block diagonal and~$g$ coincides with the orthogonal projection~$P_{\mathcal{\widetilde D}}H_j$ of Proposition~\ref{Prop:limit_projection}, such that~$\|e\|_{L_2}$ is the best-approximation error of~$H_j$ in~$\operatorname{span}\mathcal{\widetilde D}$ itself. The factor~$2(2-\delta)/(1-\delta)$ stems from the binomial estimate in~\eqref{eq:mz_chain} and from the lower bound in~\eqref{eq:mz}. The first term can only be reduced by enlarging the index sets, and the factor~$\sqrt\nu$ weights the contribution of~$G_j$ with the second moment of the control.
		The second term is the estimation error and stems from replacing the~$L_2$-norm of the fixed remainder~$e$ by its discrete counterpart, decays like~$(\log M/M)^{1/2}$ for fixed index sets, and is proportional to the squared supremum norm of~$e$. This is why it involves the~$\ell_1$-tails~$\rho_\I$ instead of the~$\ell_\infty$-tails~$\tau_\I$ and carries the factor~$2^d$.

		We briefly comment on the choice of the index sets in the above theorem. 
		\begin{remark}[Deterministic versus data-dependent index sets]
			\label{Remark:datadependence}
			Theorem~\ref{Thm:err_coupled_MZ} (and likewise Corollary~\ref{Cor:err_est_fullanova_m} below) is formulated for index sets that are fixed \emph{before} the data are drawn.
			This is consistent with the sampling condition, which is a statement about the random matrix~$\boldsymbol B$ associated with the prescribed dictionary~$\mathcal{\widetilde D}$, cf.\ Proposition~\ref{Prop:MZ} and~\cite{KaUlVo19,SchmischkeDiss}.
			In Algorithm~\ref{alg:anova}, however, the set~$V$ of relevant terms in~\eqref{eq:indexset_V} is determined from the global sensitivity indices of a first approximation, i.\,e., from the same data that are subsequently used for the final least squares fit.
			Strictly speaking, the resulting index sets are then random and depend on the samples, so that the cited concentration results are not directly applicable and the theorem does not cover this variant.
			Hence, we briefly discuss two possible remedies that are standard in the ANOVA literature. Either the truncation step is skipped and the structure~\eqref{eq:ANOVA_struct_F,G} is prescribed, in which case the theorem applies as stated. Alternatively, the data are split into two independent batches. The first one is used for the detection of the active sets, while the second, conditionally independent batch is used for the final fit. In this case, the theorem applies conditionally on the first batch.
		\end{remark}

		The above results are statements about the combined map~$H_j$. They also yield bounds for the drift and the input map separately, which we make precise in the following remark.
		\begin{remark}[Separation of drift and input map]
			\label{Remark:no_separation}
			First note that computation in Remark~\ref{Remark:schur} does not use that the functions~$g$ and~$h$ therein belong to the spans of the two systems. Hence, for all~$a,b\in L_2(\Omega,\omega)$, we have
			\begin{align*}
				\|a+b\,u\|^2_{L_2(\Omega\times\U,\omega\cdot\omega_{d+1})}
				=\|a\|^2_{L_2(\Omega,\omega)}+2\eta\,\langle a,b\rangle_{L_2(\Omega,\omega)}+\nu\,\|b\|^2_{L_2(\Omega,\omega)}
				\geq\lambda_-\big(\|a\|^2_{L_2(\Omega,\omega)}+\|b\|^2_{L_2(\Omega,\omega)}\big).
			\end{align*}
			Choosing~$a=F_j-\tilde F_j$ and~$b=G_j-\tilde G_j$ with~$\tilde F_j\coloneqq\sum_{\bk\in\I(V_1)}c^1_\bk\varphi^1_\bk$ and~$\tilde G_j\coloneqq\sum_{\bs\in\I(V_2)}c^2_\bs\varphi^2_\bs$.  yields
			\begin{align*}
				\|F_j-\tilde F_j\|^2_{L_2(\Omega,\omega)}+\|G_j-\tilde G_j\|^2_{L_2(\Omega,\omega)}\leq\frac1{\lambda_-}\,\|H_j-\tilde H_j\|^2_{L_2(\Omega\times\U,\omega\cdot\omega_{d+1})}.
			\end{align*}
			Consequently, Theorem~\ref{Thm:err_coupled_MZ} yields error bounds for~$F_j$ and~$G_j$ individually, with the same probability and the additional factor~$1/\lambda_-$.
			In this sense, the coupled ansatz separates the drift from the input map for finitely many samples under Assumption~\ref{Ass:nondegenerate}. 
		\end{remark}

		\subsection[]{Multivariate control}\label{sec:2d}

		For~$m>1$ note that the rows of~\eqref{eq:system_H} can be written as
		\begin{align*}
			H_j(\bx,\bu) = F_j(\bx) + \sum_{\ell=1}^{m} G_{j\ell}(\bx) u_\ell, \quad j=1,\dots,d,
		\end{align*}
		with~$\bu = (u_\ell)_{\ell=1}^m \in \R^m$.
		Thus, we generalize the above idea of~\eqref{eq:approx_H_rowwise} as
		\begin{align}
			\label{eq:approx_H_rowwise_m}
			H_j(\bx,\bu) \approx \sum_{\bk\in\I(V_1)} c_\bk^1 \varphi^1_\bk(\bx) + \sum_{\ell=1}^{m} \left( \sum_{\bs\in\I(V_{\ell+1})} c_\bs^{\ell+1} \varphi^{\ell+1}_\bs(\bx) \right) u_\ell, \quad j=1,\dots,d.
		\end{align}
		Again we sample the whole system~\eqref{eq:system_H} at i.i.d.\ data points~$(\bx^i,\bu^i)\in\Omega\times\U \subset \R^{d+m}$, $i=1,\dots,M$, drawn according to the tensor product density~$\omega \cdot \widetilde\omega$ with~$M\in\N$ and~$\widetilde\omega \coloneqq \prod_{\ell=1}^{m} \omega_{d+\ell}$ being the density corresponding to~$\U=\bigtimes_{\ell=1}^m \U_\ell\subset\R^m$.
		Analogously to~\eqref{eq:eta_nu}, we abbreviate the first two moments of the components of the control by
		\begin{align}
			\label{eq:eta_nu_m}
			\eta_\ell \coloneqq \langle 1,u_\ell \rangle_{L_2(\U_\ell,\omega_{d+\ell})}, \qquad
			\nu_\ell \coloneqq \|u_\ell\|^2_{L_2(\U_\ell,\omega_{d+\ell})},
			\quad \ell=1,\dots,m,
		\end{align}
		and replace Assumption~\ref{Ass:nondegenerate} by its multivariate counterpart.
		\begin{assumption}[Non-degenerate control distribution, multivariate case]
			\label{Ass:nondegenerate_m}
			The distribution of each component~$u_\ell$ of the control is non-degenerate, i.\,e., $\nu_\ell-\eta_\ell^2>0$ for~$\ell=1,\dots,m$.
		\end{assumption}
		Introduce the vectors
		\begin{align*}
			\bc^{\ell+1} &\coloneqq \left(c_\bk^{\ell+1}\right)_{\bk\in\I(V_{\ell+1})} \in \R^{|\I(V_{\ell+1})|}, \quad \ell=0,\dots,m,
		\end{align*}
		and matrices
		\begin{align}
			\label{eq:matrices_Aell}
			\bA_{\ell+1} &\coloneqq \left(\varphi^{\ell+1}_\bk(\bx^i)\right)_{i=1,\,\bk\in\I(V_{\ell+1})}^{M} \in \R^{M\times|\I(V_{\ell+1})|}, \quad \ell=0,\dots,m,
		\end{align}
		as well as%
		\begin{align}
			\label{eq:matrices_Uell}
			\bU_\ell \coloneqq \operatorname{diag}(u^1_\ell,\ldots,u^M_\ell)\in \R^{M\times M},
			\quad \ell=1,\dots,m.
		\end{align}
		Then, the same technique as pursued for the single-input case $m=1$ applies to the individual components of~$\bu^i$. For fixed~$j=1,\dots,d$, we obtain the matrix-vector form
		\begin{align*}
			\boldsymbol h = \left(H_j(\bx^i,\bu^i)\right)_{i=1}^M
			\approx
			\begin{pmatrix}
				\bA_1 & \bU_1 \bA_2 & \dots & \bU_m \bA_{m+1}
			\end{pmatrix}
			\begin{pmatrix}
				\bc^1\\
				\vdots\\
				\bc^{m+1}
			\end{pmatrix}.
		\end{align*}
		The least squares solution to this problem is again computed by means of the normal equations.
		Thus, we need to ensure that the system matrix has full column rank.
		Since we may assume that all~$\bA_{\ell+1}$ have full column rank by Section~\ref{sec:ANOVA_implementation}, we require an overdetermined system with~$M \geq |\I(V_1)|+\dots+|\I(V_{m+1})|$.
		Again, the scaled Gram matrix converges almost surely, entrywise, to the positive definite limit matrix of the following remark, provided that Assumption~\ref{Ass:nondegenerate_m} holds.
		Since the limit matrix is invertible, the event of full column rank has asymptotic probability one. A quantitative statement, which holds with high probability, follows from the multivariate analogue of Proposition~\ref{Prop:MZ}, see the proof of Corollary~\ref{Cor:err_est_fullanova_m} below.
		Concerning the conditioning, zero-mean and normalized control densities again yield the identity as limit matrix, cf.\ Section~\ref{sec:1d}.

		\begin{remark}
			In analogy to the proof of Proposition~\ref{Prop:limit_projection}, the normal equations converge after scaling by~$\frac1M$ for~$M\to\infty$ to the limit system
			\begin{align*}
				&
				\begin{pmatrix}
					\bI_{|\I(V_1)|} & \eta_1 \cdot \boldsymbol\Phi_{\{1,2\}} & \dots & \eta_m \cdot \boldsymbol\Phi_{\{1,m+1\}} \\
					\eta_1 \cdot \boldsymbol\Phi_{\{1,2\}}^\top & \nu_1 \cdot \bI_{|\I(V_2)|} & \dots & \eta_1 \eta_m \cdot \boldsymbol\Phi_{\{2,m+1\}} \\
					\vdots & \vdots & \ddots & \vdots \\
					\eta_m \cdot \boldsymbol\Phi_{\{1,m+1\}}^\top & \eta_1 \eta_m \cdot \boldsymbol\Phi_{\{2,m+1\}}^\top & \dots & \nu_m \cdot \bI_{|\I(V_{m+1})|}
				\end{pmatrix}  
				\begin{pmatrix}
					\bc^1\\
					\bc^2\\
					\vdots\\
					\bc^{m+1}
				\end{pmatrix} \notag \\[1ex]
				& =
				\begin{pmatrix}
					\Big(\langle F_j,\varphi^1_\bk \rangle_{L_2(\Omega,\omega)} + \sum_{n=1}^m \eta_n \,\langle G_{jn},\varphi^1_\bk \rangle_{L_2(\Omega,\omega)}\Big)_{\bk\in\I(V_1)}
					\\
					\Big(\eta_1\,\langle F_j,\varphi^2_{\boldsymbol s} \rangle_{L_2(\Omega,\omega)} + \nu_1\, \langle G_{j1},\varphi^2_{\boldsymbol s} \rangle_{L_2(\Omega,\omega)} + \eta_1 \sum_{n\neq 1} \eta_n\, \langle G_{jn},\varphi^2_{\boldsymbol s} \rangle_{L_2(\Omega,\omega)}\Big)_{\boldsymbol s\in\I(V_2)} \\
					\vdots \\
					\Big(\eta_m\,\langle F_j,\varphi^{m+1}_{\boldsymbol s} \rangle_{L_2(\Omega,\omega)} + \nu_m\, \langle G_{jm},\varphi^{m+1}_{\boldsymbol s} \rangle_{L_2(\Omega,\omega)} + \eta_m \sum_{n\neq m} \eta_n\, \langle G_{jn},\varphi^{m+1}_{\boldsymbol s} \rangle_{L_2(\Omega,\omega)}\Big)_{\boldsymbol s\in\I(V_{m+1})}
				\end{pmatrix}
			\end{align*}
			with the matrices
			\begin{align*}
				\boldsymbol\Phi_{\{\ell+1,n+1\}} \coloneqq \left(\langle \varphi^{\ell+1}_\bk,\varphi^{n+1}_{\boldsymbol s} \rangle_{L_2(\Omega,\omega)}\right)_{\bk\in\I(V_{\ell+1}),\,\boldsymbol s\in\I(V_{n+1})},
				\quad 0\leq\ell<n\leq m,
			\end{align*}
			and the moments~$\eta_\ell$, $\nu_\ell$ from~\eqref{eq:eta_nu_m}.
			This limit system again coincides with the linear system characterizing the orthogonal projection of~$H_j$ onto the span of the corresponding dictionary, cf.~\eqref{eq:convergence_neq} and the projection interpretation given there.
			Here, we used~\mbox{$H_j = F_j + \sum_{n=1}^m G_{jn}u_n$} and the independence of the components of~$\bu$, i.\,e., $\langle u_n,u_\ell\rangle = \eta_n\eta_\ell$ for~$n\neq\ell$. For~$m=1$, the system indeed reduces to~\eqref{eq:convergence_neq}.
			Concerning solvability, the interpretation as an orthogonal projection carries over from the proof of Proposition~\ref{Prop:limit_projection}. The limit system is the Gram system of the dictionary
			$\{\varphi^1_\bk\}_{\bk\in\I(V_1)} \cup \bigcup_{\ell=1}^m \{\varphi^{\ell+1}_{\boldsymbol s} u_\ell\}_{\boldsymbol s\in\I(V_{\ell+1})}$
			in~$L_2(\Omega\times\U,\omega\cdot\widetilde\omega)$.
			Under Assumption~\ref{Ass:nondegenerate_m}, i.\,e., if none of the~$u_\ell$ is $\omega_{d+\ell}$-almost everywhere constant, the functions~$1,u_1,\dots,u_m$ are linearly independent in~$L_2(\U,\widetilde\omega)$, as the~$u_\ell$ are independent and non-degenerate. Therefore, the dictionary is linearly independent, its Gram matrix is positive definite, and the limit system is uniquely solvable.
			Finally, if all control densities have zero mean, i.\,e.,~$\eta_\ell=0$ for~$\ell=1,\dots,m$, all off-diagonal blocks, which carry the factors~$\eta_\ell$ or~$\eta_\ell\eta_n$, vanish. The limit system then becomes block diagonal, and the coupled reconstruction coincides in the limit with the separate ANOVA approximations of~$F_j$ and~$G_{j\ell}$ for arbitrary basis systems, in complete analogy to Remark~\ref{Remark:anova_separate}. 
		\end{remark}

		The error analysis of Section~\ref{sec:error_coupled} carries over to multivariate controls. To formulate it, we denote by
			\begin{align}
				\label{eq:moment_matrix}
				\boldsymbol N\coloneqq\int_{\U}\begin{pmatrix}1\\ \bu\end{pmatrix}\begin{pmatrix}1\\ \bu\end{pmatrix}^{\!\top}\widetilde\omega(\bu)\,\mathrm d\bu
				=\begin{pmatrix}1&\boldsymbol\eta^\top\\ \boldsymbol\eta&\boldsymbol\eta\boldsymbol\eta^\top+\operatorname{diag}\big(\nu_1-\eta_1^2,\dots,\nu_m-\eta_m^2\big)\end{pmatrix}\in\R^{(m+1)\times(m+1)}
			\end{align}
			with $ \boldsymbol\eta\coloneqq(\eta_\ell)_{\ell=1}^m$
			the Gram matrix of the functions~$1,u_1,\dots,u_m$ in~$L_2(\U,\widetilde\omega)$, where the second identity follows from the independence of the control components.
			Its Schur complement with respect to the top left entry is the diagonal matrix of the variances~$\nu_\ell-\eta_\ell^2$, such that~$\boldsymbol N$ is positive definite if and only if Assumption~\ref{Ass:nondegenerate_m} holds.
			For~$m=1$, the eigenvalues of~$\boldsymbol N=\bigl(\begin{smallmatrix}1&\eta\\ \eta&\nu\end{smallmatrix}\bigr)$ are~$\lambda_\pm$ from~\eqref{eq:limit_eigenvalues}, and for zero-mean controls we have~$\boldsymbol N=\operatorname{diag}(1,\nu_1,\dots,\nu_m)$.

		\begin{corollary}[Error estimate for multivariate control]
			\label{Cor:err_est_fullanova_m}

			Let~$m\geq1$,  $j\in [d]$, let Assumption~\ref{Ass:nondegenerate_m} hold, and let~$\lambda_->0$ denote the smallest eigenvalue of~$\boldsymbol N$ in~\eqref{eq:moment_matrix}.
			Let~$F_j,G_{j1},\dots,G_{jm}\in H^s_{\mathrm{mix}}([-1,1]^d)$ with~$s>\frac12$, let the sample points be drawn i.i.d.\ according to~$\omega^{\mathrm{cheb}}\cdot\widetilde\omega$, and use the Chebyshev basis~\eqref{eq:basis_cheb} in all~$m+1$ systems of the ansatz~\eqref{eq:approx_H_rowwise_m} with prescribed index sets~$\I(V_1),\dots,\I(V_{m+1})$ of cardinalities~$n_1,\dots,n_{m+1}$.
			With~$u_{\max,\ell}\coloneqq\max_{\bu\in\U}|u_\ell|$, $\alpha\coloneqq\max\{|\supp\bk|\colon\bk\in\I(V_1)\cup\dots\cup\I(V_{m+1})\}$ and~$c_\delta$ from~\eqref{eq:mz_condition}, let~$\delta\in(0,1)$, $r>0$ and~$M>3$ satisfy
			\begin{align*}
				n_1+\sum_{\ell=1}^m u_{\max,\ell}^2\,n_{\ell+1}\leq\frac{c_\delta\,\lambda_-\,M}{2^\alpha\,(r+1)\log M}.
			\end{align*}
			Let~$\tilde H_j=\tilde F_j+\sum_{\ell=1}^m\tilde G_{j\ell}u_\ell$ be the approximation~\eqref{eq:approx_H_rowwise_m} with the coefficients obtained from the corresponding least squares problem, where~$\tilde F_j$ and~$\tilde G_{j\ell}$ collect the terms with the coefficients~$\bc^1$ and~$\bc^{\ell+1}$, respectively.
			Then, with probability at least~$1-3M^{-r}$, we have
			\begin{align*}
				\|H_j-\tilde H_j\|^2_{L_2([-1,1]^d\times\U,\omega^{\mathrm{cheb}}\cdot\widetilde\omega)}
				&\leq\frac{2(2-\delta)}{1-\delta}\Big(\|F_j\|_{H^s_{\mathrm{mix}}}\,\tau_{\I(V_1)}^{1/2}+\sum_{\ell=1}^m\sqrt{\nu_\ell}\,\|G_{j\ell}\|_{H^s_{\mathrm{mix}}}\,\tau_{\I(V_{\ell+1})}^{1/2}\Big)^2\\
				&\quad+\frac{2^{d+1}}{1-\delta}\sqrt{\frac{r\log M}{2M}}\Big(\|F_j\|_{H^s_{\mathrm{mix}}}\,\rho_{\I(V_1)}^{1/2}+\sum_{\ell=1}^m u_{\max,\ell}\,\|G_{j\ell}\|_{H^s_{\mathrm{mix}}}\,\rho_{\I(V_{\ell+1})}^{1/2}\Big)^2
			\end{align*}
			with the abbreviations of Theorem~\ref{Thm:err_coupled_MZ}, as well as
			\begin{align*}
				\|F_j-\tilde F_j\|^2_{L_2([-1,1]^d,\omega^{\mathrm{cheb}})}+\sum_{\ell=1}^m\|G_{j\ell}-\tilde G_{j\ell}\|^2_{L_2([-1,1]^d,\omega^{\mathrm{cheb}})}\leq\frac1{\lambda_-}\,\|H_j-\tilde H_j\|^2_{L_2([-1,1]^d\times\U,\omega^{\mathrm{cheb}}\cdot\widetilde\omega)}.
			\end{align*}

		\end{corollary}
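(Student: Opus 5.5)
The plan is to repeat the proof of Theorem~\ref{Thm:err_coupled_MZ} almost line by line for the enlarged dictionary $\{\varphi_\bk\}_{\bk\in\I(V_1)}\cup\bigcup_{\ell=1}^m\{\varphi_\bs u_\ell\}_{\bs\in\I(V_{\ell+1})}$. The only genuinely new ingredient is a multivariate version of the spectral bound~\eqref{eq:limit_eigenvalues}, which is then fed into Proposition~\ref{Prop:MZ}.

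\emph{Step 1: spectral bound for the Gram matrix.} For $a_0,\dots,a_m\in L_2([-1,1]^d,\omega^{\mathrm{cheb}})$, Fubini and the independence of the control components give
\begin{align*}
\Big\|a_0+\sum_{\ell=1}^m a_\ell u_\ell\Big\|^2_{L_2}=\int_{[-1,1]^d}\boldsymbol a(\bx)^\top\boldsymbol N\,\boldsymbol a(\bx)\,\omega^{\mathrm{cheb}}(\bx)\,\mathrm d\bx\geq\lambda_-\sum_{\ell=0}^m\|a_\ell\|^2_{L_2([-1,1]^d,\omega^{\mathrm{cheb}})},
\end{align*}
where $\boldsymbol a(\bx)=(a_0(\bx),\dots,a_m(\bx))^\top$. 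The identity is a pointwise-in-$\bx$ quadratic form computed by integrating over $\bu$ first.

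Taking $a_0=\sum_\bk c^1_\bk\varphi_\bk$ and $a_\ell=\sum_\bs c^{\ell+1}_\bs\varphi_\bs$, orthonormality turns the right-hand side into $\lambda_-\|\bc\|_2^2$. Hence $\mathcal G\succeq\lambda_-\bI$. Positivity of $\lambda_-$ follows from Assumption~\ref{Ass:nondegenerate_m} via the Schur complement remark after~\eqref{eq:moment_matrix}.

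\emph{Step 2: Marcinkiewicz--Zygmund inequality.} Rerun the proof of Proposition~\ref{Prop:MZ} with $n=\sum_\ell n_\ell$. The envelope now satisfies
\begin{align*}
\|\boldsymbol v(\bz)\|_2^2\leq 2^\alpha\Big(n_1+\sum_{\ell=1}^m u_{\max,\ell}^2 n_{\ell+1}\Big),
\end{align*}
so $K\leq 2^\alpha\big(n_1+\sum_\ell u_{\max,\ell}^2n_{\ell+1}\big)/\lambda_-$. The assumed sampling condition then gives $\mathrm e^{-Mc_\delta/K}\leq M^{-(r+1)}$. Together with $n\leq K\leq M$ and matrix Chernoff, this yields~\eqref{eq:mz} on the new span with probability at least $1-2M^{-r}$.

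\emph{Step 3: rerun Steps 1--4 of Theorem~\ref{Thm:err_coupled_MZ}.} Set $g=P_{\I(V_1)}F_j+\sum_\ell u_\ell P_{\I(V_{\ell+1})}G_{j\ell}$ and $e=H_j-g$. The $L_2$ bound on $e$ uses
\begin{align*}
\Big\|a_0+\sum_\ell a_\ell u_\ell\Big\|_{L_2}\leq\|a_0\|+\sum_\ell\|a_\ell u_\ell\|_{L_2}=\|a_0\|+\sum_\ell\sqrt{\nu_\ell}\,\|a_\ell\|,
\end{align*}
which is just the triangle inequality plus Fubini, together with the tail bounds via $\tau$. The sup-norm bound on $e$ uses $|u_\ell|\leq u_{\max,\ell}$ and the $\rho$-tails. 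The projector argument of Step~2 and Hoeffding's inequality of Step~3 are unchanged. Boole's inequality then gives probability $1-3M^{-r}$ and the first displayed estimate.

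\emph{Step 4: separation of drift and inputs.} Apply Step~1 with $a_0=F_j-\tilde F_j$ and $a_\ell=G_{j\ell}-\tilde G_{j\ell}$. This gives the second estimate, exactly as in Remark~\ref{Remark:no_separation}.

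The main obstacle is Step~1: getting a clean multivariate lower bound for $\mathcal G$ in terms of $\boldsymbol N$ without assuming identical basis systems. This works because the quadratic-form identity holds pointwise in $\bx$ before the $\bx$-integration, and orthonormality within each block removes the cross-block structure $\boldsymbol\Phi$. Everything else is bookkeeping.
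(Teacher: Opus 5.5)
Your proposal is correct and follows the paper's proof essentially step for step. It uses the pointwise-in-$\bx$ quadratic form with $\boldsymbol N$ to get $\mathcal G\succeq\lambda_-\bI$ as the replacement for \eqref{eq:limit_eigenvalues}, the enlarged envelope bound in Proposition~\ref{Prop:MZ}, the triangle inequality with $\|u_\ell b\|=\sqrt{\nu_\ell}\,\|b\|$ for the analogue of \eqref{eq:mz_e_bounds}, and the same moment bound applied to the coefficient errors for the separation estimate (the identity in your Step~1 is just the definition of $\boldsymbol N$ as the Gram matrix of $1,u_1,\dots,u_m$, with independence entering only through its explicit form).
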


		\begin{proof}
			We indicate the changes in the proofs of Proposition~\ref{Prop:MZ}, Theorem~\ref{Thm:err_coupled_MZ} and Remark~\ref{Remark:no_separation}.
			Let~$a$, $b_1,\dots,b_m\in L_2([-1,1]^d,\omega^{\mathrm{cheb}})$ and~$\boldsymbol w\coloneqq(a,b_1,\dots,b_m)^\top$. Then, Fubini's theorem yields
			\begin{align}
				\label{eq:moment_bound}
				\Big\|a+\sum_{\ell=1}^m b_\ell u_\ell\Big\|^2_{L_2([-1,1]^d\times\U,\omega^{\mathrm{cheb}}\cdot\widetilde\omega)}
				=\int_{[-1,1]^d}\boldsymbol w(\bx)^\top\boldsymbol N\,\boldsymbol w(\bx)\,\omega^{\mathrm{cheb}}(\bx)\,\mathrm d\bx
				\geq\lambda_-\Big(\|a\|^2+\sum_{\ell=1}^m\|b_\ell\|^2\Big)
			\end{align}
			with the norms of~$L_2([-1,1]^d,\omega^{\mathrm{cheb}})$ on the right-hand side. Here, we used that
			\begin{align*}
				\int_\U\big|\boldsymbol w(\bx)^\top(1,\bu^\top)^\top\big|^2\,\widetilde\omega(\bu)\,\mathrm d\bu=\boldsymbol w(\bx)^\top\boldsymbol N\boldsymbol w(\bx)\geq\lambda_-\|\boldsymbol w(\bx)\|_2^2
				\qquad\text{for every }\bx\in[-1,1]^d.
			\end{align*}
			For~$a$ and~$b_\ell$ in the spans of the respective systems, the right-hand side equals~$\lambda_-$ times the squared Euclidean norm of the coefficient vector by orthonormality. Hence, the Gram matrix~$\mathcal G$ of the dictionary of~\eqref{eq:approx_H_rowwise_m} satisfies~$\mathcal G\succeq\lambda_-\bI$, which replaces~\eqref{eq:limit_eigenvalues} in the proof of Proposition~\ref{Prop:MZ}.
			Moreover, the vector~$\boldsymbol v(\bz)$ now consists of the~$\varphi_\bk(\bx)$, $\bk\in\I(V_1)$, and the~$\varphi_\bs(\bx)\,u_\ell$, $\bs\in\I(V_{\ell+1})$, $\ell=1,\dots,m$, such that~$\|\boldsymbol v(\bz)\|_2^2\leq2^\alpha\big(n_1+\sum_{\ell=1}^m u_{\max,\ell}^2\,n_{\ell+1}\big)$, and the remaining steps of the proof of Proposition~\ref{Prop:MZ} carry over.
			In the proof of Theorem~\ref{Thm:err_coupled_MZ}, we set~$g\coloneqq P_{\I(V_1)}F_j+\sum_{\ell=1}^m u_\ell\,P_{\I(V_{\ell+1})}G_{j\ell}$. The triangle inequality together with~$\|u_\ell b\|_{L_2([-1,1]^d\times\U,\omega^{\mathrm{cheb}}\cdot\widetilde\omega)}=\sqrt{\nu_\ell}\,\|b\|_{L_2([-1,1]^d,\omega^{\mathrm{cheb}})}$ and~$|u_\ell|\leq u_{\max,\ell}$ yields the analogues of~\eqref{eq:mz_e_bounds}, and Steps~2--4 follow analogously.
			Finally, the second estimate follows from~\eqref{eq:moment_bound} with~$a=F_j-\tilde F_j$ and~$b_\ell=G_{j\ell}-\tilde G_{j\ell}$.

		\end{proof}

		\section{Numerical Examples\label{sec:numerics}}

		In this section, we illustrate the coupled approach of Section~\ref{sec:control} by means of four examples. The first one is an academic test function with prescribed low-dimensional structure. The other three are discretized PDEs, namely a nonlinear advection-diffusion equation, the shallow water equations, and the viscous Burgers equation of Example~\ref{ex:burgers}.
		For each example, we illustrate that the relevant ANOVA terms of~$F$ and~$G$ are identified from the approximated sensitivity indices~\eqref{eq:gsis_approx} and analyze the accuracy of the approximation. 

		Throughout, we proceed rowwise on a single component~$H_j$ of~\eqref{eq:system_H}. We use the Chebyshev basis~\eqref{eq:basis_cheb} with i.i.d.\ samples from the Chebyshev density, and apply the two-stage procedure of Algorithm~\ref{alg:anova} with threshold~\mbox{$\varepsilon = 1/|V_{\mathrm{cand}}|$}, where~$V_{\mathrm{cand}}$ is the family of non-constant candidate terms. For~$d=8$ and~$q=2$, this yields~$\varepsilon=1/73$ for~$m=1$ and~$\varepsilon=1/110$ for~$m=2$.
		The coupled approach is realized via the preprocessed index sets of Algorithm~\ref{alg:preprocessed}, such that the ansatz~\eqref{eq:approx_H_rowwise_m} consists of~$m+1$ groups of terms with the same number of basis functions each. Correspondingly, we choose the number~$M$ of sample points~$(\bx^i,\bu^i)$ proportional to~$m+1$, such that the ratio of sample points to unknowns does not depend on~$m$.
		Since the coupled approach yields combined indices for~$h(\bz)=H_j(\bx,\bu)$, we split them for the visualization into a group of terms that depend only on~$\bx$ and groups of terms containing one~$u_\ell$, where the term depending on~$u_\ell$ only, i.\,e., the constant part of~$G_\ell$, is not displayed. We then renormalize the indices within each group, whereas the truncation itself is carried out with the combined indices and the threshold~$\varepsilon$.
		In all figures depicting sensitivity indices, we show the approximated indices with the terms for~$F$ on the left and those for the input maps~$G_\ell$ on the right.
		The computations were carried out with the package~\cite{ANOVAapprox,pyANOVAapprox}.

		Reconstruction errors are estimated by Monte-Carlo quadrature,
		\begin{align}
			\label{eq:mc_error}
			\|H_j-\tilde H_j\|_{L_2(\Omega\times\U,\omega \cdot \widetilde\omega)}
			\approx \left(\frac 1{M_{\mathrm{test}}} \sum_{i=1}^{M_{\mathrm{test}}} \big|H_j(\bx^i,\bu^i)-\tilde H_j(\bx^i,\bu^i)\big|^2\right)^{1/2},
		\end{align}
		using~$M_{\mathrm{test}}=200$ fresh points that are independent of the training data. In the following, $e$ denotes this error.
		Unless stated otherwise, we use~$\Omega=[-1,1]^d$, $\U=[-1,1]^m$, $d=8$, $q=2$, $\bN=(6,4)$, $M=10\,000\,(m+1)$, and no regularization. States and controls are drawn from the Chebyshev density, which gives~$\eta_\ell=0$, $\nu_\ell=\frac12$, \mbox{$\max_{\bu\in\U}|u_\ell|^2=1$}, and hence~$\lambda_-=\frac12$ in Proposition~\ref{Prop:MZ} and Corollary~\ref{Cor:err_est_fullanova_m}.

The Julia code that reproduces all numerical experiments and figures of this section is available at~
\begin{center}
    \url{https://gitlab.hrz.tu-chemnitz.de/manuels-at-tu-chemnitz.de/anova-for-control}.
\end{center}

		\subsection{Academic test function\label{sec:toy}}
		As a first test case we consider the example function
		\begin{align*}
			H_j(\bx,u) = (x_4-x_1)\,x_2 - x_3u
		\end{align*}
		with~$d=8$ and~$m=1$. This corresponds to one row of~\eqref{eq:system_H}, with~\mbox{$F_j(\bx)=(x_4-x_1)\,x_2$} and~$G_{j}(\bx)=-x_3$ in the notation of~\eqref{eq:ANOVA_struct_F,G}. Hence, the index sets to be identified are $V_{j,0}=\{\{1,2\},\{2,4\}\}$ and $V_{j,1}=\{\{3\}\}$, while the remaining four variables~$x_5,\dots,x_8$ are uninformative.
		Since this example function contains two-dimensional terms at maximum, it is sufficient to choose~$q=2$.

		\begin{figure}[htb]
			\centering
			\input{figures/fair_comparison_toy_gsis.tikz}
			\caption{Approximated global sensitivity indices~\eqref{eq:gsis_approx} for the \emph{academic test function} with~$m=1$, where the relevant terms are~$\{1,2\}$, $\{2,4\}$ of~$F$ and~$\{3\}$ of~$G_1$.
				\label{fig:gsis}}
		\end{figure}
		In Figure~\ref{fig:gsis}, we depict the approximated global sensitivity indices.
		We observe that the relevant index sets are correctly identified, as expected for a function with prescribed structure. The resulting reconstruction is accurate as well. The $L_2$-error~\eqref{eq:mc_error} is~$e = 1.50\cdot 10^{-9}$, i.\,e., the function is reproduced up to the accuracy of the least squares solver.

		Moreover, we study the influence of the superposition dimension~$q$ (truncation error) and of the number of samples~$M$ (estimation error), cf.\ Remark~\ref{Remark:lsqr_error}. The corresponding results are depicted in Figure~\ref{fig:error_toy}, where the bandwidths are kept at the default~\mbox{$\bN=(6,4)$}.
		Since the example consists of ANOVA terms of order at most two, the truncation error vanishes for~$q\geq2$, as confirmed by the numerical results. 
		When varying~$M\in(m+1)\cdot\{10,20,50,\dots,50\,000\}$, the error remains above~$6\cdot10^{-1}$ up to~$M=100$, drops below~$10^{-6}$ at~$M=200$, and reaches the linear solver accuracy of~$10^{-9}$--$10^{-8}$ for larger~$M$.

		\begin{figure}[htb]
			\centering
			\mbox{

\begin{tikzpicture}[/tikz/background rectangle/.style={fill={rgb,1:red,1.0;green,1.0;blue,1.0}, fill opacity={1.0}, draw opacity={1.0}}, show background rectangle]
\begin{axis}[point meta max={nan}, point meta min={nan}, legend cell align={left}, legend columns={1}, legend style={color={rgb,1:red,0.0;green,0.0;blue,0.0}, draw opacity={1.0}, line width={1}, solid, fill={rgb,1:red,1.0;green,1.0;blue,1.0}, fill opacity={1.0}, text opacity={1.0}, font={{\fontsize{5 pt}{6 pt}\selectfont}}, text={rgb,1:red,0.3216;green,0.3176;blue,0.3059}, cells={anchor={center}}, at={(0.5, -0.62)}, anchor={north}}, axis background/.style={fill={rgb,1:red,1.0;green,1.0;blue,1.0}, opacity={1.0}}, anchor={north west}, xshift={0.4109mm}, yshift={-0.4044mm}, width={68.0000mm}, height={35.0000mm}, scaled x ticks={false}, xlabel={superposition dimension $q$}, x tick style={color={rgb,1:red,0.0;green,0.0;blue,0.0}, opacity={1.0}}, x tick label style={color={rgb,1:red,0.0;green,0.0;blue,0.0}, opacity={1.0}, rotate={0}}, xlabel style={at={(ticklabel cs:0.5)}, anchor=near ticklabel, at={{(ticklabel cs:0.5)}}, anchor={near ticklabel}, font={{\fontsize{8 pt}{10.4 pt}\selectfont}}, color={rgb,1:red,0.3216;green,0.3176;blue,0.3059}, draw opacity={1.0}, rotate={0.0}}, xmajorgrids={true}, xmin={0.9099999999999999}, xmax={4.09}, xticklabels={{$1$,$2$,$3$,$4$}}, xtick={{1.0,2.0,3.0,4.0}}, xtick align={inside}, xticklabel style={font={{\fontsize{7 pt}{9.1 pt}\selectfont}}, color={rgb,1:red,0.5373;green,0.5294;blue,0.5059}, draw opacity={1.0}, rotate={0.0}}, x grid style={color={rgb,1:red,0.8824;green,0.8784;blue,0.851}, draw opacity={0.1}, line width={1}, solid}, xticklabel pos={left}, x axis line style={color={rgb,1:red,0.0;green,0.0;blue,0.0}, draw opacity={1.0}, line width={1}, solid}, scaled y ticks={false}, ylabel={$L_2\ \mathrm{error}$}, y tick style={color={rgb,1:red,0.0;green,0.0;blue,0.0}, opacity={1.0}}, y tick label style={color={rgb,1:red,0.0;green,0.0;blue,0.0}, opacity={1.0}, rotate={0}}, ylabel style={at={(ticklabel cs:0.5)}, anchor=near ticklabel, at={{(ticklabel cs:0.5)}}, anchor={near ticklabel}, font={{\fontsize{8 pt}{10.4 pt}\selectfont}}, color={rgb,1:red,0.3216;green,0.3176;blue,0.3059}, draw opacity={1.0}, rotate={0.0}}, ymode={log}, log basis y={10}, ymajorgrids={true}, ymin={8.135512131092154e-10}, ymax={1.2497803720365674}, yticklabels={{$10^{-5}$,$10^{0}$}}, ytick={{1.0e-5,1.0}}, ytick align={inside}, yticklabel style={font={{\fontsize{7 pt}{9.1 pt}\selectfont}}, color={rgb,1:red,0.5373;green,0.5294;blue,0.5059}, draw opacity={1.0}, rotate={0.0}}, y grid style={color={rgb,1:red,0.8824;green,0.8784;blue,0.851}, draw opacity={0.1}, line width={1}, solid}, yticklabel pos={left}, y axis line style={color={rgb,1:red,0.0;green,0.0;blue,0.0}, draw opacity={1.0}, line width={1}, solid}, colorbar={false}]
    \addplot[color={rgb,1:red,0.8889;green,0.4356;blue,0.2781}, name path={2}, draw opacity={1.0}, line width={1}, solid, mark={*}, mark size={2.25 pt}, mark repeat={1}, mark options={color={rgb,1:red,0.0;green,0.0;blue,0.0}, draw opacity={1.0}, fill={rgb,1:red,0.8889;green,0.4356;blue,0.2781}, fill opacity={1.0}, line width={0.75}, rotate={0}, solid}]
        table[row sep={\\}]
        {
            \\
            1.0  0.6831960108191952  \\
            2.0  1.5711769803428243e-9  \\
            3.0  1.4803988226864869e-9  \\
            4.0  2.1400451874906584e-9  \\
        }
        ;
\end{axis}
\end{tikzpicture}\hspace{3ex}

\begin{tikzpicture}[/tikz/background rectangle/.style={fill={rgb,1:red,1.0;green,1.0;blue,1.0}, fill opacity={1.0}, draw opacity={1.0}}, show background rectangle]
\begin{axis}[point meta max={nan}, point meta min={nan}, legend cell align={left}, legend columns={1}, legend style={color={rgb,1:red,0.0;green,0.0;blue,0.0}, draw opacity={1.0}, line width={1}, solid, fill={rgb,1:red,1.0;green,1.0;blue,1.0}, fill opacity={1.0}, text opacity={1.0}, font={{\fontsize{5 pt}{6 pt}\selectfont}}, text={rgb,1:red,0.3216;green,0.3176;blue,0.3059}, cells={anchor={center}}, at={(0.5, -0.62)}, anchor={north}}, axis background/.style={fill={rgb,1:red,1.0;green,1.0;blue,1.0}, opacity={1.0}}, anchor={north west}, xshift={0.4107mm}, yshift={-0.4043mm}, width={68.0000mm}, height={35.0000mm}, scaled x ticks={false}, xlabel={number of samples $M$}, x tick style={color={rgb,1:red,0.0;green,0.0;blue,0.0}, opacity={1.0}}, x tick label style={color={rgb,1:red,0.0;green,0.0;blue,0.0}, opacity={1.0}, rotate={0}}, xlabel style={at={(ticklabel cs:0.5)}, anchor=near ticklabel, at={{(ticklabel cs:0.5)}}, anchor={near ticklabel}, font={{\fontsize{8 pt}{10.4 pt}\selectfont}}, color={rgb,1:red,0.3216;green,0.3176;blue,0.3059}, draw opacity={1.0}, rotate={0.0}}, xmode={log}, log basis x={10}, xmajorgrids={true}, xmin={15.49033804788709}, xmax={129112.74071728889}, xticklabels={{$10^{2}$,$10^{4}$}}, xtick={{100.0,10000.0}}, xtick align={inside}, xticklabel style={font={{\fontsize{7 pt}{9.1 pt}\selectfont}}, color={rgb,1:red,0.5373;green,0.5294;blue,0.5059}, draw opacity={1.0}, rotate={0.0}}, x grid style={color={rgb,1:red,0.8824;green,0.8784;blue,0.851}, draw opacity={0.1}, line width={1}, solid}, xticklabel pos={left}, x axis line style={color={rgb,1:red,0.0;green,0.0;blue,0.0}, draw opacity={1.0}, line width={1}, solid}, scaled y ticks={false}, ylabel={$L_2\ \mathrm{error}$}, y tick style={color={rgb,1:red,0.0;green,0.0;blue,0.0}, opacity={1.0}}, y tick label style={color={rgb,1:red,0.0;green,0.0;blue,0.0}, opacity={1.0}, rotate={0}}, ylabel style={at={(ticklabel cs:0.5)}, anchor=near ticklabel, at={{(ticklabel cs:0.5)}}, anchor={near ticklabel}, font={{\fontsize{8 pt}{10.4 pt}\selectfont}}, color={rgb,1:red,0.3216;green,0.3176;blue,0.3059}, draw opacity={1.0}, rotate={0.0}}, ymode={log}, log basis y={10}, ymajorgrids={true}, ymin={5.205006361761529e-10}, ymax={1.6015246846874094}, yticklabels={{$10^{-5}$,$10^{0}$}}, ytick={{1.0e-5,1.0}}, ytick align={inside}, yticklabel style={font={{\fontsize{7 pt}{9.1 pt}\selectfont}}, color={rgb,1:red,0.5373;green,0.5294;blue,0.5059}, draw opacity={1.0}, rotate={0.0}}, y grid style={color={rgb,1:red,0.8824;green,0.8784;blue,0.851}, draw opacity={0.1}, line width={1}, solid}, yticklabel pos={left}, y axis line style={color={rgb,1:red,0.0;green,0.0;blue,0.0}, draw opacity={1.0}, line width={1}, solid}, colorbar={false}]
    \addplot[color={rgb,1:red,0.8889;green,0.4356;blue,0.2781}, name path={2}, draw opacity={1.0}, line width={1}, solid, mark={*}, mark size={2.25 pt}, mark repeat={1}, mark options={color={rgb,1:red,0.0;green,0.0;blue,0.0}, draw opacity={1.0}, fill={rgb,1:red,0.8889;green,0.4356;blue,0.2781}, fill opacity={1.0}, line width={0.75}, rotate={0}, solid}]
        table[row sep={\\}]
        {
            \\
            20.0  0.8221215322212242  \\
            40.0  0.8139138143431505  \\
            100.0  0.6804479704820462  \\
            200.0  1.1798039431898881e-7  \\
            400.0  4.184691787525915e-8  \\
            1000.0  1.197826010646155e-8  \\
            2000.0  5.8440273166427115e-9  \\
            4000.0  8.177187035795926e-9  \\
            10000.0  1.0685303424344645e-8  \\
            20000.0  2.2730360998888735e-9  \\
            40000.0  5.09428934197538e-9  \\
            100000.0  1.6831442355123002e-9  \\
        }
        ;
\end{axis}
\end{tikzpicture}}
			\caption{Error~$e$ for the \emph{academic test function} depending on the superposition dimension~$q$ (left) and the number of samples~$M$ (right).
				\label{fig:error_toy}}
		\end{figure}

		\subsection{Diffusion equation with nonlinear advection\label{sec:advection}}
		As a second example, we consider the nonlinear advection-diffusion equation
		\begin{equation}
			\frac{\partial}{\partial t}x \;=\; D\,\frac{\partial^2}{\partial \omega^2}x \;+\; \big[x(1-x)\big]\,\frac{\partial}{\partial \omega}x,
			\qquad \omega \in(0,L), \; t>0.
			\label{eq:reaction-gradient}
		\end{equation}
		Here, $D>0$ denotes the diffusion coefficient and~$x(\omega,t)$ represents, e.\,g., the density of a biological population. The first term on the right-hand side accounts for diffusion. The second term combines the logistic nonlinearity~$R(x)=x(1-x)$ with the spatial gradient to form a state-dependent drift, which is strongest at intermediate concentrations and vanishes for~$x\in\{0,1\}$.

		Using a finite-difference approximation of the spatial derivatives with step~$\Delta \omega > 0$ and an explicit Euler method with time step~$\Delta t > 0$, we obtain the dynamics
		\begin{align*}
			X_i^+ &= X_i + \Delta t\left(D\,\frac{X_{i+1} - 2X_i + X_{i-1}}{\Delta \omega^2} + X_i (1-X_i)\,\frac{X_{i+1}-X_{i-1}}{2\Delta \omega} \right)\\[1.2ex]
			&= f_1(X_i) + f_2(X_{i+1}) + f_3(X_{i-1}) + f_4(X_i,X_{i+1}) + f_5(X_i,X_{i-1})
		\end{align*}
		at spatial point~$i$, for suitably chosen~$f_n$, $n=1,\ldots,5$.

		\medskip
		We now consider two different ways of adding a control to this system, which differ in how the control enters the dynamics and, consequently, in the structure of the resulting input map~$G$.

		\bigskip\noindent\emph{Gradient-coupled control.} First, we add a term~$\bigl(\frac{\partial x}{\partial \omega}\bigr)^2 u$, which acts most strongly where the spatial gradient is steep and results in		
		\begin{align}
			\label{eq:gradient_control}
			X_i^+ &= X_i + \Delta t\left(D\,\frac{X_{i+1} - 2X_i + X_{i-1}}{\Delta \omega^2} + X_i (1-X_i)\,\frac{X_{i+1}-X_{i-1}}{2\Delta \omega}   + \left(\frac{X_{i+1}-X_{i-1}}{2\Delta \omega}\right)^2U_i \right).
		\end{align}
		For this example, the threshold~$\varepsilon$ for the global sensitivity indices would have to be chosen considerably smaller to capture all relevant terms, see Figure~\ref{fig:advection_02}, where the indices are renormalized within each group. The weak couplings~$\{1\}$ and~$\{3\}$ of~$G_1$ have combined indices of~$2.2\cdot10^{-3}$ each, which is below~$\varepsilon=1/73\approx1.4\cdot10^{-2}$, such that they are truncated in the second least squares step. As a result, the error~$e = 8.87\cdot 10^{-2}$ is considerably larger than for the academic test function.
		\begin{figure}[htb]
			\centering
			\input{figures/fair_comparison_advection_gradient_gsis.tikz}
			\caption{Approximated global sensitivity indices for the \emph{diffusion equation with gradient-coupled control}~\eqref{eq:gradient_control}, for~$i=2$ and~\mbox{$D=\Delta t=\Delta \omega=1$}, where the relevant terms are~$\{1\}, \{2\}, \{3\}, \{1,2\}, \{2,3\}$ of~$F$ and~$\{1\}, \{3\}, \{1,3\}$ of~$G_1$.
				\label{fig:advection_02}}
		\end{figure}

		\bigskip\noindent\emph{Controlled nonlinear advection.} Second, we multiply the advection term by~$u$, i.\,e., we consider
		\begin{align}
			\label{eq:nonlin_adv}
			X_i^+ &= X_i + \Delta t\left(D\,\frac{X_{i+1} - 2X_i + X_{i-1}}{\Delta \omega^2} + X_i (1-X_i)\,\frac{X_{i+1}-X_{i-1}}{2\Delta \omega} U_i \right).
		\end{align}
		Here, the advection term is multiplied by the control, such that the second-order interactions move from~$F$ to~$G_1$. Consequently, the relevant terms of~$F$ reduce to the three one-dimensional diffusion terms, while~$G_1$ contains the pair couplings~$\{1,2\}$ and~$\{2,3\}$ together with the weaker terms~$\{1\}$ and~$\{3\}$, see Figure~\ref{fig:advection_04}.
		As for~\eqref{eq:gradient_control}, these weak one-dimensional terms have combined indices of~$9.7\cdot10^{-3}$ each, fall below the threshold~$\varepsilon=1/73$, and are therefore truncated. As a result, we obtain an error~$e = 1.76\cdot 10^{-1}$.
		\begin{figure}[htb]
			\centering
			\input{figures/fair_comparison_advection_nonlinadv_gsis.tikz}
			\caption{Approximated global sensitivity indices for the \emph{diffusion equation with controlled nonlinear advection~\eqref{eq:nonlin_adv}}, for~$i=2$ and~\mbox{$D=\Delta t=\Delta \omega=1$}, where the relevant terms are~$\{1\}, \{2\}, \{3\}$ of~$F$ and~$\{1\}, \{3\}, \{1,2\}, \{2,3\}$ of~$G_1$.
				\label{fig:advection_04}}
		\end{figure}

		\bigskip
		Finally, Figure~\ref{fig:error_advection} shows the influence of superposition dimension~$q$ and number of samples~$M$ for the two controlled systems~\eqref{eq:gradient_control} and~\eqref{eq:nonlin_adv}.
		Both require~$q\geq2$, since both dynamics contain pair interactions of neighbouring grid points. Hence, for~$q=1$, the relevant pair interactions cannot be represented and the error stagnates at~order~$10^{-1}$.
		At~$q=2$, however, the error remains large, since the weak couplings of~$G$ discussed above fall below the sparsity threshold and are truncated. For larger~$q$, the threshold~$\varepsilon=1/|V_{\mathrm{cand}}|$ decreases as the number of candidate terms increases, such that the weak couplings are eventually retained and the error drops to the solver accuracy, for~\eqref{eq:nonlin_adv} at~$q=3$ and for~\eqref{eq:gradient_control} at~$q=6$. The same effect renders~$e$ independent of~$M$. It settles at~$\approx 9.1\cdot10^{-2}$ and~$\approx1.9\cdot10^{-1}$ for~\eqref{eq:gradient_control} and~\eqref{eq:nonlin_adv}, respectively, and does not change up to~$M=10^5$, since increasing the number of samples cannot compensate for a term that is not contained in the ansatz space.

		\begin{figure}[htb]
			\centering
			\mbox{

\begin{tikzpicture}[/tikz/background rectangle/.style={fill={rgb,1:red,1.0;green,1.0;blue,1.0}, fill opacity={1.0}, draw opacity={1.0}}, show background rectangle]
\begin{axis}[point meta max={nan}, point meta min={nan}, legend cell align={left}, legend columns={1}, legend style={color={rgb,1:red,0.0;green,0.0;blue,0.0}, draw opacity={1.0}, line width={1}, solid, fill={rgb,1:red,1.0;green,1.0;blue,1.0}, fill opacity={1.0}, text opacity={1.0}, font={{\fontsize{5 pt}{6 pt}\selectfont}}, text={rgb,1:red,0.3216;green,0.3176;blue,0.3059}, cells={anchor={center}}, at={(0.5, -0.62)}, anchor={north}}, axis background/.style={fill={rgb,1:red,1.0;green,1.0;blue,1.0}, opacity={1.0}}, anchor={north west}, xshift={0.4109mm}, yshift={-0.4044mm}, width={68.0000mm}, height={35.0000mm}, scaled x ticks={false}, xlabel={superposition dimension $q$}, x tick style={color={rgb,1:red,0.0;green,0.0;blue,0.0}, opacity={1.0}}, x tick label style={color={rgb,1:red,0.0;green,0.0;blue,0.0}, opacity={1.0}, rotate={0}}, xlabel style={at={(ticklabel cs:0.5)}, anchor=near ticklabel, at={{(ticklabel cs:0.5)}}, anchor={near ticklabel}, font={{\fontsize{8 pt}{10.4 pt}\selectfont}}, color={rgb,1:red,0.3216;green,0.3176;blue,0.3059}, draw opacity={1.0}, rotate={0.0}}, xmajorgrids={true}, xmin={0.8499999999999996}, xmax={6.15}, xticklabels={{$1$,$2$,$3$,$4$,$5$,$6$}}, xtick={{1.0,2.0,3.0,4.0,5.0,6.0}}, xtick align={inside}, xticklabel style={font={{\fontsize{7 pt}{9.1 pt}\selectfont}}, color={rgb,1:red,0.5373;green,0.5294;blue,0.5059}, draw opacity={1.0}, rotate={0.0}}, x grid style={color={rgb,1:red,0.8824;green,0.8784;blue,0.851}, draw opacity={0.1}, line width={1}, solid}, xticklabel pos={left}, x axis line style={color={rgb,1:red,0.0;green,0.0;blue,0.0}, draw opacity={1.0}, line width={1}, solid}, scaled y ticks={false}, ylabel={$L_2\ \mathrm{error}$}, y tick style={color={rgb,1:red,0.0;green,0.0;blue,0.0}, opacity={1.0}}, y tick label style={color={rgb,1:red,0.0;green,0.0;blue,0.0}, opacity={1.0}, rotate={0}}, ylabel style={at={(ticklabel cs:0.5)}, anchor=near ticklabel, at={{(ticklabel cs:0.5)}}, anchor={near ticklabel}, font={{\fontsize{8 pt}{10.4 pt}\selectfont}}, color={rgb,1:red,0.3216;green,0.3176;blue,0.3059}, draw opacity={1.0}, rotate={0.0}}, ymode={log}, log basis y={10}, ymajorgrids={true}, ymin={1.6233220790973577e-9}, ymax={0.7234250096361489}, yticklabels={{$10^{-6}$,$10^{-3}$}}, ytick={{1.0e-6,0.001}}, ytick align={inside}, yticklabel style={font={{\fontsize{7 pt}{9.1 pt}\selectfont}}, color={rgb,1:red,0.5373;green,0.5294;blue,0.5059}, draw opacity={1.0}, rotate={0.0}}, y grid style={color={rgb,1:red,0.8824;green,0.8784;blue,0.851}, draw opacity={0.1}, line width={1}, solid}, yticklabel pos={left}, y axis line style={color={rgb,1:red,0.0;green,0.0;blue,0.0}, draw opacity={1.0}, line width={1}, solid}, colorbar={false}]
    \addplot[color={rgb,1:red,0.8889;green,0.4356;blue,0.2781}, name path={4}, draw opacity={1.0}, line width={1}, solid, mark={*}, mark size={2.25 pt}, mark repeat={1}, mark options={color={rgb,1:red,0.0;green,0.0;blue,0.0}, draw opacity={1.0}, fill={rgb,1:red,0.8889;green,0.4356;blue,0.2781}, fill opacity={1.0}, line width={0.75}, rotate={0}, solid}]
        table[row sep={\\}]
        {
            \\
            1.0  0.4117283920567246  \\
            2.0  0.08879947751741454  \\
            3.0  0.08840036148123909  \\
            4.0  0.08892258286658448  \\
            5.0  0.08918309391637885  \\
            6.0  4.2438603022844475e-8  \\
        }
        ;
\end{axis}
\end{tikzpicture}\hspace{3ex}

\begin{tikzpicture}[/tikz/background rectangle/.style={fill={rgb,1:red,1.0;green,1.0;blue,1.0}, fill opacity={1.0}, draw opacity={1.0}}, show background rectangle]
\begin{axis}[point meta max={nan}, point meta min={nan}, legend cell align={left}, legend columns={1}, legend style={color={rgb,1:red,0.0;green,0.0;blue,0.0}, draw opacity={1.0}, line width={1}, solid, fill={rgb,1:red,1.0;green,1.0;blue,1.0}, fill opacity={1.0}, text opacity={1.0}, font={{\fontsize{5 pt}{6 pt}\selectfont}}, text={rgb,1:red,0.3216;green,0.3176;blue,0.3059}, cells={anchor={center}}, at={(0.5, -0.62)}, anchor={north}}, axis background/.style={fill={rgb,1:red,1.0;green,1.0;blue,1.0}, opacity={1.0}}, anchor={north west}, xshift={0.4109mm}, yshift={-0.4044mm}, width={68.0000mm}, height={35.0000mm}, scaled x ticks={false}, xlabel={number of samples $M$}, x tick style={color={rgb,1:red,0.0;green,0.0;blue,0.0}, opacity={1.0}}, x tick label style={color={rgb,1:red,0.0;green,0.0;blue,0.0}, opacity={1.0}, rotate={0}}, xlabel style={at={(ticklabel cs:0.5)}, anchor=near ticklabel, at={{(ticklabel cs:0.5)}}, anchor={near ticklabel}, font={{\fontsize{8 pt}{10.4 pt}\selectfont}}, color={rgb,1:red,0.3216;green,0.3176;blue,0.3059}, draw opacity={1.0}, rotate={0.0}}, xmode={log}, log basis x={10}, xmajorgrids={true}, xmin={15.49033804788709}, xmax={129112.74071728889}, xticklabels={{$10^{2}$,$10^{4}$}}, xtick={{100.0,10000.0}}, xtick align={inside}, xticklabel style={font={{\fontsize{7 pt}{9.1 pt}\selectfont}}, color={rgb,1:red,0.5373;green,0.5294;blue,0.5059}, draw opacity={1.0}, rotate={0.0}}, x grid style={color={rgb,1:red,0.8824;green,0.8784;blue,0.851}, draw opacity={0.1}, line width={1}, solid}, xticklabel pos={left}, x axis line style={color={rgb,1:red,0.0;green,0.0;blue,0.0}, draw opacity={1.0}, line width={1}, solid}, scaled y ticks={false}, ylabel={$L_2\ \mathrm{error}$}, y tick style={color={rgb,1:red,0.0;green,0.0;blue,0.0}, opacity={1.0}}, y tick label style={color={rgb,1:red,0.0;green,0.0;blue,0.0}, opacity={1.0}, rotate={0}}, ylabel style={at={(ticklabel cs:0.5)}, anchor=near ticklabel, at={{(ticklabel cs:0.5)}}, anchor={near ticklabel}, font={{\fontsize{8 pt}{10.4 pt}\selectfont}}, color={rgb,1:red,0.3216;green,0.3176;blue,0.3059}, draw opacity={1.0}, rotate={0.0}}, ymode={log}, log basis y={10}, ymajorgrids={true}, ymin={2.279409420896454e-9}, ymax={2.5383739428325707}, yticklabels={{$10^{-5}$,$10^{0}$}}, ytick={{1.0e-5,1.0}}, ytick align={inside}, yticklabel style={font={{\fontsize{7 pt}{9.1 pt}\selectfont}}, color={rgb,1:red,0.5373;green,0.5294;blue,0.5059}, draw opacity={1.0}, rotate={0.0}}, y grid style={color={rgb,1:red,0.8824;green,0.8784;blue,0.851}, draw opacity={0.1}, line width={1}, solid}, yticklabel pos={left}, y axis line style={color={rgb,1:red,0.0;green,0.0;blue,0.0}, draw opacity={1.0}, line width={1}, solid}, colorbar={false}]
    \addplot[color={rgb,1:red,0.8889;green,0.4356;blue,0.2781}, name path={4}, draw opacity={1.0}, line width={1}, solid, mark={*}, mark size={2.25 pt}, mark repeat={1}, mark options={color={rgb,1:red,0.0;green,0.0;blue,0.0}, draw opacity={1.0}, fill={rgb,1:red,0.8889;green,0.4356;blue,0.2781}, fill opacity={1.0}, line width={0.75}, rotate={0}, solid}]
        table[row sep={\\}]
        {
            \\
            20.0  1.407719025931415  \\
            40.0  1.2557480776878795  \\
            100.0  1.1529935261053688  \\
            200.0  0.2908147156650633  \\
            400.0  0.11471443389592935  \\
            1000.0  0.09124080258785476  \\
            2000.0  0.09168238173879519  \\
            4000.0  0.09266775372737017  \\
            10000.0  0.09221266073202748  \\
            20000.0  0.09207411177607043  \\
            40000.0  0.09152475292680559  \\
            100000.0  0.09132061768919285  \\
        }
        ;
\end{axis}
\end{tikzpicture}}\\[2ex]
			\mbox{

\begin{tikzpicture}[/tikz/background rectangle/.style={fill={rgb,1:red,1.0;green,1.0;blue,1.0}, fill opacity={1.0}, draw opacity={1.0}}, show background rectangle]
\begin{axis}[point meta max={nan}, point meta min={nan}, legend cell align={left}, legend columns={1}, legend style={color={rgb,1:red,0.0;green,0.0;blue,0.0}, draw opacity={1.0}, line width={1}, solid, fill={rgb,1:red,1.0;green,1.0;blue,1.0}, fill opacity={1.0}, text opacity={1.0}, font={{\fontsize{5 pt}{6 pt}\selectfont}}, text={rgb,1:red,0.3216;green,0.3176;blue,0.3059}, cells={anchor={center}}, at={(0.5, -0.62)}, anchor={north}}, axis background/.style={fill={rgb,1:red,1.0;green,1.0;blue,1.0}, opacity={1.0}}, anchor={north west}, xshift={0.4109mm}, yshift={-0.4044mm}, width={68.0000mm}, height={35.0000mm}, scaled x ticks={false}, xlabel={superposition dimension $q$}, x tick style={color={rgb,1:red,0.0;green,0.0;blue,0.0}, opacity={1.0}}, x tick label style={color={rgb,1:red,0.0;green,0.0;blue,0.0}, opacity={1.0}, rotate={0}}, xlabel style={at={(ticklabel cs:0.5)}, anchor=near ticklabel, at={{(ticklabel cs:0.5)}}, anchor={near ticklabel}, font={{\fontsize{8 pt}{10.4 pt}\selectfont}}, color={rgb,1:red,0.3216;green,0.3176;blue,0.3059}, draw opacity={1.0}, rotate={0.0}}, xmajorgrids={true}, xmin={0.8499999999999996}, xmax={6.15}, xticklabels={{$1$,$2$,$3$,$4$,$5$,$6$}}, xtick={{1.0,2.0,3.0,4.0,5.0,6.0}}, xtick align={inside}, xticklabel style={font={{\fontsize{7 pt}{9.1 pt}\selectfont}}, color={rgb,1:red,0.5373;green,0.5294;blue,0.5059}, draw opacity={1.0}, rotate={0.0}}, x grid style={color={rgb,1:red,0.8824;green,0.8784;blue,0.851}, draw opacity={0.1}, line width={1}, solid}, xticklabel pos={left}, x axis line style={color={rgb,1:red,0.0;green,0.0;blue,0.0}, draw opacity={1.0}, line width={1}, solid}, scaled y ticks={false}, ylabel={$L_2\ \mathrm{error}$}, y tick style={color={rgb,1:red,0.0;green,0.0;blue,0.0}, opacity={1.0}}, y tick label style={color={rgb,1:red,0.0;green,0.0;blue,0.0}, opacity={1.0}, rotate={0}}, ylabel style={at={(ticklabel cs:0.5)}, anchor=near ticklabel, at={{(ticklabel cs:0.5)}}, anchor={near ticklabel}, font={{\fontsize{8 pt}{10.4 pt}\selectfont}}, color={rgb,1:red,0.3216;green,0.3176;blue,0.3059}, draw opacity={1.0}, rotate={0.0}}, ymode={log}, log basis y={10}, ymajorgrids={true}, ymin={2.4056518153848166e-9}, ymax={0.43723789890047216}, yticklabels={{$10^{-6}$,$10^{-3}$}}, ytick={{1.0e-6,0.001}}, ytick align={inside}, yticklabel style={font={{\fontsize{7 pt}{9.1 pt}\selectfont}}, color={rgb,1:red,0.5373;green,0.5294;blue,0.5059}, draw opacity={1.0}, rotate={0.0}}, y grid style={color={rgb,1:red,0.8824;green,0.8784;blue,0.851}, draw opacity={0.1}, line width={1}, solid}, yticklabel pos={left}, y axis line style={color={rgb,1:red,0.0;green,0.0;blue,0.0}, draw opacity={1.0}, line width={1}, solid}, colorbar={false}]
    \addplot[color={rgb,1:red,0.8889;green,0.4356;blue,0.2781}, name path={8}, draw opacity={1.0}, line width={1}, solid, mark={*}, mark size={2.25 pt}, mark repeat={1}, mark options={color={rgb,1:red,0.0;green,0.0;blue,0.0}, draw opacity={1.0}, fill={rgb,1:red,0.8889;green,0.4356;blue,0.2781}, fill opacity={1.0}, line width={0.75}, rotate={0}, solid}]
        table[row sep={\\}]
        {
            \\
            1.0  0.2542898708935593  \\
            2.0  0.1774410911795262  \\
            3.0  1.4765561693513836e-8  \\
            4.0  1.4970030524072083e-8  \\
            5.0  2.0799300214384842e-8  \\
            6.0  2.574027644929882e-8  \\
        }
        ;
\end{axis}
\end{tikzpicture}\hspace{3ex}

\begin{tikzpicture}[/tikz/background rectangle/.style={fill={rgb,1:red,1.0;green,1.0;blue,1.0}, fill opacity={1.0}, draw opacity={1.0}}, show background rectangle]
\begin{axis}[point meta max={nan}, point meta min={nan}, legend cell align={left}, legend columns={1}, legend style={color={rgb,1:red,0.0;green,0.0;blue,0.0}, draw opacity={1.0}, line width={1}, solid, fill={rgb,1:red,1.0;green,1.0;blue,1.0}, fill opacity={1.0}, text opacity={1.0}, font={{\fontsize{5 pt}{6 pt}\selectfont}}, text={rgb,1:red,0.3216;green,0.3176;blue,0.3059}, cells={anchor={center}}, at={(0.5, -0.62)}, anchor={north}}, axis background/.style={fill={rgb,1:red,1.0;green,1.0;blue,1.0}, opacity={1.0}}, anchor={north west}, xshift={0.4109mm}, yshift={-0.4044mm}, width={68.0000mm}, height={35.0000mm}, scaled x ticks={false}, xlabel={number of samples $M$}, x tick style={color={rgb,1:red,0.0;green,0.0;blue,0.0}, opacity={1.0}}, x tick label style={color={rgb,1:red,0.0;green,0.0;blue,0.0}, opacity={1.0}, rotate={0}}, xlabel style={at={(ticklabel cs:0.5)}, anchor=near ticklabel, at={{(ticklabel cs:0.5)}}, anchor={near ticklabel}, font={{\fontsize{8 pt}{10.4 pt}\selectfont}}, color={rgb,1:red,0.3216;green,0.3176;blue,0.3059}, draw opacity={1.0}, rotate={0.0}}, xmode={log}, log basis x={10}, xmajorgrids={true}, xmin={15.49033804788709}, xmax={129112.74071728889}, xticklabels={{$10^{2}$,$10^{4}$}}, xtick={{100.0,10000.0}}, xtick align={inside}, xticklabel style={font={{\fontsize{7 pt}{9.1 pt}\selectfont}}, color={rgb,1:red,0.5373;green,0.5294;blue,0.5059}, draw opacity={1.0}, rotate={0.0}}, x grid style={color={rgb,1:red,0.8824;green,0.8784;blue,0.851}, draw opacity={0.1}, line width={1}, solid}, xticklabel pos={left}, x axis line style={color={rgb,1:red,0.0;green,0.0;blue,0.0}, draw opacity={1.0}, line width={1}, solid}, scaled y ticks={false}, ylabel={$L_2\ \mathrm{error}$}, y tick style={color={rgb,1:red,0.0;green,0.0;blue,0.0}, opacity={1.0}}, y tick label style={color={rgb,1:red,0.0;green,0.0;blue,0.0}, opacity={1.0}, rotate={0}}, ylabel style={at={(ticklabel cs:0.5)}, anchor=near ticklabel, at={{(ticklabel cs:0.5)}}, anchor={near ticklabel}, font={{\fontsize{8 pt}{10.4 pt}\selectfont}}, color={rgb,1:red,0.3216;green,0.3176;blue,0.3059}, draw opacity={1.0}, rotate={0.0}}, ymode={log}, log basis y={10}, ymajorgrids={true}, ymin={9.60109896046584e-10}, ymax={2.4147098127363966}, yticklabels={{$10^{-5}$,$10^{0}$}}, ytick={{1.0e-5,1.0}}, ytick align={inside}, yticklabel style={font={{\fontsize{7 pt}{9.1 pt}\selectfont}}, color={rgb,1:red,0.5373;green,0.5294;blue,0.5059}, draw opacity={1.0}, rotate={0.0}}, y grid style={color={rgb,1:red,0.8824;green,0.8784;blue,0.851}, draw opacity={0.1}, line width={1}, solid}, yticklabel pos={left}, y axis line style={color={rgb,1:red,0.0;green,0.0;blue,0.0}, draw opacity={1.0}, line width={1}, solid}, colorbar={false}]
    \addplot[color={rgb,1:red,0.8889;green,0.4356;blue,0.2781}, name path={8}, draw opacity={1.0}, line width={1}, solid, mark={*}, mark size={2.25 pt}, mark repeat={1}, mark options={color={rgb,1:red,0.0;green,0.0;blue,0.0}, draw opacity={1.0}, fill={rgb,1:red,0.8889;green,0.4356;blue,0.2781}, fill opacity={1.0}, line width={0.75}, rotate={0}, solid}]
        table[row sep={\\}]
        {
            \\
            20.0  1.2611726331417143  \\
            40.0  1.201976216357805  \\
            100.0  1.030429646586456  \\
            200.0  0.4244473378251633  \\
            400.0  0.132549852658581  \\
            1000.0  0.19593938153289775  \\
            2000.0  0.1872740745037041  \\
            4000.0  0.18881740109599932  \\
            10000.0  0.18641983062349285  \\
            20000.0  0.18611026831937796  \\
            40000.0  0.1863341071153947  \\
            100000.0  0.18593033447271373  \\
        }
        ;
\end{axis}
\end{tikzpicture}}
			\caption{Error~$e$ for the \emph{diffusion equation with the two control variants~\eqref{eq:gradient_control} and~\eqref{eq:nonlin_adv}} (top to bottom), for~$i=2$, depending on the superposition dimension~$q$ (left) and the number of samples~$M$ (right).
				\label{fig:error_advection}}
		\end{figure}

		\subsection{Shallow water equations\label{sec:shallow}}
		The 1D shallow water equations, see~\cite[Chapter 13]{LeVe02}, form a nonlinear system of conservation laws that describes the conservation of mass and momentum. They are given by
		\begin{align}\label{eq:SWE}
			\begin{split}
				\frac{\partial h}{\partial t} + \frac{\partial (hv)}{\partial \omega} &= 0 \qquad \text{(mass conservation)} \\
				\frac{\partial (hv)}{\partial t} + \frac{\partial}{\partial \omega}\left( hv^2 + \frac{1}{2} g h^2 \right) &= 0 \qquad \text{(momentum conservation)}.   
			\end{split}
		\end{align}
		Here, $g$ is the gravitational acceleration, while $h(\omega,t)$ and~$v(\omega,t)$  denote the fluid depth and the horizontal velocity, respectively, for~$(\omega,t)\in \Omega \times [0,\infty)$.
		Discretization of the spatial domain~$\Omega$, for instance using a Lax--Friedrichs scheme\footnote{More advanced finite-volume solvers, e.\,g., Godunov's method or Riemann solvers~\cite{LeVe02}, lead to a similar structure.} with an equidistant spatial grid consisting of the nodes~$\omega_1,\ldots,\omega_n$ with grid size~$\Delta \omega$, yields the explicit time stepping scheme
		\begin{equation}\label{eq:low_dim_PDE}
			\mathbf{Z}_i^+ = \frac{1}{2} \left( \mathbf{Z}_{i+1} + \mathbf{Z}_{i-1} \right) - \tfrac{\Delta t}{2\Delta \omega} \left( \mathbf{F}_{i+1} - \mathbf{F}_{i-1} \right),
		\end{equation}
		with temporal step size~$\Delta t$. The quantities~$\mathbf{Z}_i$ and~$\mathbf{F}_i$ approximate the conserved quantity and the nonlinearity at space point~$\omega_i$, $i\in \{1,\ldots,n\}$, respectively, i.\,e., 
		\begin{equation*}
			\mathbf{Z}_i = \begin{pmatrix}
				h(\omega_i) \\
				h(\omega_i) v(\omega_i)
			\end{pmatrix} \qquad\text{and}\qquad \mathbf{F}_i = \begin{pmatrix}
				h(\omega_i) v(\omega_i) \\
				h(\omega_i) v(\omega_i)^2 + \frac{1}{2} g h(\omega_i)^2
			\end{pmatrix}.
		\end{equation*}

		Rewriting this scheme in terms of the variables~$x = (h,q)$ with the conserved quantity~$q = hv$, and denoting its numerical approximation by~$X$, we obtain the discrete-time system
		\begin{align*}
			X_i^+ = \frac12( X_{i+1} + X_{i-1}) - \tfrac{\Delta t}{2\Delta \omega} \left(\mathbf{F}(X_{i+1}) - \mathbf{F}(X_{i-1})\right) = f_2(X_{i+1}) + f_1(X_{i-1})
		\end{align*}
		 for the spatial grid point~$i$, where
		\begin{align*}
			\mathbf{F}(x) = \mathbf{F}(h,q) = \begin{pmatrix}
				q\\
				q^2/h + \frac12 g h^2
			\end{pmatrix},\quad f_1(x) = \frac12 x + \frac{\Delta t}{2\Delta \omega}\mathbf{F}(x), \quad
			f_2(x) = \frac12 x - \frac{\Delta t}{2\Delta \omega}\mathbf{F}(x).
		\end{align*}
		At first sight, the two summands look like fully decoupled terms. However, $X$ is composed of the height~$h$ and the discharge~$q=hv$, and is thus two-dimensional. Applying our scalar-valued approximation approach, we obtain nonlinearly coupled terms, since, e.\,g., the second line reads
		\begin{align}\label{eq:Q}
			Q_i^+ = \frac12(Q_{i+1} + Q_{i-1}) - \frac{\Delta t}{2\Delta \omega}\left(Q_{i+1}^2/H_{i+1} + \frac12 g H_{i+1}^2 - Q_{i-1}^2/H_{i-1} - \frac12 g H_{i-1}^2\right).
		\end{align}
		Clearly, its right-hand side may be written in terms of~$z = (Q_{i+1},Q_{i-1},H_{i+1},H_{i-1})$ and thus involves (multiplicative) couplings of $(Q_{i+1},H_{i+1})$ and $(Q_{i-1},H_{i-1})$.

		\medskip\noindent\emph{Adding control.} An acceleration~$u$ (e.\,g., from a pump or turbine) enters the second equation through a term~$hu$. The factor~$h$ makes the momentum source stronger where the water is deeper. It also ensures dimensional consistency, since~$[h]=\mathrm{m}$ and~$[u]=\mathrm{m}/\mathrm{s}^2$ give~$[hu]=\mathrm{m}^2/\mathrm{s}^2$, which matches~$[\partial_t q]$ for the discharge per unit width~$q=hv$.
		In the discretization, this amounts to adding the term
		\[
			\Delta t\,H_{i} U_{i}
		\]
		to the right-hand side of~\eqref{eq:Q}.

		Concerning the sampling, note that the flux term~$Q^2/H$ is singular at~$H=0$. We therefore draw all eight components on~$[-1,1]$ as before. For the height components, we then use the shifted values
		\begin{align}
			\label{eq:shallow_shift}
			H_i \coloneqq X_i+2 \in [1,3], \qquad\text{where } i \text{ is a height index},
		\end{align}
		so that the water height stays bounded away from zero. The discharge components~$Q_i$ are used without shifting on~$[-1,1]$.

		In contrast to the previous examples, the flux term~$Q^2/H$ is not a polynomial, so the ANOVA approximation cannot reproduce the dynamics exactly.
		The error~\eqref{eq:mc_error} is~$e = 2.83\cdot 10^{-2}$. This is also reflected in~Figure~\ref{fig:error_shallow}, where the two panels show plateaus at approximately $2.8\cdot10^{-2}$ and~$3.0\cdot10^{-2}$, respectively. 
		\begin{figure}[htb]
			\centering
			\input{figures/fair_comparison_shallow_gsis.tikz}
			\caption{Approximated global sensitivity indices for the \emph{shallow water equations~\eqref{eq:Q}}, for~\mbox{$i=2$}, $X=(H_1,H_2,H_3,H_4,Q_1,Q_2,Q_3,Q_4)$, $\Delta t=g=1$, and~$\Delta \omega=2$, with the heights sampled from~$[1,3]$ (affinely transformed to~$[-1,1]$), where the dominant terms are~$\{1\}, \{3\}, \{5\}, \{7\}$ of~$F$ and~$\{2\}$ of~$G_1$.
				\label{fig:shallow}}
		\end{figure}

		The influence of~$q$ and~$M$ is displayed in Figure~\ref{fig:error_shallow}.
		In contrast to the previous examples, a non-vanishing error of about~$2.8\cdot10^{-2}$ remains for \emph{all} parameter choices. In the following, we identify its source among the three error contributions of Remark~\ref{Remark:lsqr_error}.
		It is not a superposition truncation error, since the flux term depends on two variables only, such that the truncation by~$q$ is exact for~$q\geq2$.
		It is not a projection error either. Expanding the row in the Chebyshev basis, the projection error at~$q=2$ decreases from~$2.8\cdot10^{-2}$ for~$N_2=2$ to~$2.8\cdot10^{-6}$ for~$\bN=(20,8)$, since~$1/H$ is analytic on~$[1,3]$. However, the measured error stays at~$2.8\cdot10^{-2}$ for all admissible bandwidths.
		Instead, the plateau stems from the truncation by the sensitivity threshold. The flux-induced pair couplings~$\{H_{i\pm1},Q_{i\pm1}\}$ carry sensitivity indices of~$8.5\cdot10^{-4}$ each, fall below~$\varepsilon=1/73$ and are hence discarded, cf.\ Figure~\ref{fig:shallow}.
		Removing these two terms from the exact expansion leaves an error of~$2.84\cdot10^{-2}$, which is independent of~$\bN$ by construction and matches the measured value.
		Hence, the error floor of this example is governed by the threshold~$\varepsilon$ and not by the bandwidths. This is a second instance of the effect already observed for~\eqref{eq:gradient_control} and~\eqref{eq:nonlin_adv}.

		\begin{figure}[htb]
			\centering
			\mbox{

\begin{tikzpicture}[/tikz/background rectangle/.style={fill={rgb,1:red,1.0;green,1.0;blue,1.0}, fill opacity={1.0}, draw opacity={1.0}}, show background rectangle]
\begin{axis}[point meta max={nan}, point meta min={nan}, legend cell align={left}, legend columns={1}, legend style={color={rgb,1:red,0.0;green,0.0;blue,0.0}, draw opacity={1.0}, line width={1}, solid, fill={rgb,1:red,1.0;green,1.0;blue,1.0}, fill opacity={1.0}, text opacity={1.0}, font={{\fontsize{5 pt}{6 pt}\selectfont}}, text={rgb,1:red,0.3216;green,0.3176;blue,0.3059}, cells={anchor={center}}, at={(0.5, -0.62)}, anchor={north}}, axis background/.style={fill={rgb,1:red,1.0;green,1.0;blue,1.0}, opacity={1.0}}, anchor={north west}, xshift={0.4109mm}, yshift={-0.4045mm}, width={68.0000mm}, height={35.0000mm}, scaled x ticks={false}, xlabel={superposition dimension $q$}, x tick style={color={rgb,1:red,0.0;green,0.0;blue,0.0}, opacity={1.0}}, x tick label style={color={rgb,1:red,0.0;green,0.0;blue,0.0}, opacity={1.0}, rotate={0}}, xlabel style={at={(ticklabel cs:0.5)}, anchor=near ticklabel, at={{(ticklabel cs:0.5)}}, anchor={near ticklabel}, font={{\fontsize{8 pt}{10.4 pt}\selectfont}}, color={rgb,1:red,0.3216;green,0.3176;blue,0.3059}, draw opacity={1.0}, rotate={0.0}}, xmajorgrids={true}, xmin={0.8499999999999996}, xmax={6.15}, xticklabels={{$1$,$2$,$3$,$4$,$5$,$6$}}, xtick={{1.0,2.0,3.0,4.0,5.0,6.0}}, xtick align={inside}, xticklabel style={font={{\fontsize{7 pt}{9.1 pt}\selectfont}}, color={rgb,1:red,0.5373;green,0.5294;blue,0.5059}, draw opacity={1.0}, rotate={0.0}}, x grid style={color={rgb,1:red,0.8824;green,0.8784;blue,0.851}, draw opacity={0.1}, line width={1}, solid}, xticklabel pos={left}, x axis line style={color={rgb,1:red,0.0;green,0.0;blue,0.0}, draw opacity={1.0}, line width={1}, solid}, scaled y ticks={false}, ylabel={$L_2\ \mathrm{error}$}, y tick style={color={rgb,1:red,0.0;green,0.0;blue,0.0}, opacity={1.0}}, y tick label style={color={rgb,1:red,0.0;green,0.0;blue,0.0}, opacity={1.0}, rotate={0}}, ylabel style={at={(ticklabel cs:0.5)}, anchor=near ticklabel, at={{(ticklabel cs:0.5)}}, anchor={near ticklabel}, font={{\fontsize{8 pt}{10.4 pt}\selectfont}}, color={rgb,1:red,0.3216;green,0.3176;blue,0.3059}, draw opacity={1.0}, rotate={0.0}}, ymode={log}, log basis y={10}, ymajorgrids={true}, ymin={0.02549008133967687}, ymax={0.7477609715320228}, yticklabels={{$10^{-1.5}$,$10^{-1.0}$,$10^{-0.5}$}}, ytick={{0.03162277660168379,0.1,0.31622776601683794}}, ytick align={inside}, yticklabel style={font={{\fontsize{7 pt}{9.1 pt}\selectfont}}, color={rgb,1:red,0.5373;green,0.5294;blue,0.5059}, draw opacity={1.0}, rotate={0.0}}, y grid style={color={rgb,1:red,0.8824;green,0.8784;blue,0.851}, draw opacity={0.1}, line width={1}, solid}, yticklabel pos={left}, y axis line style={color={rgb,1:red,0.0;green,0.0;blue,0.0}, draw opacity={1.0}, line width={1}, solid}, colorbar={false}]
    \addplot[color={rgb,1:red,0.8889;green,0.4356;blue,0.2781}, name path={26}, draw opacity={1.0}, line width={1}, solid, mark={*}, mark size={2.25 pt}, mark repeat={1}, mark options={color={rgb,1:red,0.0;green,0.0;blue,0.0}, draw opacity={1.0}, fill={rgb,1:red,0.8889;green,0.4356;blue,0.2781}, fill opacity={1.0}, line width={0.75}, rotate={0}, solid}]
        table[row sep={\\}]
        {
            \\
            1.0  0.6795678802542797  \\
            2.0  0.02822375380401462  \\
            3.0  0.028222501131408357  \\
            4.0  0.02820911181825524  \\
            5.0  0.028230822488713246  \\
            6.0  0.028047953031351394  \\
        }
        ;
\end{axis}
\end{tikzpicture}\hspace{3ex}

\begin{tikzpicture}[/tikz/background rectangle/.style={fill={rgb,1:red,1.0;green,1.0;blue,1.0}, fill opacity={1.0}, draw opacity={1.0}}, show background rectangle]
\begin{axis}[point meta max={nan}, point meta min={nan}, legend cell align={left}, legend columns={1}, legend style={color={rgb,1:red,0.0;green,0.0;blue,0.0}, draw opacity={1.0}, line width={1}, solid, fill={rgb,1:red,1.0;green,1.0;blue,1.0}, fill opacity={1.0}, text opacity={1.0}, font={{\fontsize{5 pt}{6 pt}\selectfont}}, text={rgb,1:red,0.3216;green,0.3176;blue,0.3059}, cells={anchor={center}}, at={(0.5, -0.62)}, anchor={north}}, axis background/.style={fill={rgb,1:red,1.0;green,1.0;blue,1.0}, opacity={1.0}}, anchor={north west}, xshift={0.4107mm}, yshift={-0.4043mm}, width={68.0000mm}, height={35.0000mm}, scaled x ticks={false}, xlabel={number of samples $M$}, x tick style={color={rgb,1:red,0.0;green,0.0;blue,0.0}, opacity={1.0}}, x tick label style={color={rgb,1:red,0.0;green,0.0;blue,0.0}, opacity={1.0}, rotate={0}}, xlabel style={at={(ticklabel cs:0.5)}, anchor=near ticklabel, at={{(ticklabel cs:0.5)}}, anchor={near ticklabel}, font={{\fontsize{8 pt}{10.4 pt}\selectfont}}, color={rgb,1:red,0.3216;green,0.3176;blue,0.3059}, draw opacity={1.0}, rotate={0.0}}, xmode={log}, log basis x={10}, xmajorgrids={true}, xmin={15.49033804788709}, xmax={129112.74071728889}, xticklabels={{$10^{2}$,$10^{4}$}}, xtick={{100.0,10000.0}}, xtick align={inside}, xticklabel style={font={{\fontsize{7 pt}{9.1 pt}\selectfont}}, color={rgb,1:red,0.5373;green,0.5294;blue,0.5059}, draw opacity={1.0}, rotate={0.0}}, x grid style={color={rgb,1:red,0.8824;green,0.8784;blue,0.851}, draw opacity={0.1}, line width={1}, solid}, xticklabel pos={left}, x axis line style={color={rgb,1:red,0.0;green,0.0;blue,0.0}, draw opacity={1.0}, line width={1}, solid}, scaled y ticks={false}, ylabel={$L_2\ \mathrm{error}$}, y tick style={color={rgb,1:red,0.0;green,0.0;blue,0.0}, opacity={1.0}}, y tick label style={color={rgb,1:red,0.0;green,0.0;blue,0.0}, opacity={1.0}, rotate={0}}, ylabel style={at={(ticklabel cs:0.5)}, anchor=near ticklabel, at={{(ticklabel cs:0.5)}}, anchor={near ticklabel}, font={{\fontsize{8 pt}{10.4 pt}\selectfont}}, color={rgb,1:red,0.3216;green,0.3176;blue,0.3059}, draw opacity={1.0}, rotate={0.0}}, ymode={log}, log basis y={10}, ymajorgrids={true}, ymin={0.025964770835867484}, ymax={1.806040427422512}, yticklabels={{$10^{-1}$,$10^{0}$}}, ytick={{0.1,1.0}}, ytick align={inside}, yticklabel style={font={{\fontsize{7 pt}{9.1 pt}\selectfont}}, color={rgb,1:red,0.5373;green,0.5294;blue,0.5059}, draw opacity={1.0}, rotate={0.0}}, y grid style={color={rgb,1:red,0.8824;green,0.8784;blue,0.851}, draw opacity={0.1}, line width={1}, solid}, yticklabel pos={left}, y axis line style={color={rgb,1:red,0.0;green,0.0;blue,0.0}, draw opacity={1.0}, line width={1}, solid}, colorbar={false}]
    \addplot[color={rgb,1:red,0.8889;green,0.4356;blue,0.2781}, name path={2}, draw opacity={1.0}, line width={1}, solid, mark={*}, mark size={2.25 pt}, mark repeat={1}, mark options={color={rgb,1:red,0.0;green,0.0;blue,0.0}, draw opacity={1.0}, fill={rgb,1:red,0.8889;green,0.4356;blue,0.2781}, fill opacity={1.0}, line width={0.75}, rotate={0}, solid}]
        table[row sep={\\}]
        {
            \\
            20.0  1.5609024957260307  \\
            40.0  1.601716632182643  \\
            100.0  1.441936321856274  \\
            200.0  0.09585196276637752  \\
            400.0  0.030639144771592252  \\
            1000.0  0.029623433723668536  \\
            2000.0  0.029497598709908538  \\
            4000.0  0.029555479559991476  \\
            10000.0  0.029549215263019176  \\
            20000.0  0.029656035086099666  \\
            40000.0  0.02960425858310866  \\
            100000.0  0.02955198510109972  \\
        }
        ;
\end{axis}
\end{tikzpicture}}
			\caption{Error~$e$ for the \emph{shallow water equations~\eqref{eq:Q}}, for~$i=2$,  depending on the superposition dimension~$q$ (left) and the number of samples~$M$ (right).
				\label{fig:error_shallow}}
		\end{figure}

		\subsection{Viscous Burgers equation}
		As a final example, we return to the controlled viscous Burgers equation of Example~\ref{ex:burgers}. Unlike the previous examples, it features a multivariate control with~$m=2$, namely the state-affine actuation~$\gamma(\omega)\,x\,U_1$ and the purely additive actuation~$b(\omega)\,U_2$.
		Recall from Example~\ref{ex:burgers} that the discretization yields the dynamics
		\begin{align}
			\label{eq:burgers_control}
			X_i^+ = X_i + \Delta t\left(D\,\frac{X_{i+1} - 2X_i + X_{i-1}}{\Delta \omega^2} - X_i\,\frac{X_{i+1}-X_{i-1}}{2\Delta \omega} + \gamma_i\, X_i\, U_1 + b_i\, U_2 \right)
		\end{align}
		for spatial point~$i$,
		i.\,e., the index sets to be identified are $V_{i,0}=\bigl\{\{i-1\},\{i\},\{i+1\},\{i-1,i\},\{i,i+1\}\bigr\}$, $V_{i,1}=\bigl\{\{i\}\bigr\}$, and~$V_{i,2}=\{\varnothing\}$.
		As before, we consider the row~$i=2$ with~$d=8$ and choose~$D=\Delta t=\Delta\omega=1$ as well as~$\gamma_i=b_i=1$.

		The results are shown in Figure~\ref{fig:burgers}.
		The relevant ANOVA terms of~$F$ and~$G_1$ are correctly identified.
		For~$G_2$, the situation is degenerate. Since~$G_2$ consists of the constant term only, all terms coupling~$u_2$ to the state have vanishing global sensitivity indices, which are of order~$10^{-18}$ in the computation, such that they are correctly discarded. As these indices sum to zero up to rounding errors, they cannot be renormalized within their group, and Figure~\ref{fig:burgers} shows them without renormalization. The constant actuation~$b_i\,U_2$ is captured by the ANOVA term in~$u_2$ alone, which does not belong to the displayed group.
		As expected, the reconstruction is accurate with~$e = 8.15\cdot 10^{-9}$.
		\begin{figure}[htb]
			\centering
			\resizebox{\textwidth}{!}{\input{figures/fair_comparison_burgers_gsis.tikz}}
			\caption{Approximated global sensitivity indices for the \emph{viscous Burgers equation~\eqref{eq:burgers_control}}, for~$i=2$ and~$D=\Delta t=\Delta \omega=\gamma_i=b_i=1$, where the relevant terms are~$\{1\}, \{2\}, \{3\}, \{1,2\}, \{2,3\}$ of~$F$~(left), $\{2\}$ of~$G_1$ (middle) and~$\varnothing$ of~$G_2$ (right).
				\label{fig:burgers}}
		\end{figure}

		Finally, Figure~\ref{fig:error_burgers} shows the same parameter study for this example with two controls.
		The behaviour matches the one of the academic test function. The truncation error vanishes for~$q\geq2$ and the estimation error drops below~$10^{-6}$ once~$M\gtrsim 300$, since the right-hand side of~\eqref{eq:burgers_control} is polynomial and hence contained in the ansatz space.
		The additional control component does not change the qualitative behaviour but only increases the number of coefficients to be determined.

		\begin{figure}[htb]
			\centering
			\mbox{

\begin{tikzpicture}[/tikz/background rectangle/.style={fill={rgb,1:red,1.0;green,1.0;blue,1.0}, fill opacity={1.0}, draw opacity={1.0}}, show background rectangle]
\begin{axis}[point meta max={nan}, point meta min={nan}, legend cell align={left}, legend columns={1}, legend style={color={rgb,1:red,0.0;green,0.0;blue,0.0}, draw opacity={1.0}, line width={1}, solid, fill={rgb,1:red,1.0;green,1.0;blue,1.0}, fill opacity={1.0}, text opacity={1.0}, font={{\fontsize{5 pt}{6 pt}\selectfont}}, text={rgb,1:red,0.3216;green,0.3176;blue,0.3059}, cells={anchor={center}}, at={(0.5, -0.62)}, anchor={north}}, axis background/.style={fill={rgb,1:red,1.0;green,1.0;blue,1.0}, opacity={1.0}}, anchor={north west}, xshift={0.4109mm}, yshift={-0.4044mm}, width={68.0000mm}, height={35.0000mm}, scaled x ticks={false}, xlabel={superposition dimension $q$}, x tick style={color={rgb,1:red,0.0;green,0.0;blue,0.0}, opacity={1.0}}, x tick label style={color={rgb,1:red,0.0;green,0.0;blue,0.0}, opacity={1.0}, rotate={0}}, xlabel style={at={(ticklabel cs:0.5)}, anchor=near ticklabel, at={{(ticklabel cs:0.5)}}, anchor={near ticklabel}, font={{\fontsize{8 pt}{10.4 pt}\selectfont}}, color={rgb,1:red,0.3216;green,0.3176;blue,0.3059}, draw opacity={1.0}, rotate={0.0}}, xmajorgrids={true}, xmin={0.8499999999999996}, xmax={6.15}, xticklabels={{$1$,$2$,$3$,$4$,$5$,$6$}}, xtick={{1.0,2.0,3.0,4.0,5.0,6.0}}, xtick align={inside}, xticklabel style={font={{\fontsize{7 pt}{9.1 pt}\selectfont}}, color={rgb,1:red,0.5373;green,0.5294;blue,0.5059}, draw opacity={1.0}, rotate={0.0}}, x grid style={color={rgb,1:red,0.8824;green,0.8784;blue,0.851}, draw opacity={0.1}, line width={1}, solid}, xticklabel pos={left}, x axis line style={color={rgb,1:red,0.0;green,0.0;blue,0.0}, draw opacity={1.0}, line width={1}, solid}, scaled y ticks={false}, ylabel={$L_2\ \mathrm{error}$}, y tick style={color={rgb,1:red,0.0;green,0.0;blue,0.0}, opacity={1.0}}, y tick label style={color={rgb,1:red,0.0;green,0.0;blue,0.0}, opacity={1.0}, rotate={0}}, ylabel style={at={(ticklabel cs:0.5)}, anchor=near ticklabel, at={{(ticklabel cs:0.5)}}, anchor={near ticklabel}, font={{\fontsize{8 pt}{10.4 pt}\selectfont}}, color={rgb,1:red,0.3216;green,0.3176;blue,0.3059}, draw opacity={1.0}, rotate={0.0}}, ymode={log}, log basis y={10}, ymajorgrids={true}, ymin={2.0043162385211816e-9}, ymax={0.5963903704799286}, yticklabels={{$10^{-6}$,$10^{-3}$}}, ytick={{1.0e-6,0.001}}, ytick align={inside}, yticklabel style={font={{\fontsize{7 pt}{9.1 pt}\selectfont}}, color={rgb,1:red,0.5373;green,0.5294;blue,0.5059}, draw opacity={1.0}, rotate={0.0}}, y grid style={color={rgb,1:red,0.8824;green,0.8784;blue,0.851}, draw opacity={0.1}, line width={1}, solid}, yticklabel pos={left}, y axis line style={color={rgb,1:red,0.0;green,0.0;blue,0.0}, draw opacity={1.0}, line width={1}, solid}, colorbar={false}]
    \addplot[color={rgb,1:red,0.8889;green,0.4356;blue,0.2781}, name path={24}, draw opacity={1.0}, line width={1}, solid, mark={*}, mark size={2.25 pt}, mark repeat={1}, mark options={color={rgb,1:red,0.0;green,0.0;blue,0.0}, draw opacity={1.0}, fill={rgb,1:red,0.8889;green,0.4356;blue,0.2781}, fill opacity={1.0}, line width={0.75}, rotate={0}, solid}]
        table[row sep={\\}]
        {
            \\
            1.0  0.3433307979131953  \\
            2.0  7.685294356030766e-9  \\
            3.0  3.4816419363368844e-9  \\
            4.0  4.910437173929684e-9  \\
            5.0  6.565382951744956e-9  \\
            6.0  6.392824709653343e-9  \\
        }
        ;
\end{axis}
\end{tikzpicture}\hspace{3ex}

\begin{tikzpicture}[/tikz/background rectangle/.style={fill={rgb,1:red,1.0;green,1.0;blue,1.0}, fill opacity={1.0}, draw opacity={1.0}}, show background rectangle]
\begin{axis}[point meta max={nan}, point meta min={nan}, legend cell align={left}, legend columns={1}, legend style={color={rgb,1:red,0.0;green,0.0;blue,0.0}, draw opacity={1.0}, line width={1}, solid, fill={rgb,1:red,1.0;green,1.0;blue,1.0}, fill opacity={1.0}, text opacity={1.0}, font={{\fontsize{5 pt}{6 pt}\selectfont}}, text={rgb,1:red,0.3216;green,0.3176;blue,0.3059}, cells={anchor={center}}, at={(0.5, -0.62)}, anchor={north}}, axis background/.style={fill={rgb,1:red,1.0;green,1.0;blue,1.0}, opacity={1.0}}, anchor={north west}, xshift={0.4107mm}, yshift={-0.4043mm}, width={68.0000mm}, height={35.0000mm}, scaled x ticks={false}, xlabel={number of samples $M$}, x tick style={color={rgb,1:red,0.0;green,0.0;blue,0.0}, opacity={1.0}}, x tick label style={color={rgb,1:red,0.0;green,0.0;blue,0.0}, opacity={1.0}, rotate={0}}, xlabel style={at={(ticklabel cs:0.5)}, anchor=near ticklabel, at={{(ticklabel cs:0.5)}}, anchor={near ticklabel}, font={{\fontsize{8 pt}{10.4 pt}\selectfont}}, color={rgb,1:red,0.3216;green,0.3176;blue,0.3059}, draw opacity={1.0}, rotate={0.0}}, xmode={log}, log basis x={10}, xmajorgrids={true}, xmin={23.235507071830632}, xmax={193669.11107593332}, xticklabels={{$10^{2}$,$10^{4}$}}, xtick={{100.0,10000.0}}, xtick align={inside}, xticklabel style={font={{\fontsize{7 pt}{9.1 pt}\selectfont}}, color={rgb,1:red,0.5373;green,0.5294;blue,0.5059}, draw opacity={1.0}, rotate={0.0}}, x grid style={color={rgb,1:red,0.8824;green,0.8784;blue,0.851}, draw opacity={0.1}, line width={1}, solid}, xticklabel pos={left}, x axis line style={color={rgb,1:red,0.0;green,0.0;blue,0.0}, draw opacity={1.0}, line width={1}, solid}, scaled y ticks={false}, ylabel={$L_2\ \mathrm{error}$}, y tick style={color={rgb,1:red,0.0;green,0.0;blue,0.0}, opacity={1.0}}, y tick label style={color={rgb,1:red,0.0;green,0.0;blue,0.0}, opacity={1.0}, rotate={0}}, ylabel style={at={(ticklabel cs:0.5)}, anchor=near ticklabel, at={{(ticklabel cs:0.5)}}, anchor={near ticklabel}, font={{\fontsize{8 pt}{10.4 pt}\selectfont}}, color={rgb,1:red,0.3216;green,0.3176;blue,0.3059}, draw opacity={1.0}, rotate={0.0}}, ymode={log}, log basis y={10}, ymajorgrids={true}, ymin={1.6732960397437243e-9}, ymax={2.9474240099020617}, yticklabels={{$10^{-5}$,$10^{0}$}}, ytick={{1.0e-5,1.0}}, ytick align={inside}, yticklabel style={font={{\fontsize{7 pt}{9.1 pt}\selectfont}}, color={rgb,1:red,0.5373;green,0.5294;blue,0.5059}, draw opacity={1.0}, rotate={0.0}}, y grid style={color={rgb,1:red,0.8824;green,0.8784;blue,0.851}, draw opacity={0.1}, line width={1}, solid}, yticklabel pos={left}, y axis line style={color={rgb,1:red,0.0;green,0.0;blue,0.0}, draw opacity={1.0}, line width={1}, solid}, colorbar={false}]
    \addplot[color={rgb,1:red,0.8889;green,0.4356;blue,0.2781}, name path={2}, draw opacity={1.0}, line width={1}, solid, mark={*}, mark size={2.25 pt}, mark repeat={1}, mark options={color={rgb,1:red,0.0;green,0.0;blue,0.0}, draw opacity={1.0}, fill={rgb,1:red,0.8889;green,0.4356;blue,0.2781}, fill opacity={1.0}, line width={0.75}, rotate={0}, solid}]
        table[row sep={\\}]
        {
            \\
            30.0  1.6134928887651476  \\
            60.0  1.4760068049964015  \\
            150.0  1.10456176157234  \\
            300.0  2.4192924972497293e-7  \\
            600.0  6.605380462583731e-8  \\
            1500.0  1.7382748532574523e-8  \\
            3000.0  9.401627599598611e-9  \\
            6000.0  3.5584477796657244e-8  \\
            15000.0  1.8747029549372303e-8  \\
            30000.0  9.187408726696916e-9  \\
            60000.0  1.3104768788485222e-8  \\
            150000.0  3.0566685217864454e-9  \\
        }
        ;
\end{axis}
\end{tikzpicture}}
			\caption{Error~$e$ for the \emph{viscous Burgers equation~\eqref{eq:burgers_control}} with~$i=2$ and~$m=2$ depending on the superposition dimension~$q$ (left) and the number of samples~$M$ (right).
				\label{fig:error_burgers}}
		\end{figure}
		\FloatBarrier

		\section{Conclusions\label{sec:conclusions}}

		We have extended the ANOVA approximation to control-affine systems~$\bx^+=F(\bx)+G(\bx)\bu$ whose components depend on only a few state variables, such that no effort is spent on interactions between control components, which are excluded a priori by the control-affine structure.
		To this end, we have encoded the control-affine structure into the approximation ansatz itself, which allows to work with given state-control samples.
		In Proposition~\ref{Prop:limit_projection}, we have characterized the limit for~$M\to\infty$ as an orthogonal projection onto the span of the dictionary consisting of the state basis functions and their products with the control components. For finitely many samples, we have established a Marcinkiewicz--Zygmund inequality for the resulting ansatz space in Proposition~\ref{Prop:MZ}. Based on it, we have derived a probabilistic $L_2$-error bound with explicit constants in Theorem~\ref{Thm:err_coupled_MZ} and Corollary~\ref{Cor:err_est_fullanova_m}, which also controls the errors of the drift and of the input map separately.
		Finally, we have illustrated the approach by means of an academic test function and discretizations of a nonlinear advection-diffusion equation, the shallow water equations, and the viscous Burgers equation. In all examples, the dominant couplings are identified, and for polynomial dynamics the error reaches the accuracy of the least squares solver, provided that the relevant terms are not discarded by the sparsity threshold.


		\section*{Acknowledgments}
		The authors would like to express their sincere gratitude to their colleague
		Laura Weidensager for several fruitful discussions.

		\section*{Declaration on the use of AI tools}
		During the preparation of this work, the authors used Claude (Anthropic, model Claude Fable 5.1 and Opus~5.5) for the language editing, the discussion of proof ideas, proof reading and the documentation of code. The authors have reviewed, verified and revised all AI-assisted content and take full responsibility for the content of this publication.

        \section*{Data Availability Statement}
           The code associated with this article is available at
           \begin{center}
               \url{https://gitlab.hrz.tu-chemnitz.de/manuels-at-tu-chemnitz.de/anova-for-control},
           \end{center}see~\cite{KiPoScWo26code}.

		\bibliographystyle{abbrv-doi}
		\bibliography{literature}

		\appendix

		\section{Implementation via preprocessed ANOVA index sets\label{sec:complete_func}}
		In this part, we show that the coupled approach may be realized entirely within the existing ANOVA approximation software~\cite{ANOVAapprox,pyANOVAapprox}, that is, without implementing the coupled least squares problem~\eqref{eq:opt_fullanova} from scratch.
		To this end, we choose the ANOVA index sets~\eqref{eq:indexset_Nv} and~\eqref{eq:indexset_V} for the joint function~$h(\bz)=H_j(\bx,\bu)$ a priori such that they enforce the structure~\eqref{eq:approx_H_rowwise} and~\eqref{eq:approx_H_rowwise_m}, respectively. At the same time, this avoids the computation of ANOVA terms that vanish due to the control-affine structure.

		Accordingly, set
		\begin{align*}
			h(\bz) \coloneqq H_j(\bx,\bu), \quad \bz \coloneqq (\bx,\bu) \in \R^{d+m}.
		\end{align*}
		The naive way of applying the machinery of Section~\ref{sec:ANOVA} to the function~$h$ is to consider its full ANOVA decomposition~\eqref{eq:anova}, i.\,e.,
		\begin{align*}
			h(\bz) = \sum_{\bv\subseteq [d+m]} h_\bv(\bz_\bv).
		\end{align*}
		However, this decomposition consists of~$2^{d+m}$ ANOVA terms instead of the~$2^d+2^dm$ terms that would be necessary for the theoretical scenario of approximating~$F$ and (the columns of)~$G$ separately.
		This is inefficient because all terms coupling two or more components of~$\bu$ vanish a priori by the control-affine structure of~\eqref{eq:system_H}. Hence, computing these couplings between the~$u_\ell$ is unnecessary. Moreover, noisy sensitivity indices could incorrectly identify such terms as relevant.
        Further, despite the terms containing the control variable have to be \emph{linear} in~$u_\ell$, a generic ANOVA term may depend on~$u_\ell$ in an arbitrary nonlinear fashion.

		In the following, we address these two issues by means of a suitable a priori choice of the index sets~\eqref{eq:indexset_Nv} and~\eqref{eq:indexset_V}.

		We first address the number of ANOVA terms.
		To this end, recall that in the standard ANOVA approximation, the first step to reduce the number of ANOVA terms is to introduce a superposition dimension~$q\in\N$ and to consider only the terms of lower order, i.\,e., all terms with~$|\bv|\leq q$, cf.~\eqref{eq:anova_superposition}.
		This default is chosen since usually no a priori information about the structure of the given function is available.
		Here, however, we already know that the system~\eqref{eq:system_H} depends linearly on the individual components of~$\bu$, i.\,e., that there are no couplings between the~$u_\ell$, $\ell=1,\dots,m$.
		Therefore, we can encode this structural knowledge at the very beginning by providing a suitable index set~$V$ of relevant ANOVA terms, cf.~\eqref{eq:indexset_V}, and computing an approximation of the form
		\begin{align*}
			h(\bz) = \sum_{\bk\in\I(V)} c_\bk(h) \varphi_\bk(\bz)
		\end{align*}
		with~$\I(V)$ in~\eqref{eq:indexset_N_truncated}.
		Thereby, we avoid computing terms that vanish a priori.
		More precisely, with this choice of the index set, we obtain the~$2^d$ terms purely in~$\bx$ and, for each~$u_\ell$, its~$2^d$ couplings with~$\bx$.
		In other words, we work with~$2^d+2^dm$ ANOVA terms, i.\,e., the same number as in the theoretical scenario of approximating~$F$ and~$G$ separately.

		In practice, we again introduce a superposition dimension~$q$ to further reduce the number of terms.
		If the terms with and without a control component are to admit the same couplings purely in~$\bx$, we have to start with~$q+1$, since couplings with~$u_\ell$ are one order higher. We then remove couplings purely in~$\bx$ with~$|\bv|=q+1$ to obtain an ANOVA decomposition of the form
		\begin{align*}
			h(\bz) \approx \sum_{\substack{\bv\subseteq[d]\\ |\bv|\leq q}} h_\bv(\bx_\bv) + \sum_{\ell=1}^{m}\ \sum_{\substack{\bv\subseteq[d]\\ |\bv|\leq q}} h_{\bv\cup\{d+\ell\}}(\bx_\bv,u_\ell),
		\end{align*}
		i.\,e., terms purely in~$\bx$ up to order~$q$ as well as terms of order at most~$q+1$ containing exactly one control component.
		The terms involving two or more control components vanish by the control-affine structure. Consequently, the above is an equality whenever all index sets in~\eqref{eq:ANOVA_struct_F,G} satisfy~$|\bv|\leq q$. The converse does not hold in general, since the~$f^j_\bv$ in~\eqref{eq:ANOVA_struct_F,G} form a structural rather than an ANOVA decomposition. Therefore, an index set with~$|\bv|>q$ may still yield vanishing ANOVA terms of order larger than~$q$.

		Note, however, that we have not yet achieved the desired structure~\eqref{eq:approx_H_rowwise}, since linearity in the~$u_\ell$ is still missing. For general basis functions and index sets~\eqref{eq:indexset_Nv}, the terms~$h_{\bv\cup\{d+\ell\}}(\bx_\bv,u_\ell)$ may depend on~$u_\ell$ in an arbitrary nonlinear fashion. For instance, already the univariate term~$h_{\{d+\ell\}}(u_\ell)$ is in general not proportional to~$u_\ell$.

		A closer look at the definition of the index sets~\eqref{eq:indexset_Nv} reveals that the ANOVA approximation allows to specify the number of coefficients and basis functions for each individual ANOVA term.
		More precisely, by choosing~$N^\bv_j=2$ for a certain set of indices~$\bv$, the index set~$\I_{\bN^\bv}$ contains only the value~$1$ for the corresponding component. Hence, we use only one coefficient~$c_1$ from~$\{c_\bk\}$ and one basis function~$\varphi_1$ from~$\{\varphi_\bk\}$.

		In addition, note that for the Chebyshev basis~\eqref{eq:basis_cheb} we have~$\{\varphi^{\mathrm{cheb}}_{k}\}_{k\in\N_0} = \{1,\sqrt{2}x,\dots\}$.
		Thus, combining the above restriction of the index sets with the structure of the Chebyshev basis, we reconstruct the component~$u_\ell$ exactly by setting~$N^\bv_j=2$ for all index sets~$\bv$ containing one of the~$u_\ell$. The normalization constant of~$\varphi_1$ is then absorbed into the corresponding coefficients.
		Note that this restriction only concerns the control variables. In the $\bx$-directions, we are free to choose any orthonormal basis as introduced in Section~\ref{sec:ANOVA}, and we may even employ several different bases depending on the structure of~$F$ and~$G$.

		In summary, the strategy described above realizes the same approximation~\eqref{eq:approx_H_rowwise} as in Section~\ref{sec:linearity}, provided that the same basis systems~$\varphi^1=\varphi^2$ with possibly different index sets~$\I(V_1)$ and~$\I(V_2)$ are used.
		The approach of Section~\ref{sec:linearity} is thus slightly more general, since it admits different basis systems for~$F$ and~$G$, whereas the present strategy has the advantage that it may be carried out entirely within the existing ANOVA software~\cite{ANOVAapprox, pyANOVAapprox}.

		\begin{algorithm}{ANOVA approximation for control via preprocessed index sets}
			\label{alg:preprocessed}
			\vspace{-1.5em}

			\begin{tabbing}
				\textbf{Input:} \quad \= $d,m\in\N$ \hspace{10.5em} \= dimension and control dimension \\
				\> $q\in\N$, $q\ll d$ \> superposition dimension \\
				\> $\{(\bx^i,\bu^i)\}_{i=1}^{M}\subset\Omega\times\U$, $M\in\N$ \> i.i.d.\ sample points drawn according to the density~$\omega\cdot\widetilde\omega$ \\
				\> $\left(H(\bx^i,\bu^i)\right)_{i=1}^{M}$ \> function evaluations at the sample points \\
				\> $\varepsilon\in(0,1)$ \> threshold parameter \\
				\> $r>0$ \> confidence parameter, cf.\ Theorem~\ref{Thm:ANOVA_cheb}
			\end{tabbing}
			\begin{enumerate}
				\item Set~$V=\{\bv\subseteq[d]\colon|\bv|\leq q\}\cup\{\bv\cup\{d+\ell\}\colon \bv\subseteq[d],\,|\bv|\leq q,\,\ell=1,\dots,m\}$.
				\item Choose the bandwidths~$\bN^\bv$ in~\eqref{eq:indexset_Nv} with~$N^\bv_j=2$ for the component corresponding to~$u_\ell$ in every~$\bv\ni d+\ell$, using the Chebyshev basis in the control variables such that~$\varphi_1(u_\ell)\propto u_\ell$, cf.\ Remark~\ref{Remark:control_measure}.
				\item For~$j=1,\dots,d$: perform steps 2--5 of Algorithm~\ref{alg:anova} for~$h(\bz)=H_j(\bx,\bu)$ with the index set~$\I(V)$ from steps 1--2, where the truncation preserves the structure of~$V$ (at most one control component per term, bandwidth~$2$ in the control directions).
			\end{enumerate}
			\textbf{Output:} coefficients, where the terms with~$\bv\subseteq[d]$ form~$\tilde F_j$ and the terms with~$d+\ell\in\bv$ form~$\tilde G_{j\ell}$

		\end{algorithm}

		We briefly comment on the second step, which is more restrictive than it may seem at first sight.
		\begin{remark}[The control measure fixed by the preprocessing]
			\label{Remark:control_measure}
			Using the Chebyshev basis in the control variables fixes the control measure, since the ANOVA approximation of Section~\ref{sec:ANOVA} requires the basis to be orthonormal with respect to the sampling density. Hence, $\U_\ell=[-1,1]$ is equipped with the Chebyshev density~$\omega_{d+\ell}=\omega^{\mathrm{cheb}}$, for which~$\varphi_1(u_\ell)=\sqrt2\,u_\ell$.
			Step~2 of Algorithm~\ref{alg:preprocessed} is therefore a \emph{specialization} of the setting of Section~\ref{sec:linearity}, where~$\widetilde\omega$ was arbitrary. It yields~$\eta_\ell=0$ and~$\nu_\ell=\frac12$, that is, the zero-mean situation in which the limit systems of Remarks~\ref{Remark:schur} and~\ref{Remark:anova_separate} become block diagonal.
			Note that a different control measure remains admissible. However, the control basis then has to be orthonormalized accordingly, since the ANOVA partition in the~\mbox{$\bu$-direction} requires an orthonormal system in~$L_2(\U_\ell,\omega_{d+\ell})$ whose first non-constant element is proportional to~$u_\ell$. In particular, the control measure has to have zero mean.
		\end{remark}

		We close this appendix by illustrating the preprocessing strategy of Algorithm~\ref{alg:preprocessed} with two examples of increasing complexity.

		\begin{example}
			As a first example, we consider again the Euler approximation of a controlled Duffing oscillator~\eqref{eq:Duffing} from Example~\ref{ex:Duffing}.
			Here, we have~$d=2$ and~$m=1$, i.\,e., the coupled variable is~$\bz=(x_1,x_2,u)^\top\in\R^3$ with~$z_1=x_1$, $z_2=x_2$, and~$z_3=u$.
			Since the control is univariate, no unnecessary couplings between control components can occur. We use this simple example to illustrate the proposed choice of the index sets and of the basis.
			Additionally, as the dimension~$d+m=3$ is already small, we do not introduce a superposition dimension~$q<3$, but keep all possible ANOVA terms, i.\,e.,
			$V=\{\varnothing,\{1\},\{2\},\{3\},\{1,2\},\{1,3\},\{2,3\},\{1,2,3\}\}$.
			Following the preprocessing strategy described above, the bandwidths~\eqref{eq:indexset_Nv} are then chosen as~$N^\bv_j=2$ for the component corresponding to the control variable~$z_3=u$ and arbitrary even numbers otherwise, i.\,e.,
			with arbitrary~$a_n \in 2\N$, $n=1,\dots,8$, we set
			\begin{align*}
				\bN^{\{1\}} = a_1, \bN^{\{2\}} = a_2, \bN^{\{3\}} = 2,
				\bN^{\{1,2\}} = \begin{pmatrix} a_3 \\ a_4 \end{pmatrix},
				\bN^{\{1,3\}} = \begin{pmatrix} a_5 \\ 2 \end{pmatrix},
				\bN^{\{2,3\}} = \begin{pmatrix} a_6 \\ 2 \end{pmatrix},
				\bN^{\{1,2,3\}} = \begin{pmatrix} a_7 \\ a_8 \\ 2 \end{pmatrix}.
			\end{align*}
			The corresponding index sets~\eqref{eq:indexset_Nv} then read
			\begin{align*}
				\I_{\bN^{\{1\}}} &= \left\{ \bk=(k_1,0,0)^\top \colon k_1\in\left\{1,\dots,a_1-1\right\} \right\}, \\
				\I_{\bN^{\{2\}}} &= \left\{ \bk=(0,k_2,0)^\top \colon k_2\in\left\{1,\dots,a_2-1\right\} \right\}, \\
				\I_{\bN^{\{3\}}} &= \left\{ \bk=(0,0,k_3)^\top \colon k_3\in\left\{1\right\} \right\} = \left\{ (0,0,1)^\top \right\}, \\
				\I_{\bN^{\{1,2\}}} &= \left\{ \bk=(k_1,k_2,0)^\top \colon k_1\in\left\{1,\dots,a_3-1\right\}, k_2\in\left\{1,\dots,a_4-1\right\} \right\}, \\
				\I_{\bN^{\{1,3\}}} &= \left\{ \bk=(k_1,0,1)^\top \colon k_1\in\left\{1,\dots,a_5-1\right\} \right\}, \\
				\I_{\bN^{\{2,3\}}} &= \left\{ \bk=(0,k_2,1)^\top \colon k_2\in\left\{1,\dots,a_6-1\right\} \right\}, \\
				\I_{\bN^{\{1,2,3\}}} &= \left\{ \bk=(k_1,k_2,1)^\top \colon k_1\in\left\{1,\dots,a_7-1\right\}, k_2\in\left\{1,\dots,a_8-1\right\} \right\}.
			\end{align*}
			Denoting the resulting approximation of the component~$h(\bz)=H_j(\bx,u)$ by~$K$ and using the Chebyshev basis in the control variable, such that~$\varphi_1(u)\propto u$ (where the normalization constant is absorbed into the coefficients), the ansatz takes the form
			\begin{align*}
				K(\bz) &= c_{\boldsymbol 0}(K) + \sum_{\bk\in\I_{\bN^{\{1\}}}} c_\bk(K) \,\varphi_{k_1}(x_1) + \sum_{\bk\in\I_{\bN^{\{2\}}}} c_\bk(K) \,\varphi_{k_2}(x_2) + \sum_{\bk\in\I_{\bN^{\{1,2\}}}} c_\bk(K) \,\varphi_{k_1}(x_1) \,\varphi_{k_2}(x_2) \\
				&+ c_{(0,0,1)^\top}(K) \cdot u + \sum_{\bk\in\I_{\bN^{\{1,3\}}}} c_\bk(K) \,\varphi_{k_1}(x_1) \cdot u + \sum_{\bk\in\I_{\bN^{\{2,3\}}}} c_\bk(K) \,\varphi_{k_2}(x_2) \cdot u \\
				&+ \sum_{\bk\in\I_{\bN^{\{1,2,3\}}}} c_\bk(K) \,\varphi_{k_1}(x_1) \,\varphi_{k_2}(x_2) \cdot u.
			\end{align*}
			Defining the sets~$V_1\coloneqq\{\varnothing,\{1\},\{2\},\{1,2\}\}$ and~$V_2\coloneqq\{\{3\},\{1,3\},\{2,3\},\{1,2,3\}\}$, this can be written as
			\begin{align*}
				K(\bz) &= \sum_{\bk\in\I(V_1)} c_\bk(K) \,\varphi_\bk(\bx) + \sum_{\bk\in\I(V_2)} c_\bk(K) \,\varphi_\bk(\bx) \cdot u,
			\end{align*}
			which is the desired structure~\eqref{eq:approx_H_rowwise}.
		\end{example}

		The following example is more elaborate and shows that the preprocessing indeed reduces the number of ANOVA terms.
		\begin{example}
			As a second example, we consider a setting in which the naive application of the ANOVA decomposition would indeed introduce unnecessary couplings between the components of~$\bu$.
			While the previous example primarily illustrated how the choice of the index sets~$\bN^\bv$ in~\eqref{eq:indexset_Nv} affects the number of basis functions, this example illustrates how to choose the set~$V$ to avoid redundant ANOVA terms.
			To this end, we choose~$d=3$ and~$m=2$, i.\,e., we have~$\bz=(x_1,x_2,x_3,u_1,u_2)^\top$.

			Since the power set of~$[d+m]$ contains~$2^5=32$ elements, we admit only ANOVA terms of order at most~$2$ in~$\bx$.
			Thus, in the terminology of Algorithm~\ref{alg:preprocessed} we choose~$q=2$, which means that we include terms of total order at most~$q+1=3$ in~$\bz$. We then manually remove all couplings with more than two components of~$\bx$ as well as couplings with more than one component of~$\bu$, i.\,e., we utilize
			\begin{align*}
				V = \{&\varnothing,\{1\},\{2\},\{3\},\{4\},\{5\},\{1,2\},\{1,3\},\{2,3\},\{1,4\},\{2,4\},\{3,4\}, \\
				&\{1,5\},\{2,5\},\{3,5\}, \{1,2,4\},\{1,3,4\},\{2,3,4\},\{1,2,5\},\{1,3,5\},\{2,3,5\}\}.
			\end{align*}
			Thereby, we reduce the number of ANOVA terms from~$32$ to~$21$. Moreover, we choose the index sets~$\bN^\bv$ in~\eqref{eq:indexset_Nv} accordingly, that is, we restrict the number of basis functions for all sets containing a component of~$\bu$ by setting
			\begin{align*}
				2 &=
				\bN^{\{4\}} = \bN^{\{5\}} \\
				&= 
				N^{\{1,4\}}_2 = N^{\{2,4\}}_2 = N^{\{3,4\}}_2 = N^{\{1,5\}}_2 = N^{\{2,5\}}_2 = N^{\{3,5\}}_2 \\
				&= 
				N^{\{1,2,4\}}_3 = N^{\{1,3,4\}}_3 = N^{\{2,3,4\}}_3 = N^{\{1,2,5\}}_3 = N^{\{1,3,5\}}_3 = N^{\{2,3,5\}}_3.
			\end{align*}
			In analogy to the previous example, the resulting ansatz then reads
			\begin{align*}
				K(\bz)
				&=
				\left( c_{\boldsymbol 0}(K) + \sum_{\bk\in\I_{\bN^{\{1\}}}} c_\bk(K) \,\varphi_{k_1}(x_1) + \sum_{\bk\in\I_{\bN^{\{2\}}}} c_\bk(K) \,\varphi_{k_2}(x_2) + \sum_{\bk\in\I_{\bN^{\{3\}}}} c_\bk(K) \,\varphi_{k_3}(x_3) \right. \\
				&\qquad+ \sum_{\bk\in\I_{\bN^{\{1,2\}}}} c_\bk(K) \,\varphi_{k_1}(x_1) \,\varphi_{k_2}(x_2) + \sum_{\bk\in\I_{\bN^{\{1,3\}}}} c_\bk(K) \,\varphi_{k_1}(x_1) \,\varphi_{k_3}(x_3) \\
				&\qquad+ \left. \sum_{\bk\in\I_{\bN^{\{2,3\}}}} c_\bk(K) \,\varphi_{k_2}(x_2) \,\varphi_{k_3}(x_3) \right) \\
				&+ \left( c_{(0,0,0,1,0)^\top}(K) \cdot u_1 + \sum_{\bk\in\I_{\bN^{\{1,4\}}}} c_\bk(K) \,\varphi_{k_1}(x_1) \cdot u_1 + \sum_{\bk\in\I_{\bN^{\{2,4\}}}} c_\bk(K) \,\varphi_{k_2}(x_2) \cdot u_1 \right. \\
				&\qquad+ \sum_{\bk\in\I_{\bN^{\{3,4\}}}} c_\bk(K) \,\varphi_{k_3}(x_3) \cdot u_1 + \sum_{\bk\in\I_{\bN^{\{1,2,4\}}}} c_\bk(K) \,\varphi_{k_1}(x_1) \,\varphi_{k_2}(x_2) \cdot u_1 \\
				&\qquad+ \left. \sum_{\bk\in\I_{\bN^{\{1,3,4\}}}} c_\bk(K) \,\varphi_{k_1}(x_1) \,\varphi_{k_3}(x_3) \cdot u_1 + \sum_{\bk\in\I_{\bN^{\{2,3,4\}}}} c_\bk(K) \,\varphi_{k_2}(x_2) \,\varphi_{k_3}(x_3) \cdot u_1 \right) \\
				&+ \left( c_{(0,0,0,0,1)^\top}(K) \cdot u_2 + \sum_{\bk\in\I_{\bN^{\{1,5\}}}} c_\bk(K) \,\varphi_{k_1}(x_1) \cdot u_2 + \sum_{\bk\in\I_{\bN^{\{2,5\}}}} c_\bk(K) \,\varphi_{k_2}(x_2) \cdot u_2 \right. \\
				&\qquad+ \sum_{\bk\in\I_{\bN^{\{3,5\}}}} c_\bk(K) \,\varphi_{k_3}(x_3) \cdot u_2 + \sum_{\bk\in\I_{\bN^{\{1,2,5\}}}} c_\bk(K) \,\varphi_{k_1}(x_1) \,\varphi_{k_2}(x_2) \cdot u_2 \\
				&\qquad+ \left. \sum_{\bk\in\I_{\bN^{\{1,3,5\}}}} c_\bk(K) \,\varphi_{k_1}(x_1) \,\varphi_{k_3}(x_3) \cdot u_2 + \sum_{\bk\in\I_{\bN^{\{2,3,5\}}}} c_\bk(K) \,\varphi_{k_2}(x_2) \,\varphi_{k_3}(x_3) \cdot u_2 \right).
			\end{align*}
			Again, by defining the sets
			\begin{align*}
				V_1 &\coloneqq \{\varnothing,\{1\},\{2\},\{3\},\{1,2\},\{1,3\},\{2,3\}\}, \\
				V_2 &\coloneqq \{\{4\},\{1,4\},\{2,4\},\{3,4\},\{1,2,4\},\{1,3,4\},\{2,3,4\}\}, \\
				V_3 &\coloneqq \{\{5\},\{1,5\},\{2,5\},\{3,5\},\{1,2,5\},\{1,3,5\},\{2,3,5\}\},
			\end{align*}
			this can be written as
			\begin{align*}
				K(\bz) &= \sum_{\bk\in\I(V_1)} c_\bk(K) \,\varphi_\bk(\bx) + \sum_{\bk\in\I(V_2)} c_\bk(K) \,\varphi_\bk(\bx) \cdot u_1 + \sum_{\bk\in\I(V_3)} c_\bk(K) \,\varphi_\bk(\bx) \cdot u_2,
			\end{align*}
			i.\,e., we obtain the multivariate structure~\eqref{eq:approx_H_rowwise_m} without ever computing couplings between~$u_1$ and~$u_2$.
		\end{example}

	\end{document}